\tikzset{shorten <>/.style={shorten >=#1,shorten <=#1}}
\DeclareMathAlphabet\mathbfcal{OMS}{cmsy}{b}{n}
\definecolor{darkgreen}{rgb}{0,0.30,0} 
\definecolor{darkred}{rgb}{0.75,0,0}
\definecolor{darkblue}{rgb}{0,0,0.6} 
\renewcommand*{\backref}[1]{}
\renewcommand*{\backrefalt}[4]{({%
    \ifcase #1 Not cited.%
          \or On p.~#2%
          \else On pp.~#2%
    \fi%
    })}
\def\makeautorefname#1#2{\expandafter\def\csname#1autorefname\endcsname{#2}}
\newtheorem{thm}{Theorem}[section]
\newtheorem{cor}{Corollary}[section]
\newtheorem{prop}{Proposition}[section]
\newtheorem{lem}{Lemma}[section]
\theoremstyle{definition}
\newtheorem{defn}{Definition}[section]
\newtheorem{df}{Definition}[section]
\newtheorem{con}{Construction}[section]
\newtheorem{exmp}{Example}[section]
\newtheorem{rem}{Remark}[section]
\newtheorem*{mainthm}{Main Theorem}
\newtheorem*{introconj}{Conjecture}
\let\c@cor=\c@thm
\let\c@prop=\c@thm
\let\c@lem=\c@thm
\let\c@conj=\c@thm
\let\c@defn=\c@thm
\let\c@df=\c@thm
\let\c@exmp=\c@thm
\let\c@rem=\c@thm
\let\c@sch=\c@thm
\let\c@con=\c@thm
\let\c@conj=\c@thm
\let\c@notn=\c@thm
\let\c@equation\c@thm
\newcommand{\R}{\mathbb R}
\newcommand{\Z}{\mathbb Z}
\newcommand{\cat}[1]{\textup{\textbf{{#1}}}}
\newcommand{\Map}{\textup{Map}}
\newcommand{\id}{\textup{id}}
\newcommand{\colim}{\textup{colim}\,}
\newcommand{\hocolim}{\textup{hocolim}\,}
\newcommand{\ra}{\longrightarrow}
\newcommand{\sma}{\wedge}
\newcommand{\simar}{\overset\sim\longrightarrow}
\newcommand{\congar}{\overset\cong\longrightarrow}
\newcommand{\mc}{\mathcal}
\newcommand{\Mod}{\textup{Mod}}
\newcommand{\Wald}{\mathrm{Wald}}
\newcommand{\Seg}{\mathrm{Seg}}
\newcommand{\SMC}{\mathrm{SymMon}}
\newcommand{\cCMC}{\mathrm{coCartMon}}
\newcommand{\Sp}{\mathrm{Sp}}
\newcommand{\sM}{\mathcal{M}}
\newcommand{\sC}{\mathcal{C}}
\newcommand{\sR}{\mathcal{R}}
\newcommand{\sO}{\mathcal{O}}
\newcommand{\sF}{\mathcal{F}}
\newcommand{\GB}{ \mathcal{B}_G}
\newcommand{\GE}{\mathcal{GE}}
\newcommand{\bA}{\textbf{A}}
\newcommand{\sE}{\mathcal{E}}
\newcommand{\tG}{\mathcal{E}G}
\newcommand{\Rd}{R^{\delta}}
\newcommand{\Rhd}{R^{h\delta}}
\newcommand{\Rhf}{R^{hf}}
\newcommand{\po}{\ar@{}[dr]|{\text{\pigpenfont R}}}
\newcommand{\pb}{\ar@{}[dr]|{\text{\pigpenfont J}}}
\DeclareMathOperator{\Cat}{\mathrm{Cat}}
\newcommand{\sbt}{\,\begin{picture}(-1,1)(0.5,-1)\circle*{1.8}\end{picture}\hspace{.05cm}}
\newlength{\storeparskip}
\title[The equivariant parametrized $h$-cobordism theorem]{The equivariant parametrized $h$-cobordism theorem,  The non-manifold part}
\author{Cary Malkiewich}
\address{Department of Mathematics, Binghamton University}
\email{malkiewich@math.binghamton.edu}
\author{Mona Merling}
\address{Department of Mathematics, The University of Pennsylvania}
\email{mmerling@math.upenn.edu}
\subjclass[2020]{19D10, 57R80, 57R85, 55P91, 57R91, 55P42, 55P92, 55N91, 19M05}
\thanks{C. Malkiewich was partly supported by NSF grant DMS-2005524. M. Merling was partly supported by NSF grants DMS-1709461 and NSF CAREER grant DMS-1943925.}
\begin{document}

\begin{abstract}

We construct a map from the suspension $G$-spectrum  $\Sigma_G^\infty M$ of a smooth compact $G$-manifold to the equivariant $A$-theory spectrum $\bA_G(M)$, and we show that its fiber is, on fixed points, a wedge of stable $h$-cobordism spectra. This map is constructed as a map of spectral Mackey functors, which is compatible with tom Dieck style splitting formulas on fixed points. In order to synthesize different  definitions of the suspension $G$-spectrum as a spectral Mackey functor,  we present a new perspective on spectral Mackey functors, viewing them as multifunctors on indexing categories for ``rings on many objects" and modules over such. This perspective should be of independent interest.


\end{abstract}

\maketitle


\begingroup%
\setlength{\parskip}{\storeparskip}
\tableofcontents
\endgroup%

\section{Introduction}

Algebraic $K$-theory of spaces was introduced by Waldhausen in the 1970s in \cite{waldhausen1978alg}  to formulate a parametrized version of the classical $h$-cobordism theorem. For a closed smooth manifold $M$, the classical $h$-cobordism and $s$-cobordism theorems identify the connected components of the stable $h$-cobordism space $\mathcal{H}^\infty(M)$, using the  algebraic $K$-group $K_1(\Z[\pi_1 M])$. By contrast, the stable parametrized $h$-cobordism theorem relates the entire homotopy type of the space $\mathcal{H}^\infty(M)$ to a space arising from algebraic $K$-theory.

While the  proof of the stable parametrized $h$-cobordism theorem and the completion of Waldhausen's program was only accomplished 30 years later by Waldhausen, Jahren and Rognes in \cite{waldhausennew},  $A$-theory has firmly established itself since its introduction as an essential tool in the study of high-dimensional manifolds. 
It encodes the correct higher version of Whitehead torsion, codifying the difference between the block diffeomorphism group constructed by surgery theory and the actual diffeomorphism group, not just on $\pi_0$ but on the higher homotopy groups as well \cite{ww}. Moreover, if the manifolds are allowed to have boundary and are stabilized with respect to arbitrary vector bundles, then $A$-theory (or more precisely the difference between $\Sigma^\infty_+ X$ and $\bA(X)$) captures the entire difference between a compact smooth manifold and its underlying homotopy type \cite{waldhausen_manifolds}; see for instance the main result of \cite{dww}.

When a manifold $M$ has a group action by a finite group $G$, we have constructed an equivariant $A$-theory spectrum $\bA_G(M)$ with the property that on $H$-fixed points for every $H$, it satisfies a tom Dieck formula
$$\bA_G(M)^H\simeq  \prod_{(K) \leq H} \bA (M^K_{hWK}).$$ Here $WK$ is the Weyl group $N_HK/K$, and $(H) \leq G$ denotes conjugacy classes of subgroups. On $\pi_1$, it therefore contains as a summand the classical $H$-equivariant Whitehead group. It has been known since the 90s that this fixed point decomposition is necessary for any definition of an equivariant $A$-theory spectrum that hopes to be relevant to equivariant $h$-cobordisms and pseudoisotopies. For a different application, a generalization of the Segal conjecture, Rognes has laid some of the groundwork for this construction in unpublished work from that time.

In \cite{CaryMona}, we have used the newly developed technology of spectral Mackey functors  \cite{ GM1, BO, BO, Gmonster, Gmonster2} to construct the $G$-spectrum $\bA_G(M)$ with the desired fixed points for all subgroups. There is a na{\"i}ve guess as the first definition of equivariant $A$-theory, corresponding to the equivariant $K$-theory in \cite{mona_thesis} and the $K$-theory of group actions in \cite{Gmonster2}. In \cite{CaryMona} we call it $\bA_G^\textup{coarse}(X)$ in  because on fixed points it only sees ``coarse" $G$-equivalences, namely $G$-maps that are nonequivariant weak homotopy equivalences. On fixed points it recovers the bivariant $A$-theory of the fibration $EG \times_H X \ra BH$ as defined by Williams, see \cite{bruce, malkiewich_merling_coassembly}. However, $\bA_G^\textup{coarse}(X)$ does not match our expected input for the $h$-cobordism theorem, and this paper will focus solely on $\bA_G(X)$ for a $G$-space $X$.

The expected connection to equivariant $h$-cobordisms can be made more precise in the following conjecture, which is analogous to the non-equivariant result, and which is inspired by Goodwillie's vision for equivariant $A$-theory.

\begin{introconj}
The $G$-spectrum  $\bA_G(X)$ defined in \cite{CaryMona} satisfies a splitting \begin{equation*}\bA_G(X)\simeq \Sigma^\infty_G X \times \textbf{Wh}_G(X),\end{equation*} where $\Omega  \textbf{Wh}_G(X)$ is a $G$-spectrum whose  zeroth space is the space  of equivariant  $h$-cobordisms $\mathcal{H}_G^\infty(X)$, stabilized with respect to representation disks $D(V)$.
\end{introconj}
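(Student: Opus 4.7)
The plan is to mirror the non-equivariant template of Waldhausen, Jahren, and Rognes, using spectral Mackey functor technology to assemble fixed-point information into a splitting of $G$-spectra. My three main steps would be: (1) construct an equivariant unit map $\Sigma^\infty_G X \to \bA_G(X)$ as a map of spectral Mackey functors, using the multifunctor-on-indexing-categories perspective announced in the abstract; (2) show that this map admits a retraction, so that $\Sigma^\infty_G X$ splits off as a direct summand; and (3) identify the complementary summand $\textbf{Wh}_G(X)$ with a $G$-spectrum whose loop space is, at the zeroth level, the stabilized equivariant $h$-cobordism space $\mc{H}_G^\infty(X)$.

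For steps (1) and (2), I would work entirely at the level of spectral Mackey functors. On $H$-fixed points, both sides split tom Dieck-style: the target by the formula $\bA_G(X)^H \simeq \prod_{(K) \leq H} \bA(X^K_{hWK})$ recorded above, and the source by the classical equivariant stable splitting $(\Sigma^\infty_G X)^H \simeq \bigvee_{(K) \leq H} \Sigma^\infty X^K_{hWK}$. If the unit map is constructed compatibly with these splittings, then on fixed points it decomposes as a product of the non-equivariant unit maps $\Sigma^\infty X^K_{hWK} \to \bA(X^K_{hWK})$, each of which is already known to split. The coherent assembly of these factor-wise splittings into a splitting of spectral Mackey functors is precisely the kind of bookkeeping that the new multifunctor perspective of the paper is designed to handle, since the map lives in the Mackey functor category by construction rather than only after passing to fixed points.

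The main obstacle is step (3): identifying the resulting cofiber $\textbf{Wh}_G(X)$ with a $G$-spectrum of honest equivariant $h$-cobordisms. Non-equivariantly, this identification is exactly the content of the stable parametrized $h$-cobordism theorem of \cite{waldhausennew}, whose proof required three decades and heavy input from smoothing theory and pseudoisotopy. Equivariantly, one must first construct $\mc{H}_G^\infty(X)$ by stabilizing with respect to representation disks $D(V)$ rather than trivial disks, then establish a fixed-point formula $\mc{H}_G^\infty(X)^H \simeq \prod_{(K) \leq H} \mc{H}^\infty(X^K_{hWK})$ matching the algebraic one, and finally upgrade these identifications to a $G$-equivalence of spectra. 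The present paper explicitly restricts itself to the non-manifold portion of this program: it constructs the unit map and identifies its fiber on fixed points with a wedge of non-equivariant stable $h$-cobordism spectra, deferring the geometric identification of $\textbf{Wh}_G(X)$ with $\mc{H}_G^\infty(X)$ to later work.
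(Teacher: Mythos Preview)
First, note that this statement is a \emph{conjecture}: the paper explicitly does not prove it, but rather takes ``the first step'' toward it by constructing the map $\Sigma^\infty_G X_+ \to \bA_G(X)$ and identifying the fixed points of its fiber (the Main Theorem). Your outline correctly recognizes this, and your final paragraph accurately summarizes what the paper actually accomplishes versus what it defers.

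However, your step (2) contains a genuine gap. You propose to obtain the splitting by observing that on $H$-fixed points the map decomposes, via the tom Dieck formulas, into non-equivariant unit maps $\Sigma^\infty (X^K_{hWK})_+ \to \bA(X^K_{hWK})$, each of which splits by Waldhausen's theorem, and you then claim the multifunctor framework lets you assemble these factor-wise splittings into a splitting of $G$-spectra as ``bookkeeping.'' This does not follow. That the \emph{forward} map is a map of spectral Mackey functors says nothing about whether the \emph{retractions} assemble into one. The non-equivariant retraction $\bA(Y) \to \Sigma^\infty Y_+$ is Waldhausen's trace map, and to promote the collection of these to a map of $\GB^\Wald$-modules you would have to verify that the trace commutes with the action of spans---in particular with transfers $q_!$---which is a substantive compatibility, not formal bookkeeping. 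The multifunctor machinery in the paper is used to build the forward map and to compare two models of $\Sigma^\infty_G X_+$; it is not a device that upgrades levelwise splittings to Mackey splittings.

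Accordingly, the paper does not attempt step (2) at all: it constructs only the forward map as a map of Mackey functors, takes its \emph{fiber} $\textbf{H}_G(M)$ (not a complementary summand), and identifies that fiber on fixed points using the non-equivariant parametrized $h$-cobordism theorem. The product decomposition $\bA_G(X) \simeq \Sigma^\infty_G X \times \textbf{Wh}_G(X)$ remains open even after the paper's results.
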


In this paper we take the first step in the proof of this conjecture.  We construct a map $\Sigma_G^\infty X_+\to \bA_G(X)$ for every $G$-space $X$, and in the case of a compact smooth $G$-manifold, we show that the fixed points of the fiber of this map split up as a product over stable $h$-cobordism spaces. We summarize the content of \autoref{inclusions}, \autoref{all_natural_in_X},  \autoref{it_models_suspension_spectrum} and \autoref{fiber_identification} as follows.

\begin{mainthm}
Let $M$ be a smooth compact $G$-manifold. There is  a fiber sequence of $G$-spectra $$\textbf{H}_G(M)\longrightarrow \Sigma_G^\infty M_+\longrightarrow \bA_G(M)$$ where $\textbf{H}_G(M)$ is a $G$-spectrum with the property that its $H$-fixed points have underlying infinite loop space
$$\Omega^\infty \textbf{H}_G(M)^H\simeq  \prod_{(K) \leq H} \mathcal{H}^\infty (M^K_{hWK}),$$
 where $\mathcal{H}^\infty$   denote the stable $h$-cobordism spaces.
\end{mainthm}

Part of the content of the theorem above is to construct the map $\Sigma_G^\infty X_+\to \bA_G(X)$ appropriately. The $G$-spectrum $\textbf{H}_G(M)$ is then defined as the fiber of this map, so that a priori it has nothing to do with $h$-cobordism spaces. However, on the $G$-fixed points we can apply the nonequivariant stable parametrized $h$-cobordism theorem, and conclude that its infinite loop space splits over conjugacy classes into nonequivariant stable $h$-cobordism spaces $\mathcal{H}^\infty(M^H_{hWH})$. We can then describe each of these spaces as the space of $h$-cobordisms on the subspace of $M$ whose isotropy is exactly $H$, with $M$ stabilized by representation discs $D(V)$. We call this  ``the non-manifold part" of the equivariant stable $h$-cobordism theorem as a homage to the classical proof of \cite{waldhausennew}, though we note that it is not the equivariant analogue of that part of their proof.

The ``manifold part" will be the further step of showing that this product space can be re-assembled into a single space of $G$-equivariant $h$-cobordisms on $M$, stabilized with respect to representation disks. This requires significantly more geometric techniques and will appear in joint work with Goodwillie and Igusa. The necessary definitions and properties of $h$-cobordism spaces of $G$-manifolds are also needed in current work of Igusa and Goodwillie on equivariant higher torsion invariants. 


The idea of the proof of the main theorem is to construct a  map from the equivariant suspension spectrum to the equivariant $A$-theory spectrum which is compatible with the splittings over conjugacy classes on fixed points. We adopt the same ``spectral Mackey functor'' approach we took in \cite{CaryMona}, so the strategy is to glue together these inclusions along restriction and transfer maps into a map of $G$-spectra. This is harder than it appears because it requires us to synthesize different definitions of the suspension $G$-spectrum as a spectral Mackey functor in order to reconcile it with the formulas in \cite{BD}. In order to achieve this, we  give  a new perspective on spectral Mackey functors in terms of multicategories and multifunctors as a way to express ``rings on many objects and modules over such,"  which makes the analysis more streamlined.
 
The paper is organized as follows. In \autoref{spectralmack} we review spectral Mackey functors and reinterpret them as multifunctors from parameter multicategories for rings on many objects and modules over such. 
Using this language and the multiplicative comparison of Segal and Waldhausen $K$-theory from \cite{bohmann_osorno}, we prove an equivalence between two distinct notions of spectral Mackey functor from \cite{GM1} and from \cite{CaryMona}.
In \autoref{map},  we construct a map of $G$-spectra $\Sigma_G^\infty X_+\rightarrow \bA_G(X)$ that respects the tom Dieck splittings on the fixed points. For this we use use the categories of $H$-equivariant ``homotopy discrete retractive spaces'' from \cite{BD}
and the comparison of Mackey functors from the previous section. Lastly, in  \autoref{fiber} we analyze the fiber of the map $\Sigma_G^\infty M_+\to \bA_G(M)$. Using the non-equivariant stable parametrized $h$-cobordism theorem, we show that it has a tom Dieck style decomposition as a product of nonequivariant stable $h$-cobordism spectra.




\subsection*{Acknowledgements} We are especially thankful to Tom Goodwillie for sharing his vision and ideas that are at the core of this program. We also thank Clark Barwick, Wolfgang L{\"u}ck,   Stefan Schwede, and Shmuel Weinberger for helpful discussions and encouragement, and to Anna Marie Bohmann and Ang{\'e}lica Osorno for generously sharing drafts of their work in progress on the multiplicative comparison of Waldhausen and Segal $K$-theory before it appeared. We thank Wojtek Dorabia\l{}a and Bernard Badzioch for patiently explaining to us some of the subtleties in their work on the fixed points of equivariant $A$-theory, which we crucially use in this paper. Lastly, we are immensely grateful to the Max Planck Institute in Bonn for their hospitality during the fall 2018 semester when both authors were visiting the MPI.

\section{Spectral Mackey functors as multifunctors}\label{spectralmack}

We start by reviewing spectral Mackey functors, and giving them a different interpretation using the language of multifunctors. Recall that a \emph{spectrally enriched category} or \emph{spectral category} $\sR$ is a ``ring on many objects'' in $\Sp$: precisely, it is a collection of spectra $\sR_{a,b}$, one for each pair of objects $a,b$ in $\sR$, together with unit maps $S\to \sR_{a,a}$ and unital and associative composition maps $\sR_{a,b}\sma\sR_{c,a}\to \sR_{c,b}$. A spectrally enriched functor $\sR\to \Sp$ is ``module over $\sR$'': precisely, it is given by a spectrum $\sM_a$ for every object $a$ of $\sR$, and unital and associative action maps of spectra  $\sM_b \sma \sR_{a,b}\to   \sM_a$. For a  discussion of this perspective, which has inspired our point of view, see \cite{schwede_shipley_morita}. In this paper, our ring and module spectra all take values in the category of symmetric spectra from \cite{diagram,MMSS}, though the discussions would apply just as well with orthogonal spectra.


Let $G$ be a finite group. There is a ring $\GB$ on object set $\{G/H\}_{H\leq G}$ whose spectra $\GB(G/H, G/K)$ are formed by applying Segal $K$-theory to the symmetric  monoidal category of spans from $G/H$ to $G/K$. This spectral category was introduced in \cite{GM1}, where it is shown that modules over this ring are equivalent to $G$-spectra.

\begin{thm}\cite{GM1}
There is a Quillen equivalence $$\GB-\Mod\simeq G\Sp,$$ where $G\Sp$ is the category of orthogonal $G$-spectra.
\end{thm}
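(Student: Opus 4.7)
I would set up a Quillen adjunction $\adj{L}{\GB\text{-}\Mod}{G\Sp}{R}$ in which $R$ sends an orthogonal $G$-spectrum $X$ to the spectrally enriched functor $G/H \mapsto F^G(\Sigma_G^\infty (G/H)_+, X)$, with $\GB$-action coming from composition with orbit maps, and $L$ is the enriched coend $L(\sM) = \int^{G/H} \sM(G/H) \sma \Sigma_G^\infty(G/H)_+$. The task is then to show the derived unit and counit are weak equivalences.

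First I would put the projective stable model structure on $\GB\text{-}\Mod$, where weak equivalences and fibrations are detected objectwise on underlying symmetric spectra, with generating (trivial) cofibrations of the form $\GB(G/H,-)\sma i$ for $i$ a generating (trivial) cofibration of $\Sp$. That $R$ preserves fibrations and trivial fibrations is automatic, since both are detected objectwise; hence $(L,R)$ is a Quillen pair.

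The key technical step is to verify the spectral identification
$$F^G\bigl(\Sigma_G^\infty(G/H)_+, \Sigma_G^\infty(G/K)_+\bigr) \simeq \GB(G/H, G/K),$$
enhancing the classical Segal--tom Dieck computation of $\pi^G_0$ of a smash product of orbits to a spectrum-level equivalence. This requires choosing point-set models in which composition of stable maps corresponds to composition (i.e.\ pullback) of spans; such models are available because the category of finite $G$-sets under disjoint union is permutative and pullback is a bilinear pairing, producing a multiplicative structure on Segal $K$-theory strictly compatible with composition in $\GB$.

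Granted this computation, the theorem would follow from a Morita-theoretic argument in the style of Schwede--Shipley: the orbit suspension spectra $\{\Sigma_G^\infty(G/H)_+\}_{H\leq G}$ form a set of compact generators of $G\Sp$, and by the previous paragraph $R$ is homotopically fully faithful on them, so the derived adjunction restricts to an equivalence on the thick subcategories they generate, which exhaust both homotopy categories. The main obstacle is the spectral enhancement step: at the level of $\pi_0$ it is classical Segal--tom Dieck, but promoting it to an equivalence of spectra that is strictly coherent with composition is precisely the multiplicative coherence problem the rest of the paper addresses through the Bohmann--Osorno comparison machinery.
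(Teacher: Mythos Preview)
The paper does not prove this theorem; it is stated with attribution to \cite{GM1} (Guillou--May) and used as a black box. There is therefore no paper proof to compare your proposal against.

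That said, your outline is a reasonable sketch of the Guillou--May argument: one sets up the Quillen adjunction with $R(X)(G/H) = F^G(\Sigma_G^\infty(G/H)_+, X)$, establishes the spectral Segal--tom Dieck identification $F^G(\Sigma_G^\infty(G/H)_+, \Sigma_G^\infty(G/K)_+) \simeq \GB(G/H, G/K)$ compatibly with composition, and then invokes a Schwede--Shipley style generator argument. One clarification: your final sentence conflates two distinct coherence problems. The Bohmann--Osorno machinery in this paper is used to compare $\GB$ (built from Segal $K$-theory of span categories) with $\GB^\Wald$ (built from Waldhausen $K$-theory), not to establish the equivalence of $\GB$-modules with $G$-spectra. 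The multiplicative coherence needed for the Guillou--May theorem itself---making the span-category pairings match composition of stable maps---is handled entirely within \cite{GM1} and is logically prior to anything in the present paper.
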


In \cite{CaryMona} we considered an analogous ring on many objects $\GB^\Wald$, defined instead as the Waldhausen $K$-theory of the categories of retractive spaces over $G/H\times G/K$. We claimed that forthcoming work would show that the rings $\GB$ and $\GB^\Wald$ are equivalent, and therefore their categories of modules are Quillen equivalent. In this first section we show how to make that theorem precise (\autoref{GB_equivalence}), by combining the main result of \cite{bohmann_osorno} with a framework for studying spectral categories that re-interprets modules and spectral enrichments in terms of maps of multifunctors.

This establishes that modules over $\GB$ and over $\GB^\Wald$ are equivalent to each other. We can therefore use the term ``spectral Mackey functor" to refer to a module over either of these two spectral categories. Along the equivalence to orthogonal $G$-spectra, every spectral Mackey functor produces an orthogonal $G$-spectrum. This allows us to build $G$-spectra in a categorical way from either symmetric monoidal or Waldhausen categories. In particular, the equivariant $A$-theory spectrum $\bA_G(X)$ we are interested in is defined as a module over $\GB^\Wald$.

%

\subsection{Multicategories and multifunctors}\label{multisec} To describe $\GB$ and $\GB^{\Wald}$ and their comparison, and more importantly, to facilitate comparing modules over them, we will use the language of multicategories. Recall that a \emph{(non-symmetric) multicategory} $\cat C$ has a collection of objects, a set of $n$-ary morphisms $\cat C(a_1,\ldots,a_n;b)$ for every $n \geq 0$ and list of objects $a_1,\ldots,a_n,b$, composition maps
\[ \cat C(a_1^1,\ldots,a_1^{k_1};b_1) \times \ldots \times \cat C(a_n^1,\ldots,a_n^{k_n};b_n) \times \cat C(b_1,\ldots,b_n;c) \longrightarrow \cat C(a_1^1,\ldots,a_n^{k_n};c) \]
that are associative, and identity morphisms in $\cat C(a;a)$ that serve as left and right units for the composition. A \emph{multifunctor} $\cat C \to \cat D$ is a function of objects and of $n$-ary morphisms for every $n \geq 0$ that respects composition and identity morphisms. A \emph{multinatural transformation} $\eta$ of multifunctors $F \Rightarrow G$ assigns to each object $a$ of $\cat C$ a 1-ary morphism $\eta(a)\colon F(a) \to G(a)$, such that for every $n$-ary morphism $f$ in $\cat C$ the square formed from $F(f)$, $G(f)$, and various instances of $\eta$ commutes. A detailed treatment is given in \cite{yau}.

\begin{exmp}\label{multicatex}\hfill 
	\vspace{-1em}
	
	\begin{enumerate}
		\item Every monoidal category $\mc V$ gives a multicategory in which the $n$-ary morphisms are the ordinary morphisms from  $a_1 \otimes \ldots \otimes a_n$, with a fixed way to associate $\otimes$, to $b$. In particular, symmetric and orthogonal spectra form multicategories. A multifunctor between two multicategories arising from monoidal categories corresponds to a monoidal functor. 
		\item Any (non-symmetric) operad $\sO$ is a (non-symmetric) multicategory with one object. A multifunctor from $\sO$ to a multicategory that arises from a monoidal category as in the previous example corresponds precisely to an $\sO$-algebra structure on the object that the unique object of the multicategory $\sO$ maps to.
		\item The multicategory $\Wald$ has objects Waldhausen categories and $n$-ary morphisms multiexact functors. The equivalent multicategory $\Wald_\vee$ has the same morphisms but the objects are also equipped with a choice of direct sum functor.
		\item The multicategory $\SMC$ has objects the symmetric monoidal categories with strict unit, 
		and $n$-ary morphisms the ``multilinear'' functors, see \cite{bohmann_osorno}. The multicategory $\SMC_{we}$ is defined in the same way except that the symmetric monoidal categories have chosen subcategories of weak equivalences, preserved by the tensor product.
		\item We can restrict the previous example further to cocartesian monoidal categories (with strict unit), giving a multicategory $\cCMC$.
	\end{enumerate}
	\end{exmp}
	
	\begin{rem}
		Note that \cite{EM} and \cite{BO, bohmann_osorno} work with \emph{symmetric} multicategories and multifunctors. This is a stronger notion that also encodes \emph{commutative} multiplicative structure. In this paper we only need to use non-symmetric multicategories and multifunctors, and we restrict our attention to them for simplicity.
	\end{rem}

	\subsection{Modules over rings on many objects as multifunctors}
	We can make sense of rings, modules, and other algebraic structures in a category that has a multicategory structure, and these notions correspond to the usual notions if the multicategory structure comes from a monoidal structure such as in \autoref{multicatex}(1). The first to introduce the idea of using multicategories to encode multiplicative structure at the categorical level and carry it over to $K$-theory spectra via multifunctors was \cite{EM}. 
	
	The idea employed  in \cite{EM} is to use small parameter multicategories that encode the desired algebraic structure and then ask for a multifunctor from such a parameter multicategory to any multicategory. For example, a multifunctor out of the multicategory with one object and all multimorphism sets given by $\ast$ to any multicategory $\mc M$ picks out an object in $\mc M$ that carries the structure of a ring. In the case when the multicategory structure on $\mc M$ comes from a  monoidal category structure, a ring in this sense is just a monoid with respect to the monoidal structure. We refer the reader to \cite{EM} for a thorough discussion and definitions for parameter multicategories encoding many kinds of algebraic structures. In the next definition we introduce parameter multicategories that encode ``rings on many objects" and modules over such, generalizing the definitions os parameter multicategories for rings and modules from \cite{EM}.
	
	\begin{defn} Let $S$ be a set. We define the following  parameter multicategories.
	\begin{enumerate}
		\item The multicategory $ \cat R_S$ has objects $S \times S$, 0-ary morphism sets
		\[ \cat R_S(;(s,t)) = \left\{ \begin{array}{ccl}
		{*} && s = t \\
		\emptyset && \textup{otherwise}
		\end{array} \right. \]
		and for $n > 0$, $n$-ary morphism sets
		\[ \cat R_S((s_1,t_1),\ldots,(s_n,t_n);(s,t)) = \left\{ \begin{array}{ccl}
		{*} && s = s_1, t_1 = s_2, \ldots, t_{n-1} = s_n, t_n = t \\
		\emptyset && \textup{otherwise}.
		\end{array} \right. \]
		When $S = *$, this multicategory is just the non-symmetric associative operad.
		\item The multicategory $\cat M_S$ has objects $(S \times S) \amalg S$, morphism sets identical to $\cat R_S$ when all the objects are in $(S \times S)$, and for $n \geq 0$, $n$-ary morphism sets that are $*$ for objects of the form
		\[ s_1,(s_1,s_2),(s_2,s_3),\ldots,(s_{n-1},s_n);s_n \]
		and $\emptyset$ otherwise.
		
	\end{enumerate}
\end{defn}

The definitions of $\cat R_S$ and $\cat M_S$ are arranged precisely to make \autoref{multi1}, \autoref{multi2}, and \autoref{multi3} true.
\begin{lem}\label{multi1}
	If $\mc V$ is a monoidal category then the category of multifunctors $\cat R_S \to \mc V$ and multinatural transformations is isomorphic to the category of $\mc V$-enriched categories with object set $S$ and $\mc V$-enriched functors. Moreover, the category of multifunctors $\cat M_S \to \mc V$ and multinatural transformations is isomorphic to the category  of $\mc V$-enriched modules and $\mc V$-enriched maps of modules.
\end{lem}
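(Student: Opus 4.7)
The proof is essentially an unpacking of definitions, constructing an inverse pair of functors in each case. For the first statement, given a multifunctor $F \colon \cat R_S \to \mc V$, I would build a $\mc V$-enriched category $\mc C_F$ with hom objects $\mc C_F(s,t) := F(s,t)$. The unique $0$-ary morphism in $\cat R_S(;(s,s))$ forces $F$ to produce a map $I \to F(s,s)$ from the monoidal unit, supplying the enriched unit maps; the unique $2$-ary morphism in $\cat R_S((s,t),(t,u);(s,u))$ produces composition maps $F(s,t) \otimes F(t,u) \to F(s,u)$. Associativity and unitality of $\mc C_F$ are forced by multifunctoriality, since the two ways to build the unique $3$-ary morphism in $\cat R_S((s,t),(t,u),(u,v);(s,v))$ out of $2$-ary morphisms necessarily coincide---every hom-set of $\cat R_S$ is either a singleton or empty---and therefore $F$ must send them to the same map in $\mc V$. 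The unit axioms follow the same pattern from composing the $0$-ary with the $2$-ary.

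In the reverse direction, given a $\mc V$-enriched category $\mc C$ on object set $S$, I define $F_{\mc C}$ on objects by $F_{\mc C}(s,t) := \mc C(s,t)$ and on the unique $n$-ary morphism in $\cat R_S((s_0,s_1),\ldots,(s_{n-1},s_n);(s_0,s_n))$ by the iterated composition $\mc C(s_0,s_1) \otimes \cdots \otimes \mc C(s_{n-1},s_n) \to \mc C(s_0,s_n)$ (taken to be the unit when $n=0$). Associativity of $\mc C$ makes this well-defined, and the axioms of multifunctoriality reduce to repeated instances of associativity and unitality of $\mc C$. These two constructions are mutually inverse on objects; for morphisms one observes that a multinatural transformation $F \Rightarrow G$ is completely recorded by its $1$-ary components $F(s,t) \to G(s,t)$, and the naturality squares for the $0$-ary and $2$-ary morphisms in $\cat R_S$ are precisely the unit and composition compatibilities required of a $\mc V$-enriched functor.

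The second part follows by the same strategy applied to $\cat M_S$. Restriction along the full sub-multicategory on the objects $S \times S$ recovers the underlying $\mc V$-enriched category $\mc C$; on each module object $s \in S$, the multifunctor assigns an object $\sM_s := F(s)$, and the unique $n$-ary morphism with inputs $s_1,(s_1,s_2),\ldots,(s_{n-1},s_n)$ and output $s_n$ is sent to an iterated action $\sM_{s_1} \otimes \mc C(s_1,s_2) \otimes \cdots \otimes \mc C(s_{n-1},s_n) \to \sM_{s_n}$. The $n=1$ and $n=2$ cases encode the identity and the module action, and multifunctoriality---again using that each hom-set is a singleton or empty---forces all higher operations to factor through these and imposes the module associativity and unit axioms. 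Multinatural transformations correspondingly recover maps of enriched modules.

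The main obstacle is purely bookkeeping: one must keep straight the distinction between $n$-ary morphisms in the abstract multicategories $\cat R_S$, $\cat M_S$ and $n$-ary morphisms in the multicategory associated to the monoidal structure on $\mc V$, and verify that the various constraints transfer correctly under each fixed bracketing of $\otimes$. I expect no conceptual difficulty, as the argument is the many-object generalization of the well-known ``operadic encoding'' of rings and modules by parameter multicategories found in \cite{EM}.
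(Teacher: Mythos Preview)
Your proposal is correct and is precisely the unpacking the paper has in mind: the paper gives no proof at all, stating only that ``the definitions of $\cat R_S$ and $\cat M_S$ are arranged precisely to make \autoref{multi1}, \autoref{multi2}, and \autoref{multi3} true.'' Your argument fills in exactly the expected details---extracting units and composition from the $0$- and $2$-ary morphisms, and deducing the coherence axioms from the fact that all nonempty multimorphism sets in $\cat R_S$ and $\cat M_S$ are singletons.
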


\begin{exmp}
	If $\mc V=\Sp$ is the category of orthogonal or symmetric spectra, then a multifunctor $\cat R_S\rightarrow \Sp$ corresponds to a spectral category $\sR$ with object set $S$, which can be viewed as a ring ``on many objects" in spectra. 
	Moreover, a functor $\cat M_S \to \Sp$ corresponds to a spectral category $\sR$ with object set $S$ and a module $\mc M$ over $\sR$, i.e. a set of spectra $\mc M_s$ indexed on the elements of $S$, together with associative and unital action maps $\mc M_s \sma \sR_{t,s}\to\mc M_t$ for pairs of objects $s,t\in S$. Such a module is the same data as a spectrally enriched functor $\sR\to \Sp$.
	
	A map of $\sR$-modules $\sM\to \sM'$, which consists of maps of spectra $\sM_s\to \sM'_s$ for all $s\in S$ that commute strictly with the action of $\sR$, corresponds to a multinatural transformation of functors $\cat M_S \to \Sp$, which is the identity on $\cat R_S$. 
\end{exmp}

It is also possible to consider categories indexed on a set $S$ that are enriched in a multicategory $\mc M$, rather than just a symmetric monoidal category $\mc V$. For the definitions of $\mc M$-enriched category where $\mc M$ is a multicategory, and $\mc M$-enriched functors, see \cite[Def. 2.5., Def. 2.9.]{BO}. A translation of definitions gives the following lemma.

\begin{lem}\label{multi2}
	If $\mc M$ is a multicategory then the category of multifunctors $\cat R_S \to \mc M$ and multinatural transformations is isomorphic to the category of $\mc M$-enriched categories with object set $S$ and maps of $\mc M$-enriched categories.
\end{lem}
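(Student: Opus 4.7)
The plan is to unpack both sides of the claimed isomorphism, parallel to the argument for \autoref{multi1}. The key observation is that the parameter multicategory $\cat R_S$ is freely generated, as a multicategory, by the unique 0-ary morphisms $\eta_s \in \cat R_S(;(s,s))$ together with the unique 2-ary composition morphisms $\mu_{s,t,u} \in \cat R_S((s,t),(t,u);(s,u))$, modulo the evident associativity and unit relations among them. Indeed, every $n$-ary morphism set in $\cat R_S$ is either empty or a singleton, and any nontrivial one (corresponding to a composable chain) arises as some iterated composition of these generators; since the morphism sets are singletons, every such iteration produces the same element, so associativity and unitality hold tautologically in $\cat R_S$.

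Given this presentation, a multifunctor $F : \cat R_S \to \mc M$ amounts to the data of an object $F(s,t) \in \mc M$ for every pair $(s,t) \in S \times S$, a 0-ary morphism $F(\eta_s) \in \mc M(;F(s,s))$ for every $s$, and a 2-ary morphism $F(\mu_{s,t,u}) \in \mc M(F(s,t),F(t,u);F(s,u))$ for every triple, satisfying the associativity and unitality axioms in $\mc M$. By the definition of an $\mc M$-enriched category in \cite[Def.~2.5]{BO}, this is exactly an $\mc M$-enriched category with object set $S$. Conversely, any such enriched category extends uniquely to a multifunctor out of $\cat R_S$ by taking the $n$-ary images to be the iterated binary composites of the given 2-ary composition morphisms.

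For morphisms, a multinatural transformation $\eta : F \Rightarrow G$ assigns to each object $(s,t)$ of $\cat R_S$ a 1-ary morphism $\eta(s,t) \in \mc M(F(s,t);G(s,t))$, subject to the naturality squares indexed by every $n$-ary morphism in $\cat R_S$. By the same generators-and-relations description, these conditions collapse to the naturality squares for the units and for binary composition, which is precisely the data of an identity-on-objects $\mc M$-enriched functor in the sense of \cite[Def.~2.9]{BO}. Composition of multinatural transformations matches composition of $\mc M$-enriched functors, yielding the claimed isomorphism of categories.

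The only real obstacle is bookkeeping: one must verify that the associativity and unitality relations induced from $\cat R_S$ match verbatim those adopted for $\mc M$-enriched categories in \cite{BO}, and that the naturality squares indexed by higher $n$-ary morphisms are forced by those indexed by the generators. Both checks are routine consequences of the singleton structure of the nonempty multimorphism sets of $\cat R_S$, and they mirror the corresponding verifications already made in the monoidal-category case of \autoref{multi1}.
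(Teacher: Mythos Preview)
Your proposal is correct and is precisely the ``translation of definitions'' that the paper alludes to without spelling out; the paper offers no argument beyond that phrase, and your unpacking via the generators-and-relations description of $\cat R_S$ is the natural way to make it explicit.
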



%

\begin{exmp}
Fix a finite group $G$ and let $S$ be the set of subgroups of $G$.  The $\cCMC$-enriched category $G\sE$ from \cite[Def. 7.2.]{BO} assigns each pair of subgroups $H, K \leq G$ to the cocartesian monoidal category of spans
\[ \xymatrix @R=1em @C=1em{
	& \ar[ld] S \ar[rd] & \\
	G/H && G/K
} \]
and composes these categories by taking pullbacks of the spans. This corresponds to a multifunctor $\cat R_S\to \cCMC$, assigning each pair $(H,K)$ to the above category of spans and each list of subgroups to the functor
\[ G\sE(H_0,H_1) \times \ldots \times G\sE(H_{n-1},H_n) \longrightarrow G\sE(H_0,H_n) \]
that takes the pullback of all the spans in the list. The postcomposition with the multifunctor $K^\Seg\colon \SMC \rightarrow \Sp$ to symmetric spectra constructed in \cite{EM} gives the spectral category $\GB$. 
\end{exmp}

\begin{exmp}
	Alternatively, we could assign each pair of subgroups $H,K \leq G$ to the Waldhausen category of retractive spans $\mc S_{H,K}$, in other words spans containing $G/H \times G/K$ as a retract. 
	The pairings outlined in \cite[Definition 4.3.]{CaryMona} extend this to a multifunctor $\cat R_S \to \Wald$. The postcomposition with the multifunctor $K^\Wald\colon \Wald \rightarrow \Sp$  to symmetric spectra constructed in \cite{inna} gives the spectral category $\GB^\Wald$.
\end{exmp}

\begin{rem}
Both examples above adopt the conventions of  \cite[\S 1.1]{GM1}, assuming that every  $G$-set $A$ is one of the sets $\{1,\ldots,n\}$ with a $G$-action given by some homomorphism $G\ra \Sigma_n$, so that the coproduct, product, and pullback are given by specific formulas that make them associative on the nose. In particular, the pullback is defined by taking a subset of the product, ordered lexicographically.  This chosen model for the pullback of $G$-sets has the slight defect that the unit span 
\[ \xymatrix @R=1em @C=1em{
& G/H \ar[ld]_-{\id} \ar[rd]^-{\id} & \\
G/H && G/H } \]
is only a left-sided unit for the multiplication on the span category, and this is rectified by whiskering the category of spans with a formal unit object $\mathbf{1}_{G/H}$ and a unique isomorphism $\mathbf{1}_{G/H} \cong (G/H)_+$. For a more detailed discussion of this, see \cite[\S 1.1]{GM1} or \cite[Remark 4.1.]{CaryMona}
\end{rem}

Our use of the parameter multicategories $\cat R_S$ and $\cat M_S$ here reinterprets some of the concepts that appear in \cite{BO} in terms of rings on many objects and modules over them as opposed to enriched categories and functors. Rather than defining $\GE$ as a category enriched in $\SMC$   and defining a module over it as a $\SMC$-enriched functor $\GE \to \SMC$, and then changing the enrichment to get a spectrally-enriched functor $\GB \to \Sp$, we define $\GE$ as a multifunctor $\cat R_S \to \SMC$, and we define a module over $G\sE$ in $\SMC$ as an extension of this to a multifunctor $\cat M_S \to \SMC$. The change of enrichment is accomplished by postcomposing this multifunctor with $K^\Seg\colon \SMC \to \Sp$ to get a multifunctor $\cat M_S \to \Sp$, which in turn corresponds to a module over $\GB$, or equivalently a spectrally enriched functor $\GB\to \Sp$. The following lemma accomplishes this translation.

\begin{lem}\label{multi3}
Suppose $\sR$ is a category with object set $S$ enriched over a multicategory $\mc M$, and suppose $\mc M$ is enriched over itself. The category of $\mc M$-enriched functors $\sR\to \mc M$ and enriched natural transformations is isomorphic to the category of multifunctors $\cat M_S \to \mc M$ which restrict to the multifunctor defining $\sR$ on $\cat R_S$, and multinatural transformations of multifunctors.
\end{lem}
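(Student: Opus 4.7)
The plan is to prove this lemma by unpacking both sides of the claimed isomorphism and observing that the data and axioms match via the self-enrichment of $\mc M$. On the one hand, an $\mc M$-enriched functor $F\colon \sR \to \mc M$ consists of an object $F(s) \in \mc M$ for each $s \in S$ together with $1$-ary morphisms $F_{s,t}\colon \sR(s,t) \to \mc M(F(s), F(t))$ in $\mc M$ that respect unit and composition. On the other hand, a multifunctor $\Phi\colon \cat M_S \to \mc M$ restricting on $\cat R_S$ to the multifunctor defining $\sR$ is already determined on the $(S \times S)$-objects and the multimorphisms between them; the remaining data is an object $\Phi(s) \in \mc M$ for each $s \in S$, together with, for each $n \geq 1$ and each tuple $s_1, \dots, s_n \in S$, an $n$-ary morphism $\Phi(s_1), \sR(s_1,s_2), \dots, \sR(s_{n-1},s_n) \to \Phi(s_n)$ in $\mc M$, subject to the unit and composition axioms of a multifunctor.

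First I would match the data on objects by setting $\Phi(s) := F(s)$. To pass from $F$ to $\Phi$, I would use the evaluation $2$-ary morphism $a, \mc M(a,b) \to b$ (which exists because $\mc M$ is self-enriched) precomposed with $F_{s_1,s_2}$ to produce the $n=2$ action morphism $\Phi(s_1), \sR(s_1,s_2) \to \Phi(s_2)$. The morphisms for $n \geq 3$ are then forced by multifunctoriality, using the composition maps inside $\cat R_S$ to fold several copies of $\sR$ into a single composite, and the $n=1$ morphism must be the identity by unitality of $\Phi$. Conversely, to recover $F$ from $\Phi$, I would curry the $n=2$ action morphism under the adjunction provided by the self-enrichment to obtain a $1$-ary morphism $\sR(s,t) \to \mc M(F(s), F(t))$. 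A short diagram chase checks that these two constructions are mutually inverse.

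Next I would verify that the axioms correspond. The unit axiom for $\Phi$ on the $n=1$ morphism $s;s$ translates to $F$ sending identities to identities, while the multifunctor associativity of $\Phi$, applied to compositions in $\cat M_S$ that first contract two consecutive $\sR$'s via the multiplication encoded by $\cat R_S$ and then act, translates under currying to $F$ preserving composition. For multinatural transformations $\eta\colon \Phi \Rightarrow \Phi'$ that are the identity on $\cat R_S$, the data reduces to $1$-ary morphisms $\eta_s\colon \Phi(s) \to \Phi'(s)$, and the naturality condition with respect to the $n=2$ action morphisms recovers, via the same currying, the standard naturality condition for an $\mc M$-enriched natural transformation.

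The main obstacle is handling the ``enriched over itself'' hypothesis for a general multicategory $\mc M$: one must identify the evaluation morphisms $a, \mc M(a,b) \to b$ and the currying bijection between $n$-ary morphisms and $(n-1)$-ary morphisms into the appropriate hom-object, and check that these interact well with the multifunctor axioms. For the multicategories we actually use ($\Sp$, $\SMC$, $\Wald$) this self-enrichment is standard and concrete, so the verification is bookkeeping once the bijection is set up. I would organize the check by handling the low-arity cases $n \leq 2$ first, where all the essential content lives, and then extending to higher arities by induction using multifunctor associativity together with the composition structure of $\cat R_S \subset \cat M_S$.
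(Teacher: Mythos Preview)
The paper does not supply a proof of this lemma; like the two lemmas preceding it, it is asserted as a direct translation of definitions. Your proposal is precisely such a translation, carried out in more detail than the paper itself offers, and the strategy---matching the module data with the $n=2$ multimorphisms via currying along the self-enrichment, then recovering higher arities from multifunctoriality and the composition in $\cat R_S$---is the natural and intended one. Your caveat about what ``enriched over itself'' must entail (namely an evaluation/currying bijection, i.e.\ a closed structure) is exactly the point one has to be careful about in general, but as you note it is unproblematic in the cases $\Sp$, $\SMC$, $\Wald$ actually used in the paper.
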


In particular, the $\SMC$-enriched functors $\GE\to \SMC$ from \cite{BO} correspond to a multifunctors $\cat M_S\to \SMC$ which restrict to the multifunctor defining $\GE$ on $\cat R_S$. Moreover, the module over $\GB^\Wald$  from \cite{CaryMona} can be reinterpreted as a multifunctor.

\begin{exmp}
	Fix a finite group $G$ and let $S$ be the set of subgroups of $G$. The main construction  in \cite[\S 4]{CaryMona} recalled in \autoref{AGreview} below can be restated as saying that we construct a multifunctor $\cat M_S \to \Wald$ sending $(H,K)$ to the Waldhausen category $\mc S_{H,K}$ and $H$ to a category equivalent to the Waldhausen category $\Rhf_H(X)$ of $H$-equivariant homotopy finite retractive spaces over $X$ and genuine $H$-weak equivalences. Upon postcomposition with the multifunctor $K^\Wald\colon \Wald \rightarrow \Sp$ to symmetric spectra constructed in \cite{inna}, we get a multifunctor $\GB^\Wald\to \Sp$, corresponding to the module $\bA_G(X)$ over $\GB^\Wald$.
\end{exmp} 

The multiplicative comparison of Waldhausen and Segal $K$-theory in \cite{bohmann_osorno} takes the following form. First note that the multicategory $\Wald$  is equivalent to the multicategory $\Wald_\vee$ where for each pair of objects, there is a chosen coproduct, and in all the multifunctors in the examples above we can replace $\Wald$ by $\Wald_\vee$ by making choices of coproducts in the categories $\mc S_{H,K}$ and $\Rhf_H(X)$. 
\begin{thm}[\cite{bohmann_osorno}]\label{bo_mult}
	The passage from a Waldhausen category to its underlying category with direct sum defines a multifunctor
	$$\Lambda\colon \Wald_\vee \rightarrow \cCMC_{we},$$ where $\cCMC_{we}$ denotes cocartesian monoidal categories with weak equivalences.
	There is a multinatural transformation of multifunctors 
	\[\xymatrix{
		\Wald_\vee \ar[dr]_-{K^\Wald(-)} \ar[rr]^-\Lambda && \cCMC_{we}\ar[dl]^-{K^\Seg(w-)}\\
		& \Sp &
	}\]
\end{thm}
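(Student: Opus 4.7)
The plan is to construct $\Lambda$ and the natural transformation explicitly, then verify the multifunctoriality and multinaturality by unwinding the definitions of the two $K$-theory machines.

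First, I would define $\Lambda$ on objects by sending a Waldhausen category $(\mathcal{W},\textup{cof}, w, \vee)$ to the same underlying category equipped with the cocartesian monoidal structure determined by $\vee$ and the zero object, together with the same subcategory of weak equivalences. The verification that $(\mathcal{W},\vee,0)$ really lies in $\cCMC_{we}$ rests on the observation that in any Waldhausen category with a chosen direct sum, $a \vee b$ is the categorical coproduct, with the universal maps supplied by the distinguished cofibrations $a \hookrightarrow a \vee b \hookleftarrow b$, and the zero object is a strict initial unit. On $n$-ary morphisms $\Lambda$ is the identity on underlying functors: given a multi-exact functor $F\colon \mathcal{W}_1 \times \cdots \times \mathcal{W}_n \to \mathcal{W}$, the axioms of multi-exactness in each variable supply the canonical isomorphisms
\[ F(\ldots, a_i \vee b_i, \ldots) \;\cong\; F(\ldots, a_i, \ldots) \vee F(\ldots, b_i, \ldots) \]
and compatibility with weak equivalences and the zero object, which is exactly the data of a $\cCMC_{we}$-multilinear functor. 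Multifunctoriality of $\Lambda$ (compatibility with composition and units) then follows because $\Lambda$ is the identity on underlying categories and functors.

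Next I would exhibit $\eta_{\mathcal{W}}\colon K^{\Seg}(w\Lambda\mathcal{W}) \to K^\Wald(\mathcal{W})$ as the classical comparison map, which sends a direct sum $a \vee b$ to the split cofiber sequence $a \hookrightarrow a \vee b \twoheadrightarrow b$. Concretely, I would realize the Segal $K$-theory as a $\Gamma$-space (equivalently the $S_{\bullet}$-construction using only split cofibrations) and include this $\Gamma$-space into Waldhausen's full $S_\bullet$-construction, then apply the relevant infinite loop space machine. Naturality in $1$-ary morphisms is straightforward because exact functors preserve direct sums, cofibrations, and weak equivalences on the nose.

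The main obstacle is multinaturality: for every $n$-ary multi-exact $F$, the spectral pairing
\[ K^\Wald(\mathcal{W}_1) \sma \cdots \sma K^\Wald(\mathcal{W}_n) \longrightarrow K^\Wald(\mathcal{W}) \]
built from the Waldhausen $K$-theory multifunctor must agree, after precomposing with $\eta^{\sma n}$ and postcomposing with $\eta$, with the pairing induced by $\Lambda(F)$ through the Segal $K$-theory multifunctor. The natural strategy is to model both pairings simplicially using the multi-exactness of $F$ in each variable: direct-sum pairings on the Segal side become split-cofiber-sequence pairings on the Waldhausen side, and the Day-convolution-style smash products defining each multifunctor are intertwined strictly by $\eta$ on each simplicial level. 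Making this bookkeeping precise, together with the delicate handling of coherence isomorphisms relating $\SMC_{we}$-multilinearity and $\cCMC_{we}$-multilinearity under $K^\Seg$, is the technical core of the Bohmann--Osorno argument and the step one genuinely has to check rather than quote from general nonsense.
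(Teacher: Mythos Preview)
The paper does not supply a proof of this statement; it is quoted as a theorem of Bohmann--Osorno and used as a black box. There is therefore no ``paper's own proof'' to compare against.

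Your outline is a reasonable sketch of the expected argument and correctly identifies the two ingredients: the construction of $\Lambda$ on objects and multimorphisms, and the classical comparison $\eta$ between Segal and Waldhausen $K$-theory realized by inclusion of split cofiber sequences into all cofiber sequences. You also correctly flag the multinaturality check as the substantive step. One point to be careful about: the Segal and Waldhausen $K$-theory multifunctors land in \emph{symmetric spectra}, and the pairings are defined via specific point-set models (iterated $S_\bullet$ on the Waldhausen side, iterated $\Gamma$-categories on the Segal side). Your description of the comparison as ``including the $\Gamma$-space into the full $S_\bullet$-construction'' is morally right but suppresses the multisimplicial bookkeeping: Bohmann--Osorno work with an intermediate multicategory and a chain of multifunctors and multinatural transformations, rather than a single direct comparison, precisely because the point-set models do not line up on the nose. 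So while your strategy is correct in spirit, the phrase ``intertwined strictly by $\eta$ on each simplicial level'' understates the work; one typically needs a zig-zag rather than a strict equality at each level.
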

%

Composing this theorem with the multifunctor $\cat M_S \to \Wald_\vee$ from above, we get a map of spectrally-enriched categories and modules over them. When the weak equivalences are  isomorphisms,  following $K$-theory tradition \cite{waldhausen}, we denote the subcategory of weak equivalences with an $i$ instead of a $w$.

\begin{cor}\label{GBtoGB}
There is a map of spectral categories $\GB \to \GB^\Wald$.
\end{cor}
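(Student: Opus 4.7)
The plan is to exhibit the desired map as a composite of two maps of spectral categories, assembled via the multicategory framework of this section.

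First, I would construct a multinatural transformation $\iota\colon G\sE \Rightarrow \Lambda \circ \mc S$ between multifunctors $\cat R_S \to \cCMC_{we}$, where $G\sE$ carries the isomorphisms as its subcategory of weak equivalences and $\mc S\colon \cat R_S \to \Wald_\vee$ is the multifunctor $(H,K) \mapsto \mc S_{H,K}$ defining $\GB^\Wald$. At each pair $(H,K)$, the component $\iota_{H,K}\colon G\sE(H,K) \to \Lambda(\mc S_{H,K})$ would be the cocartesian monoidal functor sending a span $G/H \leftarrow A \to G/K$ to the retractive span $(G/H \times G/K) \amalg A$ equipped with its obvious retraction, and sending a map of spans to the corresponding map of retractive spans (which is an isomorphism, hence a weak equivalence). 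Compatibility with multicomposition would follow from the fact that the composition in $\mc S_{H,K}$ from \cite[Definition 4.3]{CaryMona} is arranged precisely so that composing two retractive spans of the form ``base plus honest span'' gives the same-form retractive span whose honest-span part is the pullback composition of the underlying spans.

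Next, postcomposing $\iota$ with the multifunctor $K^\Seg\colon \cCMC_{we} \to \Sp$ of \cite{EM} yields, by \autoref{multi2}, a map of spectral categories
\[ \GB \;=\; K^\Seg \circ G\sE \;\longrightarrow\; K^\Seg(w\Lambda(-)) \circ \mc S. \]
Precomposing the Bohmann-Osorno multinatural transformation of \autoref{bo_mult} with $\mc S$ then produces a multinatural transformation $K^\Seg(w\Lambda(-)) \circ \mc S \Rightarrow K^\Wald \circ \mc S = \GB^\Wald$, which is in turn a map of spectral categories by \autoref{multi2}. Composing the two maps gives the desired $\GB \to \GB^\Wald$.

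The main obstacle is the strict multinaturality check for $\iota$: the functor ``add the base as a retract'' must intertwine the pullback-of-spans multicomposition in $G\sE$ with the multicomposition in $\mc S$ from \cite[Definition 4.3]{CaryMona}. This involves tracking the conventions from \cite[\S 1.1]{GM1}---in particular the whiskering with a formal unit that strictifies the unit spans---consistently on both sides. Once this verification is in hand, the corollary becomes a formal consequence of assembling two multinatural transformations, one geometric (adding the base as a retract) and one from \cite{bohmann_osorno}.
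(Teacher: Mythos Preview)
Your approach is correct and in the same spirit as the paper's, but you are doing more work than necessary. The paper's key observation is that the multifunctors $G\sE$ and $i\Lambda\circ\mc S$ from $\cat R_S$ to $\cCMC$ are literally \emph{equal}: with the conventions of \cite[\S1.1]{GM1} and \cite[Definition~4.3]{CaryMona}, both assign to $(H,K)$ the category of spans from $G/H$ to $G/K$ with isomorphisms, and both compose via the same explicit pullback formula (the lexicographically-ordered subset of the product). So your multinatural transformation $\iota$ is in fact the identity, and the ``main obstacle'' you flag---checking that adding the base as a retract strictly intertwines the two multicompositions---evaporates. The entire map $\GB\to\GB^\Wald$ then arises from a single application of Bohmann--Osorno (\autoref{bo_mult}) precomposed with $\mc S$, rather than from a composite of two nontrivial multinatural transformations.

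Your version is not wrong, and if one adopted a convention in which $\mc S_{H,K}$ had retractive $G$-sets over $G/H\times G/K$ (rather than spans) as its literal objects, your $\iota$ would be the honest comparison functor and the multinaturality check would be a genuine (if routine) verification. But the paper's conventions are set up precisely so that this step disappears.
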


\begin{proof}
Let $S$ be the set of subgroups of $G$. Recall that $\GB^\Wald$  and $\GB$ are defined by the multifunctors
$$\cat R_S \xrightarrow{\mc S} \Wald \xrightarrow{K^\Wald} \Sp\text{\ \ \  and\ \ \ } \cat R_S\xrightarrow{\GE} \cCMC \xrightarrow{K^\Seg} \Sp,$$
respectively.
Thus by the theorem it is enough to compare the multifunctors
$$\cat R_S \xrightarrow{\mc S} \Wald_\vee \xrightarrow{\Lambda} \cCMC_i \text{\ \ \      and\ \ \      } \cat R_S\xrightarrow{\GE} \cCMC.$$ By definition, both $i\Lambda \mc S_{H,K}$ and $G\sE(G/H, G/K)$ are the category of spans from $G/H$ to $G/K$ and isomorphisms. Composition was defined in both cases via the explicit model for the pullback given by choosing the subset of the product given by lexicographic ordering. Thus composition maps agree, and the two multifunctors agree.
\end{proof}

\begin{cor}\label{translate_modules}
Suppose $\{ M_H\}$ is a $\GB^\Wald$-module. Then, along the map of spectral categories $\GB \to \GB^\Wald$ from \autoref{GBtoGB}, there is a map of spectral Mackey functors
\[ \xymatrix{ \{K^\Seg(w\Lambda M_H)\} \ar[r] & \{K^\Wald(M_H)\}. } \]
\end{cor}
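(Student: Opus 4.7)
The plan is to apply the Bohmann--Osorno multiplicative comparison (\autoref{bo_mult}) at the level of multifunctors out of the parameter multicategory $\cat M_S$, where $S$ is the set of subgroups of $G$.

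First, by \autoref{multi3}, a $\GB^\Wald$-module $\{M_H\}$ is equivalently encoded as a multifunctor $M\colon \cat M_S \to \Wald_\vee$ whose restriction to $\cat R_S$ is the multifunctor $\mc S$ that defines $\GB^\Wald$ after postcomposition with $K^\Wald$. Explicitly, the value of $M$ at a subgroup $H$ is the underlying Waldhausen category $M_H$, and the multilinear morphisms encode the $\mc S_{H,K}$-actions before passing to $K$-theory. Next, I would whisker $M$ on the source side of the multinatural transformation from \autoref{bo_mult}, which lives between the multifunctors $K^\Wald$ and $K^\Seg\circ w\Lambda$ out of $\Wald_\vee$. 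Since the whiskering of a multinatural transformation by a multifunctor is again a multinatural transformation, we obtain
\[ K^\Seg \circ w\Lambda \circ M \Longrightarrow K^\Wald \circ M \]
as a multinatural transformation of multifunctors $\cat M_S \to \Sp$.

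Translating back via \autoref{multi3}, this is precisely a map of modules over rings on many objects in spectra. By construction, its restriction to $\cat R_S$ is the map of spectral categories $\GB \to \GB^\Wald$ from the proof of \autoref{GBtoGB}. Hence the source is the $\GB$-module $\{K^\Seg(w\Lambda M_H)\}$, the target is the $\GB^\Wald$-module $\{K^\Wald(M_H)\}$, and the transformation is a map of spectral Mackey functors along $\GB \to \GB^\Wald$, as required.

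No serious obstacle arises here; the work is essentially bookkeeping within the multicategorical framework of \autoref{multisec}. The only point worth a moment's care is to ensure that the coproduct choices used to view $M$ as landing in $\Wald_\vee$ rather than $\Wald$ match those used for $\mc S$, and that the weak equivalences on $\Lambda M_H$ restrict to isomorphisms on the $\cat R_S$ part (so that $K^\Seg\circ w\Lambda\circ \mc S$ recovers $\GB$ on the nose as in \autoref{GBtoGB}); both of these are automatic once the entire module has been packaged as a single multifunctor $\cat M_S \to \Wald_\vee$.
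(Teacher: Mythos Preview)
Your proposal is correct and follows exactly the same route as the paper: precompose the Bohmann--Osorno multinatural transformation from \autoref{bo_mult} with the multifunctor $\cat M_S\to\Wald_\vee$ encoding the module, and then reinterpret via \autoref{multi3}. The paper states this in one sentence; your additional remarks about coproduct choices and the restriction to isomorphisms on the $\cat R_S$ part are valid elaborations already absorbed into the proof of \autoref{GBtoGB}.
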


\begin{proof} The statement is  immediate when we view these modules as precomposing the diagram of multifunctors in \autoref{bo_mult} with the multifunctor $\cat M_S\to \Wald_\vee$ which defines the module $\{M_H\}$. 
\end{proof}

\subsection{Equivalences of Waldhausen and Segal $K$-theory} Now we establish when these maps of spectral Mackey functors are equivalences.
By \cite[\S 8]{waldhausen}, the map $K^\Seg(w\Lambda \sC) \to K^\Wald(\sC)$ is an equivalence if $\sC$ is a Waldhausen category with a cylinder functor, satisfying the separation and extension axiom, and which has split cofibrations up to weak equivalence. In \cite{DGM}, the authors provide a simplified argument in the case $\sC$ is an additive category. Unfortunately, these results do not apply to our case of interest. The categories of retractive finite sets and finite $G$-sets do not have a cylinder functor and are not additive. We give a modified version of Waldhausen's argument which we have learned from Badzioch and Dorabia\l{}a\footnote{Private communication}.

\begin{prop}\label{segal_equals_waldhausen}
Suppose $\sC$ is a Waldhausen category with a functorial splitting of cofibrations with respect to weak equivalences, and suppose $\sC$ satisfies the extension axiom. Then the component at $\sC$ of the natural transformation in \autoref{bo_mult}, $ K^\Seg(w\Lambda \sC) \to K^\Wald(\sC)$, is an equivalence.
\end{prop}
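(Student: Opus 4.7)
The plan is to adapt Waldhausen's comparison of the direct-sum and $S_\bullet$ constructions in \cite[\S 1.8]{waldhausen}, using the functorial splitting hypothesis in place of a cylinder functor. I would factor the comparison map through an intermediate ``split'' variant $S^\oplus_\bullet\sC$ of the $S_\bullet$-construction, whose $n$-simplices are filtrations of the form
\[ A_1 \rightarrowtail A_1 \vee A_2 \rightarrowtail A_1 \vee A_2 \vee A_3 \rightarrowtail \cdots \rightarrowtail A_1 \vee \cdots \vee A_n \]
together with compatible choices of the summand decompositions. The first task is the purely formal identification of $|wS^\oplus_\bullet\sC|$ (after group completion) with $K^\Seg(w\Lambda\sC)$ as the bar construction of the symmetric monoidal category $\Lambda\sC$, so that the canonical inclusion $wS^\oplus_\bullet\sC \hookrightarrow wS_\bullet\sC$ realizes the natural transformation of \autoref{bo_mult}.

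The main work is to show that this inclusion becomes a weak equivalence after applying $|w(-)|$. I would construct a functor $\Phi \colon S_n\sC \to S^\oplus_n\sC$ together with a natural weak equivalence $\Phi(A_\bullet) \xrightarrow{\sim} A_\bullet$. Given a filtration $A_1 \rightarrowtail \cdots \rightarrowtail A_n$ with successive cofibers $C_i = A_{i+1}/A_i$, set $\Phi(A_\bullet)_i = A_1 \vee C_1 \vee \cdots \vee C_{i-1}$, and build the structure maps $f_i \colon \Phi(A_\bullet)_i \to A_i$ inductively. Given $f_i$, the functorial splitting applied to the cofibration $A_i \rightarrowtail A_{i+1}$ produces a weak equivalence $A_i \vee C_i \xrightarrow{\sim} A_{i+1}$ relative to $A_i$ and $C_i$, and precomposition with $f_i \vee \id_{C_i}$ defines $f_{i+1}$. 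The extension axiom, applied to the map of cofiber sequences
\[ (\Phi(A_\bullet)_i \rightarrowtail \Phi(A_\bullet)_i \vee C_i \twoheadrightarrow C_i) \longrightarrow (A_i \rightarrowtail A_i \vee C_i \twoheadrightarrow C_i) \]
with outer terms $f_i$ and $\id_{C_i}$, implies inductively that $f_{i+1}$ is again a weak equivalence.

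The remaining task, and the main obstacle, is to verify that $\Phi$ and the comparison maps $f_i$ assemble into a functor of simplicial Waldhausen categories and a simplicial natural transformation. The face and degeneracy operators of $S_\bullet\sC$ act on filtrations by passage to subquotients, omission of a term, and insertion of an identity cofibration; compatibility of $\Phi$ with these operators requires that the \emph{functorial} splittings behave coherently with respect to these operations on the cofibers $C_i$. This is precisely where Waldhausen's original argument invokes the cylinder functor to produce canonical comparison maps; here one must instead track the naturality of the hypothesized splitting under subquotients directly, using that it is functorial with respect to morphisms of cofibrations and preserves weak equivalences. Once this simplicial coherence is established, the standard realization lemma (a natural weak equivalence of exact functors induces a weak equivalence on $|wS_\bullet|$, cf.\ \cite[Prop.\ 1.5.5]{waldhausen}) concludes that $|wS^\oplus_\bullet\sC| \xrightarrow{\sim} |wS_\bullet\sC|$, and hence that the natural transformation in question is an equivalence.
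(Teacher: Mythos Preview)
Your approach differs from the paper's and, as written, leaves the key step unfinished. The paper does not attempt to build a simplicial retraction $\Phi\colon S_\bullet\sC\to S^\oplus_\bullet\sC$. Instead it quotes Waldhausen's reduction criteria \cite[Prop.~1.8.7, 1.8.9]{waldhausen}: the comparison is an equivalence provided that for every $x\in\sC$ the relative construction $|wN^{\Seg}_\bullet(j\colon\sC\to\sC_x)|$ is connected and the map $|wN^{\Seg}_\bullet\sC|\to|wN^{\Seg}_\bullet\sC_x|$ is an equivalence. Connectedness follows from the extension axiom, and the second condition is checked levelwise by producing a homotopy inverse $w\sC_x\to w\sC$, $(x\to a)\mapsto a/x$; the functorial splitting supplies the natural weak equivalence $x\vee a/x\to a$ in $w\sC_x$. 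The whole argument takes place at a single simplicial level, so only naturality of the splitting along \emph{weak equivalences} is ever invoked.

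Your proposal runs into trouble precisely at the point you flag. You describe the simplicial coherence of $\Phi$ as requiring that the splitting be ``functorial with respect to morphisms of cofibrations'', but the hypothesis only gives functoriality along weak equivalences of cofiber sequences, and the face maps $d_j$ in $S_\bullet$ are not weak equivalences. So this route, as stated, cannot be completed under the stated hypotheses. The fix is to observe that you do not need $\Phi$ to be simplicial at all: since the inclusion $wS^\oplus_\bullet\sC\hookrightarrow wS_\bullet\sC$ is already a map of simplicial categories, it suffices to show that for each fixed $n$ the inclusion $wS^\oplus_n\sC\to wS_n\sC$ induces an equivalence on nerves. Your construction of $\Phi_n$ and the natural weak equivalence $f$ do this, using only the functoriality along weak equivalences that the hypothesis actually provides. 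Once you drop the demand for simplicial compatibility, your argument closes; but as written it mislocates the obstacle and appeals to a form of functoriality you do not have.
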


By a functorial splitting, we mean a choice for each cofiber sequence $a \to b \to c$ of a map $c \to b$ such that $c \to b \to c$ is the identity, such that for any weak equivalence of cofiber sequences as indicated on the left, the corresponding square on the right also commutes.
\[ \xymatrix{
	a \ar[d]^-\sim \ar[r] & b \ar[d]^-\sim \ar[r] & c \ar[d]^-\sim && b \ar[d]^-\sim \ar@{<-}[r] & c \ar[d]^-\sim  \\
	a' \ar[r] & b' \ar[r] & c' && b' \ar@{<-}[r] & c'
} \]

\begin{proof}
In this proof, we will freely cite results from \cite{waldhausen}. We start by recalling some notation and definitions from there that we need. For a Waldhausen category $\sC$,  the construction $w N^{Seg}_{\sbt} \sC$ (which Waldhausen denotes $wN.\sC$) is not the nerve, but the $\Gamma$-category construction associated to the symmetric monoidal category ($\sC, \vee)$,  precomposed with $\Delta^{op}$ to get a simplicial category and then forming the nerve only with respect to weak equivalences. 
The geometric realization of this is Segal's construction of the first space in the Segal $K$-theory spectrum. 
By \cite[Prop. 1.8.7.]{waldhausen}, the map $ K^\Seg(w\Lambda \sC) \to K^\Wald(\sC)$ is an equivalence if for every object $x\in \sC$, the realization $|w N^{Seg}_{\sbt}(j\colon \sC \to \sC_x)|$ is contractible, where  $\sC_x$ denotes the category of cofibrant objects under $x$ and $j$ is the functor $b \mapsto x \vee b$. 
 By \cite[Prop. 1.8.9.]{waldhausen} this further simplifies to the statement that the realization $|w N^{Seg}_{\sbt}(j\colon \sC \to \sC_x)|$ is connected and that $|w N^{Seg}_{\sbt}\sC| \to |w N^{Seg}_{\sbt}\sC_x|$ is a homotopy equivalence. To show the first condition holds, we observe that since $\sC$ satisfies the extension axiom, every cofibration $x \to a$ is equivalent to $x \to x \vee a/x$, and this is connected to the initial object $x \to x$, hence the realization $|w N^{Seg}_{\sbt}(j\colon \sC \to \sC_x)|$ is connected.

For the second condition, we show the map is an equivalence on each level in the $N^{Seg}_{\sbt}$ direction. Each level is equivalent to an $n$-fold product of the map $|w_{\sbt} \sC| \to |w_{\sbt} \sC_x|$ so it suffices to show this map is an equivalence. We define a homotopy inverse by the functor $\sC_x \to \sC$ taking $x \to a$ to $a/x$. The composite $w\sC \to w\sC_x \to w\sC$ is clearly isomorphic to the identity, while the composite $w\sC_x \to w\sC \to w\sC_x$ takes $x \to a$ to $x \to x \vee a/x$. By the assumption that cofiber sequences split in a way that is functorial with respect to weak equivalences, this also admits a natural transformation in $w\sC_x$ to the identity. Therefore the map of nerves is a homotopy equivalence.
\end{proof}

\begin{rem}
The hypotheses of \autoref{segal_equals_waldhausen} do not tend to hold in algebraic examples, for instance it fails for the category of abelian groups and isomorphisms. However, it is easily checked to be true in the example of interest to us, namely the category of retractive finite sets or $G$-sets, see \autoref{cofiber_sequences_split}. Essentially, this is because an invertible matrix of the form
\[ \begin{pmatrix}
	A & B \\
	0 & C
	\end{pmatrix} \]
can have $B \neq 0$, but if it is a permutation matrix then $B = 0$. 
\end{rem}


\begin{thm}\label{GB_equivalence}
There is an equivalence of spectral categories $\GB\simeq \GB^\Wald$.
\end{thm}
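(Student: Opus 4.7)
The plan is to combine Corollary \ref{GBtoGB} with Proposition \ref{segal_equals_waldhausen} applied levelwise. I already have from Corollary \ref{GBtoGB} a map of spectral categories $\GB \to \GB^\Wald$ that is the identity on objects. Since a map of spectral categories is a Dwyer--Kan style weak equivalence iff it is the identity (or bijection) on objects and a weak equivalence on each mapping spectrum, it suffices to check that for every pair of subgroups $H,K \leq G$ the induced map
\[ K^\Seg(w\Lambda\, \mc S_{H,K}) \;\longrightarrow\; K^\Wald(\mc S_{H,K}) \]
is a weak equivalence of symmetric spectra. This map is precisely the component at $\mc S_{H,K}$ of the multinatural transformation of \autoref{bo_mult}, so I can invoke Proposition \ref{segal_equals_waldhausen}.

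To apply Proposition \ref{segal_equals_waldhausen}, I need to verify two conditions for the Waldhausen category $\mc S_{H,K}$ of retractive spans from $G/H$ to $G/K$, i.e.\ finite $G$-sets under and over $G/H \times G/K$: the extension axiom, and a functorial splitting of cofibrations with respect to weak equivalences. The extension axiom is immediate: the weak equivalences in $\mc S_{H,K}$ are isomorphisms, and a map of finite retractive $G$-sets that induces isomorphisms on sub and quotient is itself an isomorphism.

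The substantive condition is the functorial splitting. Any cofibration $a \hookrightarrow b$ in $\mc S_{H,K}$ is an injection of finite $G$-sets over and under $G/H \times G/K$, so set-theoretically the complement $b \smallsetminus a$ together with the retract $G/H \times G/K$ provides an isomorphism $b \cong a \vee (b/a)$. The only subtlety is that this splitting must be strictly (not just up to canonical isomorphism) functorial with respect to isomorphisms $b \xrightarrow{\sim} b'$. This is where the lexicographic-ordering conventions from \cite[\S1.1]{GM1} on coproducts and ordered subsets of finite $G$-sets are used: they pin down a strict model for $\vee$ under which the complement operation is a strict functor. The forward reference to \autoref{cofiber_sequences_split} provides this verification in detail.

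The main obstacle, conceptually, is the one flagged in the remark after Proposition \ref{segal_equals_waldhausen}: unlike in typical algebraic settings such as abelian groups (where cofibrations fail to split functorially), the combinatorial rigidity of permutations of finite $G$-sets makes the splitting strictly functorial. Once this is in hand, the theorem follows directly, since the levelwise equivalences on mapping spectra promote the map $\GB \to \GB^\Wald$ to an equivalence of spectral categories.
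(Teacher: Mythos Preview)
Your proposal is correct and follows essentially the same route as the paper: invoke Corollary~\ref{GBtoGB} for the map, then verify the hypotheses of Proposition~\ref{segal_equals_waldhausen} (extension axiom and functorial splitting of cofibrations) for each $\mc S_{H,K}$ to conclude that the map is a levelwise equivalence on mapping spectra. One small imprecision: \autoref{cofiber_sequences_split} is stated for $\Rd_H(X)$ rather than $\mc S_{H,K}$, and the lexicographic-ordering conventions are not actually what make the splitting functorial---the complement map $b\setminus a \hookrightarrow b$ is already strictly natural under isomorphisms of cofiber sequences of finite $G$-sets, with no bookkeeping needed.
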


\begin{proof}From the definition, cofiber sequences in $S_{H,K}$ split functorially with respect to isomorphisms, and satisfy the extension axiom. 
The theorem  follows now immediately from \autoref{GBtoGB} and \autoref{segal_equals_waldhausen}.
 
\end{proof}

By  \cite[6.1]{schwede_shipley_equivalences}, we therefore get the following corollary.

\begin{cor}\label{GBmodules}
There is a Quillen equivalence of categories of modules $$\GB-\Mod\simeq \GB^\Wald-\Mod$$.
\end{cor}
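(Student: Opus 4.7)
The plan is to deduce the equivalence of module categories purely formally from the equivalence of spectral categories established in \autoref{GB_equivalence}, by invoking the Schwede--Shipley theory of Morita equivalences for spectral categories. The guiding principle is that a map of spectral categories which is a Dwyer--Kan-style equivalence (that is, a pointwise stable equivalence of mapping spectra together with an equivalence of homotopy categories induced on objects) induces a Quillen equivalence between the respective module categories.

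First I would set up the model structures on $\GB\text{-}\Mod$ and $\GB^\Wald\text{-}\Mod$, namely the projective/level model structures in which weak equivalences and fibrations are defined objectwise on the underlying spectra; these exist because symmetric spectra form a cofibrantly generated monoidal model category, so the general machinery of \cite{schwede_shipley_equivalences} applies. The map of spectral categories $\GB \to \GB^\Wald$ from \autoref{GBtoGB} is the identity on object sets (both have objects indexed by subgroups of $G$), and by \autoref{GB_equivalence} it is a pointwise stable equivalence on mapping spectra. Hence it qualifies as a \emph{stable equivalence} of spectral categories in the sense of \cite[\S 6]{schwede_shipley_equivalences}.

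Step two is to apply \cite[Thm.~6.1]{schwede_shipley_equivalences}, which asserts precisely that such a stable equivalence $F\colon \sR \to \sR'$ induces a Quillen equivalence
\[ F_!\colon \sR\text{-}\Mod \rightleftarrows \sR'\text{-}\Mod\colon F^* \]
between the associated categories of modules, where $F^*$ is restriction of scalars along $F$ and $F_!$ is its left adjoint given by extension of scalars. Specializing $F$ to the map $\GB \to \GB^\Wald$ yields the desired Quillen equivalence.

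The only subtlety I anticipate is checking that the equivalence of \autoref{GB_equivalence} really is given by a \emph{map} of spectral categories (as opposed to a zigzag), so that Schwede--Shipley applies directly; but this is exactly the content of \autoref{GBtoGB} combined with \autoref{segal_equals_waldhausen}, so there is no genuine obstacle. No further calculation is needed, as the corollary is a formal consequence.
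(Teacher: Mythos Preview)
Your proposal is correct and matches the paper's approach exactly: the paper simply cites \cite[6.1]{schwede_shipley_equivalences} applied to the map of spectral categories from \autoref{GBtoGB}, which is the identity on objects and a levelwise stable equivalence by \autoref{GB_equivalence}. Your additional remarks about the model structures and the restriction/extension adjunction are correct elaborations of what that citation entails.
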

Therefore both of these categories of modules are Quillen equivalent to the category of $G$-spectra, and ``spectral Mackey functors" can mean either.


%

\subsection{Pseudo linear maps of modules}
In the next section we will compare two models of equivariant $A$-theory that are modules over the ring in $\cCMC$ defined by $\GE$. We will not obtain a strict map of $\GE$-modules, but one that instead commutes with the action of $\GE$ up to coherent isomorphism. We capture this formally in the following definition.

Let $S$ be a set, and suppose $R\colon \cat R_S\to \cCMC$ is a multifunctor encoding a ``ring" $R$ of cocartesian monoidal categories with object set $S$, and $M, M'\colon \cat M_S\to \cCMC$ be multifunctors that restricts to $R$ on $S\times S$, encoding  ``modules" over $R$. We use $m$ to denote the multiplication maps for the ring and $a$ to denote the action maps of the ring on the module.

\begin{defn}\label{pseudolinear}
	A \emph{pseudo linear} map of modules $f\colon M\to M'$ is a collection of coproduct-preserving functors $f\colon M_s\to M'_s$, one for each object $s\in S$, and for every pair $s,t \in S$ an invertible natural transformation $\theta_{s,t}$ making the following diagram commute.
	\[\xymatrix{
		R_{s,t}\times M_t \ar[d]_-{\id\times f_t} \ar[r]^-a  \drtwocell<\omit>{<0> \ \ \   \theta_{s,t}}  & M_s \ar[d]^-{f_s} \\%
		R_{s,t}\times M'_t  \arrow[r]^-a & M'_s
	}\]
	Note that this makes $\theta_{s,t}$ is a natural transformation of symmetric monoidal functors, in each slot separately.
	%
	We also require that the isomorphisms $\theta_{-,-}$ respect units and associativity  in the sense that the following diagrams commute. All the unlabeled 2-cells are equalities.\\
	
	(Unit condition: $\theta_{s,s} = \id$)
	\[ \xymatrix @R=1em {
		M_s\ar[rd] \ar[rr]^= \ar[dd]_-{f_s} & &  M_s \ar[dd]^-{f_s} \\
		&  R_{s,s}\times M_s\ar[dd]^-(0.3){f_s}  \xtwocell[dr]{}<\omit>{<0> \, \theta} \ar[ru]_-a &&\\
		M_s'\ar[dr] \ar[rr]^(0.4){=}|\hole &&M'_s\\
		& R_{s,s}\times M_s'   \ar[ru]_-{a} &&
	}\]
	
	(Associativity condition: $\theta_{t,r} \circ \theta_{s,t} = \theta_{s,r}$)
	\[\xymatrix @R=1em @!C=6em {
		R_{s,t}\times R_{t,r}\times M_r \ar[rr]^-{m\times \id} \ar[dr]_-{\id \times a} \ar[dd]_-{1 \times 1 \times f_s} &&
		R_{s,r}\times M_r \ar[dd]|\hole^-(0.3){1 \times f_s} \ar[dr]^-a \xtwocell[dddr]{}<>{\ \ \ \ \theta_{s,r}} \\
		\drtwocell<\omit>{<0> \, \ \ \theta_{t,r}}  & R_{s,t}\times M_t \ar[rr]^(.4)a \ar[dd]^-(0.7){1 \times f_s}   &  
		&   M_s     \ar[dd]^-{f_s} \\
		R_{s,t}\times R_{t,r}\times M'_r  \ar[rr]^(0.65){m\times \id}|\hole \ar[dr]_-{\id \times a}  & \drrtwocell<\omit>{<-1> \,\  \ \theta_{s,t}} &  R_{s,r}\times M'_r \ar[dr]^{a} \\
		& R_{s,t}\times M'_{t} \ar[rr]^-a  & &  M_s'
	}\]
\end{defn}

Next we prove that a pseudo-linear map of modules can be modified up to equivalence to a strict map.

\begin{con}\label{underline}
	Let $R$ be a ring in $\cCMC$ with object set $S$, and let $M$ be a module over $R$. We define a new $R$-module $\underline{M}$ as follows. For $s\in S$, the objects of $\underline{M}_s$ are given by 
	$$\mathrm{ob}\underline{M}_s=\coprod_t \mathrm{ob}R_{s,t}\times \mathrm{ob}M_t, $$ and a morphism $(r,m)\to (r',n)$ for $r \in R_{s,t}$, $r'\in R_{s,p}$ and $m \in M_t$, $n\in M_p$ is given by a morphism $rm\to r'n$ in $M_s$.
\end{con}

\begin{prop}\label{tildefattening}
	Suppose $M$ is a an $R$-module in $\cCMC$. Then $\underline{M}$ is also an $R$-module, and there is an equivalence of $R$-modules $\underline{M} \overset\sim\to M$.
\end{prop}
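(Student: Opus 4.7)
The plan is to build the $R$-module structure on $\underline{M}$ and the equivalence $\pi\colon \underline{M} \to M$ in parallel, exploiting the fact that the morphism structure on $\underline{M}_s$ is by definition literally the morphism structure on $M_s$. Every coherence we need in $\underline{M}$ will therefore be inherited from the corresponding coherence in $M$, and the map $\pi$ will be an equivalence for essentially tautological reasons. Throughout we use that the multifunctors $R\colon \cat R_S \to \cCMC$ and $M\colon \cat M_S \to \cCMC$ respect composition on the nose, which gives strict associativity of the ring multiplication and of the action, together with strict unit laws $1_s \cdot r = r$ and $1_s \cdot m = m$.

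First I would give $\underline{M}_s$ a cocartesian monoidal structure by setting
\[ (r,m) \vee (r',n) \;:=\; (1_s,\; rm \vee_{M_s} r'n), \]
with initial object $(1_s, 0_{M_s})$, and extending to morphisms via the coproduct functor of $M_s$. I would then define the $R$-action $R_{s,t} \times \underline{M}_t \to \underline{M}_s$ on objects by $(r, (r',m)) \mapsto (rr', m)$, using the ring multiplication $R_{s,t} \times R_{t,p} \to R_{s,p}$, and on morphisms by applying the action functor $R_{s,t} \times M_t \to M_s$ to the underlying morphism $r'm \to r''n$ in $M_t$; the output $r(r'm) \to r(r''n)$ equals $(rr')m \to (rr'')n$ by strict associativity, which is exactly the data of a morphism $(rr',m) \to (rr'',n)$ in $\underline{M}_s$. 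Packaging this data into a multifunctor $\underline{M}\colon \cat M_S \to \cCMC$ that restricts to $R$ on $\cat R_S$, and verifying multilinearity of the action, reduces to the multilinearity, associativity, and unit axioms already holding for $R$ and $M$.

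Finally I would define $\pi\colon \underline{M} \to M$ by $(r,m) \mapsto rm$ on objects and by the identity on morphisms. Strict $R$-linearity reduces to the equation $(rr')m = r(r'm)$ and strict preservation of coproducts to $1_s \cdot (rm \vee r'n) = rm \vee r'n$, both instances of the strict laws above. Full faithfulness is built into the definition of $\underline{M}$, and essential surjectivity at each $s$ follows from $1_s \cdot x = x$, so $(1_s, x) \in \underline{M}_s$ maps to $x$ under $\pi$. Hence $\pi$ is a levelwise equivalence in $\cCMC$ and thus an equivalence of $R$-modules. The main thing to be careful about is tracking which axioms must hold strictly versus up to coherent isomorphism, but because every morphism in $\underline{M}$ is literally a morphism in $M$, all needed coherences transport automatically.
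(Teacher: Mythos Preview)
Your proposal is correct and follows essentially the same route as the paper: you define the action by $(r,(r',m))\mapsto (rr',m)$, define the comparison $\pi\colon (r,m)\mapsto rm$, and reduce all coherences to the strict associativity and unitality already present in $M$. The only cosmetic difference is that you write down an explicit coproduct formula $(r,m)\vee(r',n)=(1_s,\,rm\vee r'n)$, whereas the paper simply says $\underline{M}_s$ is cocartesian because it is equivalent to $M_s$; either way, $\pi$ is fully faithful by construction and essentially surjective via $(1_s,x)\mapsto x$, so the argument is the same.
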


\begin{proof}
	Each $\underline{M}_s$ is a cocartesian monoidal category because it is equivalent to $M_s$. We define the action maps $R_{s,t}\times  \underline{M}_t\to\underline{M}_s$ on objects by
	$$(r,\ (s, m))\mapsto (rs, m),$$ where $r\in R_{s,t}$, $s\in R_{t, r}$, $rs\in R_{s,r}$ and $m\in M_r$. This assignment extends on morphisms in the obvious way. It is strictly associative, unital, and coproduct preserving in each slot since the action maps $R_{s,t}\times M_s\to M_t$ are.
%
%

	Note that the functors $\underline{M}_s\to M_s$ by $(s,m)\mapsto sm$ are equivalences of categories by definition, and therefore are coproduct preserving as well. We have a strictly commutative diagram
		\[\xymatrix{
		R_{s,t}\times \underline{M}_t \ar[d] \ar[r]   & \underline{M}_s \ar[d] \\%
		R_{s,t}\times M_t  \arrow[r]& M_s,
	}\] so these equivalences of categories specify  a strict map of $R$-modules.
\end{proof}

\begin{thm}\label{pseudo_can_be_strictified}
	A pseudo linear map of $R$-modules can be modified up to equivalence to a strict map of $R$-modules.
\end{thm}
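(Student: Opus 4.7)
The plan is to use the resolution $\underline{M}\overset\sim\to M$ from \autoref{tildefattening} to absorb the natural isomorphisms $\theta_{s,t}$ into a strict $R$-linear map $\underline{f}\colon \underline{M}\to M'$, so that $f$ becomes equivalent to a strict map of modules after replacing its source with the equivalent $R$-module $\underline{M}$. Morally, $\underline{M}$ was engineered so that its $R$-action is free and strict, which gives us room to prescribe an arbitrary choice of value for $\underline{f}$ on each generating object and have the extension remain strictly $R$-linear.

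Concretely, for each $s\in S$ I would define $\underline{f}_s\colon \underline{M}_s\to M'_s$ on objects by $(\alpha,m)\mapsto \alpha\cdot f_t(m)$, where $\alpha\in R_{s,t}$ and $m\in M_t$, and on a morphism $\phi\colon(\alpha,m)\to(\beta,n)$ in $\underline{M}_s$ (represented by a morphism $\phi\colon \alpha m\to \beta n$ in $M_s$) by the composite
\[ \alpha\cdot f_t(m) \xleftarrow{\theta_{s,t}(\alpha,m)} f_s(\alpha m) \xrightarrow{f_s(\phi)} f_s(\beta n) \xrightarrow{\theta_{s,u}(\beta,n)} \beta\cdot f_u(n). \]
Functoriality, preservation of coproducts, and the comparison of coproducts in source and target all follow from the corresponding properties of $f_s$ together with the naturality of $\theta_{s,-}$ and the fact that it is a monoidal natural transformation in each slot.

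The main content — and the step I expect to be the main obstacle — is checking that $\underline{f}$ is \emph{strictly} $R$-linear. On objects this is immediate from strict associativity of the $R$-action on $M'$:
\[ \underline{f}_s\bigl(\alpha\cdot(\beta,m)\bigr) = (\alpha\beta)\cdot f_u(m) = \alpha\cdot\bigl(\beta\cdot f_u(m)\bigr) = \alpha\cdot\underline{f}_t(\beta,m). \]
On morphisms one must compare two composites involving different arrangements of $\theta$'s; their equality is precisely the associativity coherence $\theta_{t,u}\circ\theta_{s,t} = \theta_{s,u}$ of \autoref{pseudolinear}, interpreted under the strict ring-module associativity used above. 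Strict preservation of the unit action, and of identity morphisms, follows from the unit condition $\theta_{s,s}=\id$. Because these two coherence diagrams were imposed exactly to force these equalities, no further hypotheses are needed.

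Finally, let $q\colon\underline{M}\overset\sim\to M$ be the equivalence of $R$-modules from \autoref{tildefattening}, sending $(\alpha,m)\mapsto \alpha m$. By construction, the components $\theta_{s,t}(\alpha,m)\colon f_s(\alpha m)\to \alpha\cdot f_t(m)$ assemble into a natural isomorphism $\Theta\colon f\circ q \Rightarrow \underline{f}$, and the coherence axioms of \autoref{pseudolinear} ensure that $\Theta$ is compatible with the $R$-actions on source and target. Thus the zigzag $M \xleftarrow[\sim]{q} \underline{M} \xrightarrow{\underline{f}} M'$, together with the isomorphism $\Theta$, exhibits $\underline{f}$ as a strict model for the pseudo linear map $f$, completing the proof.
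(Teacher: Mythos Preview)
Your proof is correct and follows essentially the same strategy as the paper: replace $M$ by $\underline{M}$, define the strictified map using $\theta$ to conjugate $f$ on morphisms, and verify strict $R$-linearity via the associativity coherence of \autoref{pseudolinear}. The one substantive difference is that you map $\underline{M}\to M'$ directly, whereas the paper maps $\underline{f}\colon\underline{M}\to\underline{M'}$ by $(r,m)\mapsto (r,f(m))$; your map is just the paper's $\underline{f}$ followed by the strict equivalence $\underline{M'}\overset\sim\to M'$ of \autoref{tildefattening}, and the key rectangle you must check is literally the same diagram the paper draws. Your route is slightly more direct for the theorem as stated, but the paper's symmetric version (thickening both source and target) is what makes the later naturality statements (\autoref{thickening_preserves_functoriality}, \autoref{natural_thickens_to_natural}) clean to formulate, since one then has $\underline{(-)}$ applied uniformly to all modules in sight.

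Two minor remarks: functoriality of your $\underline{f}$ on identities and composites follows purely from $\theta\circ\theta^{-1}=\id$, not from the unit condition; and your phrasing ``$\theta_{s,s}=\id$'' overstates the unit axiom, which only requires $\theta$ to be the identity at the unit object of $R_{s,s}$. Neither point affects the validity of the argument.
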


\begin{proof}
	We will show that given a pseudo linear map of $R$-modules $f\colon M\to M'$, there is a strict map of $R$-modules $\underline{f}\colon \underline{M}\to \underline{M'}$. On objects, $\underline{f}(r,m)=(r, f(m))$. On morphisms, for a map  $rm\to sn$, we define $\underline{f}$ as the composite
	$$\xymatrix{ rf(m)\ar[r]_-{\cong}^-\theta &f(rm)\ar[r] & f(sn)\ar[r]_-{\cong}^-{\theta^{-1}} &sf(n).}$$
	We claim that for every $s,t\in S$ there are commutative diagrams
			\[\xymatrix{
		R_{s,t}\times \underline{M}_t \ar[d] \ar[r]   & \underline{M}_s \ar[d] \\%
		R_{s,t}\times \underline{M'}_t  \arrow[r]& \underline{M'}_s.
	}\] 
	On objects, it is clear: starting with $r, (s,m)\in R_{s,t}\times \underline{M}_t$, both composites give $rs, f(m)$. On morphisms, given $r_1\to r_2$ in $R_{s,t}$ and $(s_1,m_1)\to (s_2,m_2)$ in $\underline{M}_t$, namely a map $s_1m_1\to s_2m_2$ in $M_t$, the following diagram, where the top represents the top down composite and the bottom represents the down right composite in the diagram above, commutes by the hypotheses for $\theta$ required in \autoref{pseudolinear}. 
	
	 $$\xymatrix{ r_1s_1f(m_1)\ar[r]_-{\cong}^-\theta &f(r_1s_1m_1)\ar[r] & f(r_2s_2m_2)\ar[d]^-\cong_{\theta^{-1}}\ar[r]_-{\cong}^-{\theta^{-1}} &r_2s_2f(m_2)\\
	r_1 s_1f(m_1)\ar[u]_-=\ar[r]_-{\cong}^-\theta&r_1f(s_1m_1)\ar[u]^-\theta_\cong \ar[r] & r_2f(s_2m_2)\ar[r]_-{\cong}^-{\theta^{-1}} &r_2s_2f(m_2)\ar[u]_-=}$$
	Finally, $\underline{f}$ is coproduct-preserving because on the equivalent subcategory on objects of the form $(1,m)$, it is simply $f$, which is assumed to be coproduct-preserving.
\end{proof}

\begin{rem}\label{thickening_preserves_functoriality}
Suppose $f\colon M\to M'$ is actually a strict map of $R$-modules, namely that all $\theta=\id$. Then the definition of $\underline{f}$ restricts to the functor induced by $f$, which on morphisms $rm\to sn$ is just given by $f(rm)\to f(sn)$ since $f$ commutes with the $R$-action. It is also easy to check that this procedure is functorial (preserves compositions and units) if $f$. Therefore if we have a functor from some category to $R$-modules and strict maps, postcomposing with this thickening gives an equivalent functor into $R$-modules and strict maps.
\end{rem}

\begin{rem}\label{different_strictifications}
	In \cite [Proposition 4.13]{CaryMona} we used the $\underline{M}$ construction  to strictify a ``pseudo module" $M$, namely an action of $R$ on $M$ that is only associative and unital up to coherent isomorphism.  	We expect that with more effort one could integrate these strictifications into a single definition, where the multiplication on $R$, the action on $M$, and the map $M \to M'$ all satisfy associativity up to coherent isomorphisms that are suitably compatible, and that a single construction can be shown to  strictify the multiplication, the action maps and the map $M \to M'$ simultaneously.

\end{rem}

Lastly, we check naturality properties of this strictification procedure. 

\begin{df}\label{pseudo_is_natural}
Suppose $f\colon M\to M'$ and $g\colon N\to N'$ are pseudo-linear transformations of $R$-modules, where $R$ is a ring on object set $S$ in $\cCMC$. We say that linear maps of modules $\alpha\colon M\to N$ and $\beta\colon M'\to N'$ specify a  \emph{natural map of pseudo-linear transformations} if the following cube of $2$-cells commutes (the top, bottom, and sides of the cube are equalities, only the front and back have $\theta_{s,t}$).
	\[\xymatrix @R=1em @!C=6em{
		R_{s,t}\times M_t \ar[rr]^-{a} \ar[dr]_-{1\times\alpha} \ar[dd]_-{1 \times  f_t}
		\xtwocell[ddrr]{}<>{<-3>\ \ \ \ \theta_{s,t}} & &
		 M_s \ar[dd]|\hole^-(0.3){ f_s} \ar[dr]^-\alpha \\
		 & R_{s,t}\times N_t \ar[rr]^(.4)a  \xtwocell[ddrr]{}<>{<3>\ \ \ \ \theta_{s,t}}  \ar[dd]^-(0.7){1 \times g_t}  
		  & &   N_s     \ar[dd]^-{g_s} \\
		R_{s,t}\times M'_t  \ar[rr]^(0.65){a}|\hole \ar[dr]_-{1\times \beta}  & &   M'_s \ar[dr]^{\beta} \\
		& R_{s,t}\times N'_{t} \ar[rr]^-a  & &  N_s'
	}\]
	In particular, we have $g_s \alpha=\beta f_s$ for all $s\in S$.
	
\end{df}

\begin{lem}\label{natural_thickens_to_natural}
Suppose that  $f\colon M\to M'$ and $g\colon N\to N'$ are pseudo-linear transformations of $R$-modules and that $\alpha\colon M\to N$ and $\beta\colon M'\to N'$ specify a natural map of pseudo-linear transformations. Then the following diagram of linear maps of modules strictly commutes.
$$\xymatrix{\underline{M} \ar[r]^-{\underline{\alpha}} \ar[d]_-{\underline{f}} & \underline{N} \ar[d]^-{\underline{g}}\\
\underline{M'}\ar[r]^-{\underline{\beta}} &\underline{N'}
}$$
\end{lem}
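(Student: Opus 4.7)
The plan is to unfold the definitions and check strict commutativity directly, on objects and then on morphisms. Recall that since $\alpha$ and $\beta$ are (strict) linear maps of $R$-modules, by \autoref{thickening_preserves_functoriality} the maps $\underline{\alpha}$ and $\underline{\beta}$ are obtained by applying $\alpha$ and $\beta$ ``everywhere'': on objects $(r,m) \mapsto (r,\alpha(m))$, and on a morphism $\phi\colon rm\to sn$ in $M_t$ simply $\alpha(\phi)\colon r\alpha(m)=\alpha(rm)\to \alpha(sn)=s\alpha(n)$ in $N_t$, using strict commutation of $\alpha$ with the $R$-action. In contrast, $\underline{f}$ and $\underline{g}$ are defined as in the proof of \autoref{pseudo_can_be_strictified}, twisting by $\theta_f$ and $\theta_g$ respectively.

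On objects, an object of $\underline{M}_t$ has the form $(r,m)$ with $r\in R_{t,u}$ and $m\in M_u$. Chasing it around the square, one composite gives $(r,g(\alpha(m)))$ and the other gives $(r,\beta(f(m)))$. These are equal because the top and bottom faces of the cube in \autoref{pseudo_is_natural} reduce (on the $M\to N$ and $M'\to N'$ edges in the one-object direction) to $g_u\circ \alpha = \beta\circ f_u$, as already noted in \autoref{pseudo_is_natural}.

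On morphisms, a morphism $(r,m)\to(s,n)$ in $\underline{M}_t$ is a morphism $\phi\colon rm\to sn$ in $M_t$. The composite $\underline{\beta}\circ \underline{f}$ sends it to
$$r\beta f(m)=\beta(rf(m))\xrightarrow{\beta(\theta_f)}\beta f(rm)\xrightarrow{\beta f(\phi)}\beta f(sn)\xrightarrow{\beta(\theta_f^{-1})}\beta(sf(n))=s\beta f(n),$$
while $\underline{g}\circ \underline{\alpha}$ sends it to
$$rg\alpha(m)\xrightarrow{\theta_g}g(r\alpha(m))=g\alpha(rm)\xrightarrow{g\alpha(\phi)}g\alpha(sn)=g(s\alpha(n))\xrightarrow{\theta_g^{-1}}sg\alpha(n).$$
Since $g\alpha=\beta f$, the source, target, and middle arrow match on the nose. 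The remaining point is that $\beta(\theta_f)_{(r,m)}=(\theta_g)_{(r,\alpha(m))}$ (and symmetrically for the inverses). But this is exactly what the cube of $2$-cells in \autoref{pseudo_is_natural} asserts: the two $\theta$-faces (front and back), pasted together along the equality faces expressing strict compatibility of $\alpha,\beta$ with the $R$-action and with $f,g$, yield the same natural transformation. Hence both composites agree as morphisms in $\underline{N'}_t$, and the square commutes strictly.

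The only potential obstacle is bookkeeping: keeping the identifications $\alpha(rm)=r\alpha(m)$, $\beta(rf(m))=r\beta f(m)$, and $g\alpha=\beta f$ clearly in mind while extracting the cube's commutativity as the single equality $\beta(\theta_f)=\theta_g$ (after applying $\alpha$ in the source slot). Once this translation is recorded, no further computation is needed.
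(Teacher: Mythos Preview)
Your proof is correct and follows essentially the same approach as the paper: check commutativity on objects using $g\alpha=\beta f$, then on morphisms write out both composites and use the cube in \autoref{pseudo_is_natural} to identify $\beta(\theta_f)$ with $\theta_g$ at $(r,\alpha(m))$. The paper presents this last step as a commuting rectangle whose outer squares encode exactly that equality, but the content is the same.
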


\begin{proof}
Let $s\in S$. We want to show that the square 
$$\xymatrix{\underline{M_s} \ar[r]^-{\underline{\alpha}} \ar[d]_-{\underline{f}} & \underline{N_s} \ar[d]^-{\underline{g}}\\
\underline{M_s'}\ar[r]^-{\underline{\beta}} &\underline{N_s'}
}$$
commutes. On objects, it commutes by definition, since we are assuming  $g\alpha=\beta f$. For a morphism $\gamma\colon rm\to sn$, we need to show that the following rectangle commutes
$$\xymatrix{rg\alpha(m)\ar[r]^-\theta_-\cong \ar[d]^-= & gr\alpha(m)\ar[r]^-{g\alpha(\gamma)} \ar[d]^-=& gs\alpha(n)  \ar[r]^\theta_-\cong \ar[d]^-= & sg\alpha(n) \ar[d]^=  \\
\beta(rf(m))\ar[r]^-{\beta(\theta)}_-\cong & \beta(f(rm))\ar[r]^-{\beta f(\gamma)} & \beta (f(sn))  \ar[r]^\theta_-\cong & \beta (sf(n))  }$$

The middle square again commutes because we are assuming $g\alpha=\beta f$, and the commutativity of left and right squares follows from the assumed commutativity of the cube in \autoref{pseudo_is_natural}. 
\end{proof}

\section{The map $\Sigma_G^\infty X_+\to \bA_G(X)$}\label{map}

In this section we construct a map of $G$-spectra $\Sigma_G^\infty X_+\rightarrow \bA_G(X)$ that respects the tom Dieck splittings on the fixed points. Since $\bA_G(X)$ arises from a spectral Mackey functor, we do this by constructing a map of spectral Mackey functors.

The main difficulty is that $\bA_G(X)$ arises from a module over $\GB^\Wald$, whereas the suspension spectrum $\Sigma_G^\infty X_+$ is defined in \cite{GM3} as a certain module over $\GB$. We therefore have to define a new module over $\GB^\Wald$ that models $\Sigma_G^\infty X_+$, and map it into $\bA_G(X)$. Defining the module is not so hard, but proving that it models $\Sigma_G^\infty X_+$ takes a fair amount of work, and is the main theorem of this section (\autoref{it_models_suspension_spectrum}). The proof builds on the result from \cite{BD} that identifies the spectra for each value $H$ of the module,
on \autoref{bo_mult} from \cite{bohmann_osorno} and on  the coherence machinery of \autoref{pseudo_can_be_strictified} to relate it back  to the $\GB$-module from \cite{GM3} that models $\Sigma_G^\infty X_+$.

\subsection{Review of the definition of $\bA_G(X)$}\label{AGreview}

We first recall our construction of equivariant $A$-theory $\bA_G(X)$ for a $G$-space $X$. We refer the reader to \cite{CaryMona} for the details of the construction.
\begin{df}\label{retractive_h_spaces}(\cite[Definition 4.13]{CaryMona})
For each subgroup $H \leq G$, let $R_H(X)$ be the category of 
$H$-equivariant retractive spaces $Y$ over $X$. 
The weak equivalences are those inducing weak equivalences on the fixed points $Y^L$ for all $L \leq H$. The cofibrations are the maps $Y \ra Z$ with the $H$-equivariant FHEP: there is an $H$-equivariant, fiberwise retract
\[ Z \times I \ra Y \times I \cup_{Y \times 1} Z \times 1. \]
In particular, when $L \leq H$, the $L$-fixed points of a cofibration are a cofibration in $R(X^L)$. Let $\Rhf_H(X)$ be the subcategory of objects which are homotopy equivalent to finite relative $H$-cell complexes $X\ra Y$.\footnote{We have switched from the upperscript notation $R^H(X)$  from \cite{CaryMona} to the subscript notation $R_H(X)$  in order to match with the notation in \cite{BD}. We have also changed the definition of $\Rhf(X)$ slightly from \cite{CaryMona}, where it had all retracts in the homotopy category of $R(X)$ of finite relative cell complexes $X \ra Y$. It would have been better to call that notion ``homotopy finitely dominated spaces" and denote it $R^{hfd}$. It is easy to check that the results from \cite{CaryMona} apply to homotopy finite spaces in this sense, we only have to skip the step in the argument where we pass to retracts. The inclusion $\Rhf(X) \subseteq R^{hfd}(X)$ also gives an isomorphism on higher $K$-groups, by a cofinality argument, and therefore the statement of \autoref{parametrized_h_cobordism_theorem} is unaffected by which definition we use.} 
\end{df}

The span categories $\mc S_{H,K}$ act on these categories of retractive spaces, in the sense that there are bi-exact functors $R_H(X) \times \mc S_{H,K} \to R_K(X)$, preserving the subcategory of homotopy finite retractive spaces. 

To define the action of a span, we change the categories $R_H(X)$ up to equivalence. For any left $G$-space $X$ we give the category $R(X)$ of non-equivariant retractive spaces a left $G$-action by ``conjugation,'' sending $X \overset{i}\to Y \overset{p}\to X$ to $X \overset{i \circ g^{-1}}\to Y \overset{g \circ p}\to X$. We let $\tG$ be the contractible category with object set $G$, with $G$ acting on the objects by left multiplication. For each $H \leq G$ we construct the homotopy fixed point category
\[ R(X)^{hH} := \Cat(\tG\times G/H, R(X))^G. \]
An object in this category consists of a $G$-equivariant functor $F\colon \tG \times G/H \to R(X)$. For each $(g_1,g_2H) \in \tG \times G/H$, let $F(g_1,g_2H)$ denote the total space $Y$ of the retractive space in the image of this functor. Since the $G$-action on $R(X)$ preserves this object, the $G$-equivariance of $F$ implies that
\[ F(g_1,g_2H) = F(gg_1,gg_2H) \]
for all $g \in G$. (Only the map to $X$ changes.) In particular there is an equality $F(e,H) = F(h,H)$ for all $h \in H$. We can therefore define an action of $h$ on $F(e,H)$ by composing this equality with the action of $F$ on one of the morphisms in $\tG$:
\[ \xymatrix @C=4em{ F(e,H) \ar@{=}[r] & F(h,H) \ar[r]^-{F(h \to e,H)} & F(e,H). } \]
This gives an $H$-action on $F(e,H)$, making it into a retractive $H$-space over $X$.
\begin{lem}\label{functor_to_space}(\cite[Prop 3.1]{CaryMona})
	This gives an equivalence of categories $R(X)^{hH} \simeq R_H(X)$.
\end{lem}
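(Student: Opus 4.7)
The plan is to construct an explicit quasi-inverse $\Psi\colon R_H(X) \to R(X)^{hH}$ to the forward functor $\Phi\colon F \mapsto F(e,H)$ described in the paragraph preceding the lemma, and verify that $\Phi\Psi \cong \id$ and $\Psi\Phi \cong \id$. The conceptual point is that the $G$-action on $\tG \times G/H$ is free and the orbit groupoid is equivalent to $BH$, so that $G$-equivariant functors into the $G$-category $R(X)$ are equivalent to objects of $R(X)$ equipped with an $H$-action by morphisms in $R(X)$ that intertwines the conjugation $G$-action. Unwinding the explicit formula for the conjugation action, namely $g \cdot (X \xrightarrow{i} Y \xrightarrow{p} X) = (X \xrightarrow{i g^{-1}} Y \xrightarrow{gp} X)$, one sees that such data is exactly the structure of an $H$-equivariant retractive space over $X$.

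To define $\Psi$ concretely: given $(Y,i,p) \in R_H(X)$, set $F := \Psi(Y)$ with $F(e,H) := (Y,i,p)$ regarded as a plain retractive space. For each $h \in H$, send the unique morphism $(e,H) \to (h,H)$ in $\tG \times G/H$ to the map $h^{-1}\colon Y \to Y$ given by the $H$-action on $Y$; the $H$-equivariance of $i$ and $p$ is precisely what makes $h^{-1}$ a morphism in $R(X)$ from $(Y,i,p)$ to the conjugated object $(Y, ih^{-1}, hp) = h \cdot (Y,i,p)$. Then extend $F$ to all of $\tG \times G/H$ by $G$-equivariance, using that every object is $G$-translatable to one of the form $(e, g'H)$ and every morphism lies in a unique $G$-orbit of morphisms in the contractible groupoid $\tG$ (for the other cosets $g'H \neq H$, pick representatives and define $F$ using a chosen conjugate of $(Y,i,p)$; the freeness of the $G$-action ensures there is no compatibility obstruction). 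Functoriality of $F$ reduces to the cocycle identity for the $H$-action on $Y$, and $G$-equivariance is built in.

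For full faithfulness of $\Phi$: a morphism $F \to F'$ in $R(X)^{hH}$ is a $G$-equivariant natural transformation, which by freeness of the $G$-action is determined by its component at $(e,H)$. Naturality with respect to the morphisms $(e,H) \to (h,H)$ then translates directly into $H$-equivariance of the underlying map $F(e,H) \to F'(e,H)$, yielding a bijection $\Hom_{R(X)^{hH}}(F,F') \cong \Hom_{R_H(X)}(\Phi F, \Phi F')$. The verification that $\Phi\Psi = \id$ on the nose and $\Psi\Phi \cong \id$ (via the canonical isomorphism reconstructing $F$ from $F(e,H)$ using $G$-equivariance) completes the proof.

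The main obstacle is bookkeeping rather than conceptual: one has to track carefully how the conjugation $G$-action on $R(X)$ interacts with the freeness of the $G$-action on $\tG$, and ensure that the chosen extension of $F$ off the basepoint $(e,H)$ respects composition in $\tG \times G/H$. The use of $\tG$ (rather than its quotient) is essential because it rigidifies these choices so that the only genuine content of the datum $F$ is the retractive space $F(e,H)$ together with its induced $H$-action.
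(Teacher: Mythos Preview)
The paper does not actually prove this lemma; it simply records it with a citation to \cite[Prop.~3.1]{CaryMona} and moves on. Your proposal supplies what the paper omits, and your approach is the standard one underlying that reference: exploit freeness of the $G$-action on $\tG \times G/H$ to reduce a $G$-equivariant functor to its value at $(e,H)$ together with the induced $H$-action, and check that the conjugation $G$-action on $R(X)$ makes this $H$-action precisely the data of an $H$-equivariant retractive space. So there is nothing to compare against, and your argument is correct in outline.

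One point worth tightening: your extension of $\Psi(Y)$ to the components $\tG \times \{g'H\}$ for $g' \notin H$ is described as ``pick representatives and define $F$ using a chosen conjugate,'' but you should say explicitly that the well-definedness of this extension (independence of the coset representative $g'$) is \emph{equivalent} to the $H$-equivariance of $F$ restricted to $\tG \times \{H\}$, since the stabilizer of that component is exactly $H$. This is the substantive check, not just bookkeeping, and it is what makes the functor $\Psi$ land in $G$-equivariant functors rather than merely functors.
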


To each map of finite $G$-sets $p\colon S \to T$ we consider the pullback functor
\[ p^*\colon \Cat(\tG\times T, R(X))^G \longrightarrow \Cat(\tG\times S, R(X))^G \]
that simply composes with $p$, and its left adjoint pushforward or transfer map
\[ p_!\colon \Cat(\tG\times S, R(X))^G \longrightarrow \Cat(\tG\times T, R(X))^G. \]
We define the action of the span $G/H \xleftarrow{p} S \xrightarrow{q} G/K $ on the functor $F \in R(X)^{hH}$ by
\[ F * S := q_!p^*F \in R(X)^{hK}. \]
Explicit formulas for these maps appear in \cite[\S 4]{CaryMona}. In the special case of a span of the form $G/H \xleftarrow{p} G/L\xrightarrow{q} G/K $, this action can be concretely described by the following geometric formulas, see \cite[Prop. 4.14]{CaryMona}:
\begin{itemize}
	\item If $L = K$ and $L \leq H$, the span acts by the functor $p^*\colon R_H(X) \rightarrow R_L(X)$ that restricts the $H$-action to an $L$-action.
	\item If $L = K$ and $L$ is conjugate to $H$, the span acts by a functor $c^*\colon R_H(X) \rightarrow R_L(X)$ that transforms the $H$-action to an $L$-action by conjugating each element of $L$.
	\item If $L = H$ and $L \leq K$, the span acts by the functor $q_!\colon R_L(X) \rightarrow R_K(X)$ that is the left adjoint to restriction. It applies $K \times_L (-)$ to the retractive space $Y$, but collapses the subspace $K \times_L X$ back down to $X$.
\end{itemize} 


Returning to general spans, we use Beck-Chevalley isomorphisms to show that their action is associative and unital up to canonical isomorphism. We then replace $R(X)^{hH}$ with the  thickened version $\underline{R(X)^{hH}}$ as defined in \autoref{underline}. Explicitly, an object is a triple $(J,F,S)$ of a subgroup $J \leq G$, a functor $F \in R(X)^{hJ}$, and a span $S \in \mc S_{J,H}$, morphisms defined as if this object were the object $F * S \in R(X)^{hH}$, cf \autoref{different_strictifications}. The action of spans is defined so that
\begin{equation}\label{2_cell_to_strictified}
\xymatrix @R=2em @C=3em{
	\mc S_{H,K} \times \underline{R(X)^{hH}} \ar[d]_-\sim \ar@{-->}[r] & \underline{R(X)^{hK}} \ar[d]^-\sim \\
	\mc S_{H,K} \times R(X)^{hH} \ar[r] & R(X)^{hK}
}
\end{equation}
commutes up to canonical (Beck-Chevalley) isomorphism. Then the action of spans is strictly associative and unital.

Finally, we restrict to the subcategory $\underline{R(X)^{hH}_{hf}}$ consisting of those objects whose image in $R_H(X)$ is homotopy finitely dominated. Using the above formulas from \cite[Prop. 4.14]{CaryMona}, we directly check that the action of each span preserves this subcategory, so the subcategories also form a module of Waldhausen categories over $\mc S$. Taking Waldhausen $K$-theory gives a module over $\GB^\Wald$, and the associated $G$-spectrum we call $\bA_G(X)$.

\subsection{Homotopy discrete retractive spaces}
Next we introduce the categories of discrete and homotopy discrete retractive spaces.

\begin{defn}\label{discrete}
(Discrete retractive spaces.)
For a $G$-space $X$, let $\Rd_G(X)$ be the subcategory  of $R_G(X)$ of $G$-spaces  $X\xrightarrow{i} Y \xrightarrow{p} X$ of the form $Y= i(X) \sqcup S$ for a finite $G$-set $S$. Morphisms are $G$-maps over and under $X$. A map $Y\rightarrow Y'$ is a weak equivalence if it is an isomorphism and a cofibration if it is injective.\footnote{We note that the category $\Rd_G(X)$, viewed as a permutative category, is called $\sF_G(X)$ in \cite{GM3}.}
\end{defn}

\begin{defn}\label{homotopydiscrete}
(Homotopy discrete retractive spaces.)
For a $G$-space $X$, let $\Rhd_G(X)$ be the subcategory of $R_G(X)$ of retractive $G$-spaces $X\xrightarrow{i} Y \xrightarrow{p} X$ of the form $Y=Y_X \sqcup  \bigsqcup_{i=1}^n (D_i \times G/H) $, where the $D_i$ are contractible spaces with trivial action and $G$ acts diagonally on $D_i\times G/H$, such that
\begin{itemize}
	\item $i(X)\subseteq Y_X$,
	\item the inclusion $i(X) \to Y_X$ is a $G$-homotopy equivalence,
	\item and the map $i\colon X\rightarrow Y$ has the $G$-homotopy extension property (HEP).  
\end{itemize}
Morphisms are $G$-maps over and under $X$. A map $Y\rightarrow Y'$ is a weak equivalence if it is a weak $G$-homotopy equivalence, as in \autoref{retractive_h_spaces}.
A map is a cofibration if it is has the $G$-HEP and is injective on $\pi_0$.\footnote{The category $\Rhd_G(X)$ is called $\sF_G(X)$ in \cite{BD}. This and the previous footnote show why we stayed away from the name $\sF_G(X)$.}
\end{defn}

When $G = 1$ we drop it from the notation. The categories $\Rd_G(X)$ and $\Rhd_G(X)$ are Waldhausen and the inclusions $\Rd_G(X) \to \Rhd_G(X) \to \Rhf_G(X) \to R_G(X)$ are exact.

\begin{rem}
	In the definition of $\Rhd$, we cannot insist that $Y_X$ is always isomorphic to $X$, i.e. the retractive spaces are all of the form $i(X) \sqcup  \bigsqcup_{i=1}^n D_i $, because then the pushout axiom for a Waldhausen category would fail.
\end{rem}

Now for each $H \leq G$, pull back these full subcategories and their Waldhausen structures along the equivalences of categories   
\[ \xymatrix{ \underline{R(X)^{hH}} \ar[r]^-\sim & R(X)^{hH} \ar[r]^-\sim & R_H(X) } \]
to get new Waldhausen categories
\[ R(X)^{hH}_\delta \subseteq R(X)^{hH}_{h\delta} 
\subseteq R(X)^{hH}_{hf} \subseteq R(X)^{hH}, \]
\begin{equation}\label{strictified_inclusions}
\underline{R(X)^{hH}_\delta} \subseteq \underline{R(X)^{hH}_{h\delta}} 
\subseteq \underline{R(X)^{hH}_{hf}} \subseteq \underline{R(X)^{hH}}.
\end{equation}
In the discrete case, this subcategory can also be described as the homotopy fixed points of $\Rd(X)$:
\begin{lem}\label{Rdhtpyfixedpts}
	For each $G$-space $X$, there is an equivalence of Waldhausen categories $\Rd(X)^{hH}\simeq \Rd_H(X)$.
\end{lem}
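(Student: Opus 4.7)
The plan is to restrict the equivalence of categories $R(X)^{hH} \simeq R_H(X)$ from \autoref{functor_to_space} to the full subcategories $\Rd(X)^{hH}$ and $\Rd_H(X)$, and then to verify that the Waldhausen structures match under this restriction. Recall that under \autoref{functor_to_space}, a $G$-equivariant functor $F\colon \tG \times G/H \to R(X)$ is sent to the retractive $H$-space $F(e, H)$, whose $H$-action comes from combining the equality $F(h,H) = F(e,H)$ with the morphism $F(h \to e, H)$ in $\tG$.

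First I would check that the equivalence restricts to a functor $\Rd(X)^{hH} \to \Rd_H(X)$. If $F$ is in $\Rd(X)^{hH}$, then $F(e,H)$ is a non-equivariantly discrete retractive space, say $i(X) \sqcup S$ for some finite set $S$. Since the $G$-action on $R(X)$ is by conjugation, it does not change the underlying space or the decomposition into $i(X)$ and $S$; only the parametrization of $i(X)$ by $X$ changes. Consequently, the induced $H$-action on $F(e,H)$ restricts to the given $H$-action on $X$ and permutes $S$, making $S$ into a finite $H$-set. Therefore $F(e,H) \in \Rd_H(X)$.

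For essential surjectivity, given $Y = X \sqcup S \in \Rd_H(X)$, the same construction used in the proof of \autoref{functor_to_space} to produce an $F$ realizing $Y$ manifestly lands in $\Rd(X)$ at every point, since conjugation by elements of $G$ keeps the total space of the form $i'(X) \sqcup S$. Full faithfulness is inherited from \autoref{functor_to_space} because $\Rd(X)^{hH} \subseteq R(X)^{hH}$ and $\Rd_H(X) \subseteq R_H(X)$ are full subcategories.

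Finally I would verify that cofibrations and weak equivalences correspond. On $\Rd_H(X)$, weak equivalences are $H$-equivariant isomorphisms and cofibrations are $H$-equivariant injections. On $\Rd(X)^{hH}$, these classes are inherited by pullback from the Waldhausen structure on $\Rd(X)$, so a morphism $F \to F'$ is a weak equivalence (resp. cofibration) iff each $F(g_1,g_2H) \to F'(g_1,g_2H)$ is an isomorphism (resp. injection). By $G$-equivariance this is equivalent to the condition at the single object $(e,H)$, which matches the condition on the corresponding morphism $F(e,H) \to F'(e,H)$ in $\Rd_H(X)$. Thus the equivalence of \autoref{functor_to_space} restricts to an equivalence of Waldhausen categories. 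The only point requiring a small amount of care, as noted above, is that the conjugation $G$-action on $R(X)$ does not disturb the ``discrete summand'' $S$, so that the $H$-action genuinely makes it into a finite $H$-set; everything else is essentially formal.
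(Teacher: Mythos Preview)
The paper does not give a proof of this lemma; it is stated immediately after the definition of $R(X)^{hH}_\delta$ and treated as evident from \autoref{functor_to_space}. Your argument correctly supplies the missing details: restricting the equivalence of \autoref{functor_to_space} to the discrete subcategories, observing that the $H$-action induced on $F(e,H)=X\sqcup S$ is by isomorphisms and hence must preserve the finite set $S$, and checking that isomorphisms and injections (the weak equivalences and cofibrations) are detected at the single object $(e,H)$ by $G$-equivariance. This is exactly the kind of verification the paper implicitly leaves to the reader.
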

In general, however, performing these operations in different orders gives different categories. For instance, $(\Rhd(X))^{hH}$ differs from $R(X)^{hH}_{h\delta}$ because their objects have different finiteness conditions and different Waldhausen structures (weak equivalences and cofibrations). This is the main reason why the following proof and the proof of \cite[Prop. 4.17]{CaryMona} are done ``by hand.''
%
%


\begin{prop}\label{inclusions}
The exact inclusions of Waldhausen categories in \autoref{strictified_inclusions} give maps of $\mc S$-modules in Waldhausen categories, therefore after applying $K^\Wald$ they give maps of $\GB^\Wald$-modules.
\end{prop}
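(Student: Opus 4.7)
The plan is to reduce the assertion to the purely topological statement that each of the three subcategories $\Rd_H(X)$, $\Rhd_H(X)$, $\Rhf_H(X) \subseteq R_H(X)$ is preserved under the action of spans in $\mc S_{H,K}$, transported via the equivalences $\underline{R(X)^{hH}} \simeq R(X)^{hH} \simeq R_H(X)$ used to define the subcategories. Once this closure is established, the subcategories $\underline{R(X)^{hH}_\delta}$, $\underline{R(X)^{hH}_{h\delta}}$, $\underline{R(X)^{hH}_{hf}}$ are automatically preserved under the $\mc S$-action on $\underline{R(X)^{hH}}$ defined in \autoref{AGreview}, so the three inclusions are tautologically \emph{strict} maps of $\mc S$-modules in Waldhausen categories, and multifunctoriality of $K^\Wald$ then yields the corresponding maps of $\GB^\Wald$-modules.

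Preservation for $\Rhf_H(X)$ is already established in \cite[Prop. 4.17]{CaryMona}, so I would focus on the discrete and homotopy discrete cases. Since every span $G/H \leftarrow S \rightarrow G/K$ decomposes over its $G$-orbits into spans of the form $G/H \leftarrow G/L \rightarrow G/K$, and the ambient $\mc S$-action respects disjoint unions up to Beck-Chevalley isomorphism, it suffices to check closure under the three basic operations identified in \cite[Prop. 4.14]{CaryMona}: restriction $p^*$, conjugation $c^*$, and induction $q_!$. Restriction and conjugation do not alter the underlying retractive space, only the group that acts on it, so they clearly preserve both $\Rd$ and $\Rhd$. For induction, if $Y = i(X) \sqcup S$ is in $\Rd_L(X)$, then $q_! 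Y \cong X \sqcup (K \times_L S)$ still has the required form. In the homotopy discrete case, $q_!$ applied to $Y_X \sqcup \bigsqcup_i D_i \times G/H'$ yields a retractive $K$-space whose ``discrete part'' is $\bigsqcup_i D_i \times (K \times_L G/H')$, which decomposes into orbits of the required shape after choosing an orbit decomposition of the finite $K$-set $K \times_L G/H'$.

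The main technical obstacle I expect is verifying the induction case for $\Rhd$ on the ``fat part'' $Y_X$: after applying $K \times_L (-)$ and collapsing $K \times_L X$ onto $X$, one must check that the resulting retractive $K$-space is still $G$-homotopy equivalent to $X$ via a $G$-HEP inclusion. This should follow from standard gluing properties, since the collapse is a pushout along the cofibration $K \times_L X \hookrightarrow K \times_L Y_X$, which is a $G$-homotopy equivalence because $X \hookrightarrow Y_X$ was one with the $G$-HEP. Once these closure properties are in hand, exactness of the inclusions is immediate from the definitions of cofibrations and weak equivalences on the subcategories, and applying $K^\Wald$ to the three strictly commutative squares of $\mc S$-modules completes the proof.
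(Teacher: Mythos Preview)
Your proposal is correct and follows essentially the same route as the paper: reduce to showing the span action preserves the subcategories $\Rd_H(X)$ and $\Rhd_H(X)$, decompose an arbitrary span into orbits, and check the three elementary cases $p^*$, $c^*$, $q_!$ from \cite[Prop.~4.14]{CaryMona}, with the induction case for $\Rhd$ requiring the observation that $q_!$ preserves equivariant cofibrations and homotopy equivalences on the $Y_X$ part. One small slip: in your induction paragraph you write ``$G$-homotopy equivalent'' and ``$G$-HEP'' for the resulting retractive $K$-space, but these should be $K$-equivariant (the target category is $R_K(X)$); similarly the orbits should be written as $H$-orbits $H/H_\alpha$ and $K$-orbits $K/K_\alpha$ rather than $G/H'$, but these are cosmetic and the argument goes through.
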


\begin{proof}
It is enough to show that for a span $G/H \xleftarrow{p} S\xrightarrow{q} G/K $, the action
\[ (-*S)\colon R_H(X) \rightarrow R_K(X), \]
when restricted to $\Rhd_H(X)$ lands in $\Rhd_K(X)$, and similarly for $\Rd_H(X)$ and $\Rd_K(X)$. We will only discuss the case of $\Rhd$ in detail, because the argument for $\Rd$ is similar and much simpler.
As in \cite[Prop. 4.14]{CaryMona}, it is enough to consider the case where $S$ is an orbit $G/L$, and then this further breaks down into just three cases. 

{\bf Case 1.} $L = K$ and $L \leq H$. The span acts by the functor $p^*\colon R_H(X) \rightarrow R_L(X)$ that restricts the $H$-action to the action of $L$. When we restrict the $H$-action on an object of the form  $Y=Y_X \sqcup  \bigsqcup (D_i \times H/H_\alpha)$, since the orbits $H/H_\alpha$ are isomorphic as $L$-sets to disjoint unions of orbits  $L/L_\alpha$, we clearly get an $L$-space of the form $Y= Y_X \sqcup  \bigsqcup (D_i \times L/L_\alpha)$, where  $i\colon X \leftrightarrows Y_X \colon\! \!  r$ are inverse $L$-homotopy equivalences,  and an $H$-cofibration $X\rightarrow Y$ is an $L$-cofibration for any $L\leq H$. Therefore $p^*$ restricts to a functor $\Rhd_H(X) \to \Rhd_L(X)$.

{\bf Case 2.} $L = K$ and $L' = H$ are conjugate. Fix a $g$ such that $L' = gLg^{-1}$ and let $c\colon G/L \congar G/L'$ be the isomorphism of $G$-sets given by $hL \mapsto hg^{-1}L'$. Then the span acts by the functor $c^*\colon R_{L'}(X) \rightarrow R_L(X)$ sending the $L'$-equivariant retractive space $(Y,i,p)$ to the retractive space $(Y,i \circ g,g^{-1} \circ p)$, with each element $\ell \in L$ acting on $Y$ though the group isomorphism $L \to L'$, $\ell \mapsto g\ell g^{-1} \in L'$. Again, when we change the $L'$-action on a  space $Y$ of the form $Y=Y_X \sqcup  \bigsqcup (D_i \times L'/L'_\alpha)$ in $\Rhd_{L'}(X)$ to this $L$-action, we get an $L$-space of the form $Y=Y_X \sqcup  \bigsqcup (D_i \times L/L_\alpha)$. We deduce that the inclusion $(i\circ g^{-1})(X)\subseteq Y_X$ is an $L$-equivariant homotopy equivalence with the $L$-equivariant homotopy extension property from the fact that the same is true for $i(X) \subseteq Y_X$ with $L'$. Therefore $c^*$ restricts to a functor $c^*\colon \Rhd_{L'}(X) \rightarrow \Rhd_L(X)$.

{\bf Case 3.} $L \leq K$ and $L = H$. The span acts by the functor $q_!\colon R_L(X)\rightarrow R_K(X)$ that on each retractive $L$-equivariant space $Y$ is the pushout \[ \xymatrix{
K \times_L Y \ar[r] & q_!Y \\
K \times_L X \ar[r] \ar[u] & X. \ar[u] } \] 

\noindent If $Y=Y_X \sqcup  \bigsqcup (D_i \times L/L_\alpha)$ then
\begin{eqnarray*}
	K\times_L Y &\cong& \left( K\times_L Y_X \right)  \sqcup  \bigsqcup (D_i\times K/L_\alpha), \\
	q_!Y &\cong& \big(\left( K\times_L Y_X \right) \cup_{(K\times_L X)} X\big)\sqcup  \bigsqcup (D_i\times K/L_\alpha) \\
	&\cong& q_!Y_X\sqcup  \bigsqcup (D_i\times K/L_\alpha) \\
\end{eqnarray*}
Since $K\times_L -$ preserves equivariant cofibrations and homotopy equivalences, so does $q_!$, and therefore $X \to q_!Y_X$ is an equivariant cofibration and homotopy equivalence. Therefore $q_!$ restricts to a functor $q_!\colon \Rhd_L(X) \rightarrow \Rhd_K(X)$.
%
 \end{proof}

The above constructions are all functorial in $X$. As for ordinary $A$-theory, this requires a careful definition, see for instance \cite[Rem 3.5]{raptis_steimle}.
\begin{df}\label{functoriality_definition} 
	Fix a set $U$ of size $2^{|\R|}$ and declare that, by definition, each retractive space $Y$ over $X$ has as its underlying set $X \sqcup S$ where $S \subseteq U$. To each $G$-equivariant map $f\colon X \to X'$ of base spaces, define the pushforward functor $f_!\colon R(X) \to R(X')$ by taking each space $Y = X \sqcup S$ to the pushout $Y \cup_X X'$, with underlying set $X' \sqcup S$, and projection to $X'$ by $S \to X \overset{f}\to X'$.
\end{df}
\begin{lem}
	This construction is strictly functorial, i.e. $(f' \circ f)_! = f'_!f_!$, and strictly $G$-equivariant, i.e. $g \circ f_! = f_! \circ g$ for all $g \in G$.
\end{lem}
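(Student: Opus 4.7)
The plan is to verify both claims by a direct unwinding of \autoref{functoriality_definition}. The crucial feature of that definition is the choice to fix the underlying set of each retractive space as $X \sqcup S$ with $S \subseteq U$, which removes the usual ambiguity in choosing a set-theoretic representative for the pushout $Y \cup_X X'$. This rigidification is precisely what forces the pushforward to be strictly functorial rather than merely well-defined up to canonical isomorphism, so the proof essentially amounts to bookkeeping; there is no substantive obstacle.

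For strict functoriality, given $Y = X \sqcup S$ with retraction $p\colon X \sqcup S \to X$ restricting to some $p|_S\colon S \to X$, I would unpack both $(f' \circ f)_!(Y)$ and $f'_!(f_!(Y))$. The former has underlying set $X'' \sqcup S$, with retraction given by the identity on $X''$ together with $(f' \circ f) \circ p|_S$ on $S$. The latter is formed by first producing $f_!(Y) = X' \sqcup S$ with retraction $f \circ p|_S$ on $S$, and then $f'_!(f_!(Y)) = X'' \sqcup S$ with retraction $f' \circ (f \circ p|_S)$ on $S$. Associativity of composition gives equality on the nose. On morphisms, a map $Y \to Y'$ in $R(X)$ is determined by its restriction $S \to X \sqcup S'$, and $f_!$ acts by post-composing the $X$-valued component with $f$; thus both orders of iteration induce the same action on morphisms, and identities and composition are preserved for the same reason.

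For strict $G$-equivariance, recall that $g \in G$ acts on $R(X)$ by sending $(X \xrightarrow{i} Y \xrightarrow{p} X)$ to $(X \xrightarrow{i \circ g^{-1}} Y \xrightarrow{g \circ p} X)$, leaving the underlying set of $Y$ unchanged. Consequently $f_!(g \cdot Y)$ and $g \cdot f_!(Y)$ both have underlying set $X' \sqcup S$. The retraction of $g \cdot f_!(Y)$ restricts on $S$ to $g \circ f \circ p|_S$, while that of $f_!(g \cdot Y)$ restricts to $f \circ g \circ p|_S$; these coincide because $f$ is $G$-equivariant. The inclusion pieces agree for the same reason, and since the $G$-action on morphisms only relabels source and target structure maps without touching the underlying set-level function, equivariance extends routinely to morphisms. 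The $G$-equivariance of $f$ is used in exactly one place, to commute $f$ past $g$, and this is the only nontrivial input.
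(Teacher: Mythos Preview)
The paper states this lemma without proof, treating it as a routine consequence of the point-set conventions in \autoref{functoriality_definition}; your direct unwinding is exactly the implicit argument. The only omission is that you do not explicitly verify the pushout \emph{topologies} (not just the underlying sets and structure maps) coincide, but this follows by the same bookkeeping once you trace the quotient maps through.
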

As a result the homotopy fixed point categories $R(X)^{hH}$ are also functorial in $X$, by composing each $G$-equivariant functor $F\colon \tG \times G/H \to R(X)$ with the $G$-equivariant functor $f_!\colon R(X) \to R(X')$.
\begin{rem}
	In this paper the notation $(-)_!$ refers to two different kinds of pushforward functors, a pushforward $p_!$ on fixed point categories $R(X)^{hH}$ when $p$ is a map of finite $G$-sets $G/H \to G/K$, and a pushforward $f_!\colon R(X)\to R(X')$ when $f$ is a map of $G$-spaces $X \to X'$. The meaning should be clear from which map is being shrieked, but we will reinforce this distinction by only using $f$ to refer to maps of base spaces, never to maps of finite $G$-sets.
\end{rem}
For each map of finite $G$-sets $p\colon S \to T$, the operation $f_!$ strictly commutes with $p^*$, and it strictly commutes with $p_!$ as soon as we fix for each $t \in T$ an injection $\bigsqcup_{p^{-1}(t)} U \to U$, so that we know how to embed disjoint unions along $X$ into $X \sqcup U$. It furthermore preserves maps between spans, so that the following square of categories strictly commutes.
\[\xymatrix @R=2em @C=3em{
	\mc S_{H,K} \times R(X)^{hK}  \ar[d]_-{\id\times f_!} \ar[r]   \drtwocell<\omit>{<0> \ \ =}  & R(X)^{hH} \ar[d]^-{f_!} \\
	\mc S_{H,K} \times R(X')^{hK}  \ar[r]& R(X')^{hH}
}\]
We define $f_!$ on the strictifications $\underline{R(X)^{hH}}$ by sending the object $(J,F,S)$ to $(J,f_! \circ F,S)$, and defining it on morphisms so that following square commutes.
\[\xymatrix @R=2em @C=3em{
	\underline{R(X)^{hH}} \ar[d]_-\sim \ar@{-->}[r]^-{f_!} & \underline{R(X')^{hH}} \ar[d]^-\sim \\
	R(X)^{hH} \ar[r]^-{f_!} & R(X')^{hH}
}\]
We then check that the square
\[\xymatrix @R=2em @C=3em{
	\mc S_{H,K} \times \underline{R(X)^{hK}}  \ar[d]_-{\id\times f_!} \ar[r]   \drtwocell<\omit>{<0> \ \ =}  & \underline{R(X)^{hH}} \ar[d]^-{f_!} \\
	\mc S_{H,K} \times \underline{R(X')^{hK}}  \ar[r]& \underline{R(X')^{hH}}
}\]
commutes. On objects this is immediate. On morphisms in the $\underline{R(X)^{hK}}$ direction, as in part 7 of the proof of \cite[Prop 4.11]{CaryMona}, it boils down to the commutativity of
\begin{equation}\label{BC_commutes_with_functoriality}
\xymatrix{
	f_! \circ (S * T) * F \ar@{=}[d] \ar[r]^-{\textup{BC}}_-\cong & f_! \circ (S * (T * F)) \ar@{=}[d] \\
	(S * T) * (f_! \circ F) \ar[r]^-{\textup{BC}}_-\cong & S * (T * (f_! \circ F)),
}
\end{equation}
and on morphisms in the $S_{H,K}$ direction it similarly boils down to checking that $f_!$ commutes with the map $(S * T) * F \to (S' * T) * F$ for a map of spans $S \to S'$. In both cases the equalities follow because $f_!$ only changes the basepoint section, and doesn't interfere with the summands that the other operations are re-arranging.
This finishes the proof of the following proposition.
\begin{prop}\label{all_natural_in_X}
	Each of the constructions from this section defines a spectral Mackey functor that is functorial in the $X$ variable.
\end{prop}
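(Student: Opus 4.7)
The plan is to leverage the strict compatibility formulas already established in the paragraphs preceding the statement. Those paragraphs have defined $f_!$ on $R(X)$, lifted it to $R(X)^{hH}$ and to $\underline{R(X)^{hH}}$, and verified that it commutes strictly with both restriction $p^*$ and transfer $p_!$, and hence with the action of spans via the Beck--Chevalley identification \autoref{BC_commutes_with_functoriality}. What remains is to check that $f_!$ restricts to the four subcategories in \autoref{strictified_inclusions}, that the inclusions between them are natural in $X$, and that applying $K^\Wald$ produces the asserted maps of $\GB^\Wald$-modules.

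First I would verify that $f_!\colon R(X)\to R(X')$ preserves each of $\Rd$, $\Rhd$, and $\Rhf$. For $\Rd$, the convention of \autoref{functoriality_definition} sends the underlying set $X\sqcup S$ to $X'\sqcup S$, so discreteness is preserved on the nose. For $\Rhd$, given an object $Y = Y_X\sqcup\bigsqcup_i (D_i\times G/H_i)$, the pushout $f_!Y$ alters only the basepoint summand, giving $(f_!Y_X)\sqcup\bigsqcup_i(D_i\times G/H_i)$; since pushout along a closed $G$-cofibration preserves $G$-homotopy equivalences and the $G$-HEP, the required conditions on $X'\to f_!Y_X$ follow from those on $X\to Y_X$. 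For $\Rhf$, the argument is standard and is already invoked in \cite{CaryMona}.

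Pulling these preservation statements back along the equivalences $\underline{R(X)^{hH}}\simeq R(X)^{hH}\simeq R_H(X)$ yields a functor $f_!$ between each pair of corresponding subcategories. The strict commutation square with the span action established just before the statement restricts verbatim, so for every $H$ we obtain maps of $\mc S$-modules in Waldhausen categories that are natural in $X$. The inclusions in \autoref{strictified_inclusions} are inclusions of subcategories and therefore commute with $f_!$ on the nose; by \autoref{inclusions} they are also maps of $\mc S$-modules, so together they form natural transformations of $\mc S$-modules in $X$.

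Finally, since $K^\Wald\colon\Wald\to\Sp$ is a multifunctor, postcomposition carries these natural families of multifunctors $\cat M_S\to\Wald$ to natural families of multifunctors $\cat M_S\to\Sp$, i.e.\ to diagrams of $\GB^\Wald$-modules natural in $X$. The only real content is the verification that $f_!$ preserves the structural decomposition together with the equivariant homotopy equivalence and HEP characterizing $\Rhd$; everything else reduces to unpacking the strict naturality formulas already in hand and applying the multifunctor $K^\Wald$.
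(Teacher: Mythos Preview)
Your proposal is correct and follows essentially the same approach as the paper. The paper's proof is the discussion in the paragraphs immediately preceding the proposition (ending with ``This finishes the proof of the following proposition''), which establishes the strict compatibility of $f_!$ with the span action on $\underline{R(X)^{hH}}$ via \eqref{BC_commutes_with_functoriality}. You add the explicit verification that $f_!$ preserves the subcategories $\Rd$, $\Rhd$, and $\Rhf$---a detail the paper leaves implicit---but otherwise the arguments coincide.
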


\subsection{$\Sigma^\infty_G X_+$ as a spectral Mackey functor} We have constructed a map of spectral Mackey functors
\[ \left\{ K^\Wald\left(\underline{R(X)^{hH}_{h\delta}}\right) \right\} \rightarrow \left\{ K^\Wald\left(\underline{R(X)^{hH}_{f}}\right) \right\}, \]
inducing a map from some $G$-spectrum into $\bA_G(X)$. So it remains to prove that this $G$-spectrum is $\Sigma^\infty_G X_+$:
\begin{thm}\label{it_models_suspension_spectrum}
	The $G$-spectrum modeled by $\left\{ K^\Wald\left(\underline{R(X)^{hH}_{h\delta}}\right) \right\}$ is naturally equivalent to $\Sigma^\infty_G X_+$, as a functor from $G$-spaces to $G$-spectra.
\end{thm}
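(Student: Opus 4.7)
The plan is to construct a natural zig-zag of equivalences of spectral Mackey functors connecting $\{K^\Wald(\underline{R(X)^{hH}_{h\delta}})\}$ with the $\GB$-module used in \cite{GM3} to define $\Sigma_G^\infty X_+$. The strategy uses the three ingredients flagged in the introduction to this section: the pointwise identification from \cite{BD}, the multiplicative Waldhausen--Segal comparison \autoref{bo_mult}, and the strictification of pseudo-linear module maps from \autoref{pseudo_can_be_strictified}.

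First, I would apply \autoref{bo_mult} to the multifunctor $\cat M_S \to \Wald_\vee$ whose value on $H$ is $\underline{R(X)^{hH}_{h\delta}}$. By \autoref{translate_modules} this produces a natural map of spectral Mackey functors
\[
\left\{ K^\Seg\bigl(w\Lambda \underline{R(X)^{hH}_{h\delta}}\bigr) \right\} \longrightarrow \left\{ K^\Wald\bigl(\underline{R(X)^{hH}_{h\delta}}\bigr) \right\}.
\]
To check that this is a levelwise equivalence, I would verify the hypotheses of \autoref{segal_equals_waldhausen}: the extension axiom holds because weak equivalences are detected on fixed points, and a functorial splitting of cofibrations up to weak equivalence is provided by the observation that any cofibration of homotopy discrete retractive spaces $Y \hookrightarrow Z$ is naturally weakly equivalent to the summand inclusion $Y \hookrightarrow Y \cup_X (Z/Y)$, with compatibility along weak equivalences controlled by the homotopy discreteness of the objects.

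Next, the inclusion $\underline{R(X)^{hH}_\delta} \hookrightarrow \underline{R(X)^{hH}_{h\delta}}$ is a strict map of $\GB^\Wald$-modules by \autoref{inclusions}, and induces a levelwise equivalence on Segal $K$-theory because every $Y = Y_X \sqcup \bigsqcup (D_i \times H/H_\alpha)$ admits a natural weak equivalence to the discrete object $X \sqcup \bigsqcup H/H_\alpha$ obtained by collapsing $Y_X$ onto $X$ and each contractible $D_i$ to a point. Finally, I would identify $\{K^\Seg(w\Lambda \underline{R(X)^{hH}_\delta})\}$ with the $\GB$-module of \cite{GM3}: by \autoref{Rdhtpyfixedpts} the underlying category is equivalent to $\Rd_H(X)$, which viewed as a permutative category under disjoint union is precisely the category used in \cite{GM3} to construct $\Sigma_G^\infty X_+$. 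Under this equivalence, our span action agrees with the action of \cite{GM3} only up to canonical Beck--Chevalley isomorphism, exhibiting the identification as a pseudo-linear map of $\GE$-modules in the sense of \autoref{pseudolinear}. Applying \autoref{pseudo_can_be_strictified} together with \autoref{natural_thickens_to_natural} then rectifies this into a strict equivalence of $\GE$-modules natural in $X$; postcomposing with $K^\Seg$ gives the desired equivalence of $\GB$-modules. Concatenating the three equivalences proves the theorem.

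The chief obstacle is the pointwise identification of the Segal $K$-theory of $\underline{R(X)^{hH}_\delta}$ with $(\Sigma_G^\infty X_+)^H$, which amounts to a parametrized equivariant Barratt--Priddy--Quillen computation; this is the input I would borrow from \cite{BD}, whose fixed-point analysis identifies the underlying spectra. The novel contribution of the argument is not this pointwise identification but the assembly: one must carefully track the Beck--Chevalley $2$-cells through the thickening construction $\underline{(-)}$ and the strictification of \autoref{pseudo_can_be_strictified} in order to produce a single coherent equivalence that is simultaneously compatible with the span action on every subgroup and natural in the base $G$-space $X$.
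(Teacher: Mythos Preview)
Your argument has a genuine gap: it only works when $X$ is a finite $G$-set, and you have omitted the reduction step that the paper uses to get there.

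The problem is your second equivalence, the inclusion $\underline{R(X)^{hH}_\delta} \hookrightarrow \underline{R(X)^{hH}_{h\delta}}$. You claim this induces an equivalence on $K$-theory for all $G$-spaces $X$ by collapsing each $Y_X$ to $X$ and each $D_i$ to a point. But the resulting object $X \sqcup \bigsqcup H/H_\alpha$ must come equipped with a projection to $X$, and collapsing a contractible $D_i$ forces a \emph{choice} of image point; this is neither canonical nor compatible with weak equivalences. More fundamentally, objects of $\Rd_H(X)$ only see $X$ as a set---the paper remarks explicitly (\autoref{d_and_hd_equivalent} and the remark following it) that $K(\Rd(X))$ is the suspension spectrum of the \emph{underlying set} of $X$, whereas $K(\Rhd(X))$ is the suspension spectrum of $X$ itself. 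So for non-discrete $X$ this inclusion is simply not a $K$-equivalence, and your zig-zag breaks. Relatedly, the $\GB$-module of \cite{GM3}/\cite{BO} that you invoke in the last step is only shown to model $\Sigma^\infty_G X_+$ for finite $G$-sets $X$.

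The paper repairs this by inserting a preliminary reduction: using the \cite{BD} identification of $K^\Wald(R(X)^{hH}_{h\delta})$ with $(\Sigma^\infty_G X_+)^H$ to verify that both sides have the same fixed-point spectra, and then invoking an equivariant assembly argument (\autoref{fixedpointsagree}, essentially Elmendorf's theorem) to conclude that both functors are enriched homotopy left Kan extensions from the orbit category. This reduces the comparison to the case of finite $G$-sets $X$, where your steps (and the paper's analogous steps) go through. After that reduction the paper's argument and yours are close in spirit, though the paper applies the Segal--Waldhausen comparison to $\Rd$ rather than to $\Rhd$, where the functorial splitting hypothesis of \autoref{segal_equals_waldhausen} is immediate; your proposed verification for $\Rhd$ would need more care than you indicate.
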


The proof occupies the rest of the section. We first reduce the statement to the case when $X$ is a finite $G$-set, essentially by proving that both functors are enriched homotopy left Kan extensions from the subcategory of $G$-orbits $G/H$ to all $G$-spaces.

To make sense of this, we first have to make the two functors simplicially enriched. We only know that they are homotopy functors (i.e. they preserve equivalences), but there is a standard trick to convert homotopy functors into simplicial functors, going back at least as far as \cite[Lem 3.1.2]{waldhausen}. The argument is given in detail in \cite[Lem 3.4]{malkiewich_merling_coassembly} for contravariant functors. The argument for covariant functors is essentially the same but uses the realization $|[n] \mapsto F(\Map_{sSet}(\Delta^n, X))|$ instead of $|[n] \mapsto F(\Delta^n \times X)|$. The fact that $X$ is a $G$-space only adds a few additional points to the proof, checking that the maps we feed into the functor $F$ are indeed equivariant maps. This proves the following.

\begin{lem}\label{replace_enriched}
	Let $F$ be a homotopy functor from unbased $G$-spaces to $G$-spectra. Then $F$ can be replaced up to a zig-zag of equivalences of functors, functorial in $F$, by a simplicially enriched functor.
\end{lem}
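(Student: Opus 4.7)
The plan is to mimic the construction of \cite[Lem 3.4]{malkiewich_merling_coassembly}, making the covariant modification noted in the excerpt. Define
$$\tilde F(X) := \bigl|\,[n] \longmapsto F\bigl(\Map_{sSet}(\Delta^n, X)\bigr)\,\bigr|,$$
where each $\Map_{sSet}(\Delta^n, X)$ inherits its $G$-action from $X$ (with trivial action on $\Delta^n$) and $F$ is applied levelwise to produce a simplicial $G$-spectrum whose realization is $\tilde F(X)$.

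The first step is to produce a natural zig-zag of equivalences $F \simeq \tilde F$. The projection $\Map_{sSet}(\Delta^n, X) \to X$ obtained by evaluating at any chosen vertex of $\Delta^n$ is a $G$-equivariant homotopy equivalence, with $G$-equivariant inverse the constant inclusion. Since $F$ is a homotopy functor, applying it yields a levelwise equivalence between $[n]\mapsto F(\Map_{sSet}(\Delta^n, X))$ and the constant simplicial $G$-spectrum $F(X)$, and hence on realizations an equivalence $\tilde F(X) \xrightarrow{\sim} F(X)$. This comparison is visibly natural in $F$ from the levelwise definition.

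The central step is constructing a simplicial enrichment, namely natural maps of simplicial sets $\Map_G(X,Y) \to \Map_{G\Sp}(\tilde F(X), \tilde F(Y))$ that are strictly associative and unital. A $k$-simplex $\sigma\colon \Delta^k \times X \to Y$ of $\Map_G(X,Y)$ has adjoint a $G$-map $\tilde\sigma\colon X \to Y^{\Delta^k}$, and cotensoring with $\Delta^n$ produces a $G$-map $X^{\Delta^n} \to Y^{\Delta^k \times \Delta^n}$. Applying $F$ in $n$ and assembling over $k$ produces the required map $\tilde F(X) \wedge \Delta^k_+ \to \tilde F(Y)$, where the target is recovered as the diagonal realization of the bisimplicial $G$-spectrum $[k,n]\mapsto F(Y^{\Delta^k \times \Delta^n})$, identified with $\tilde F(Y)$ via the vertex projection $\Delta^k \to \ast$. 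Composition and units follow from compatibility of adjunction with composition and from naturality of the vertex projection.

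The main obstacle is making this enrichment strictly associative, not merely associative up to coherent homotopy, so that $\tilde F$ is a genuine simplicial functor. This is handled exactly as in \cite[Lem 3.4]{malkiewich_merling_coassembly} by careful bookkeeping with the bisimplicial model of the cotensor; the only additional content in the equivariant setting is that $G$ acts trivially on every simplicial coordinate, so equivariance is preserved automatically throughout the argument. Functoriality in $F$ is evident from the levelwise definition, which completes the plan.
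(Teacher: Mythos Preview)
Your proposal is correct and follows essentially the same approach as the paper: the paper simply cites \cite[Lem 3.4]{malkiewich_merling_coassembly}, notes that the covariant version replaces $\Delta^n \times X$ by $\Map_{sSet}(\Delta^n,X)$, and observes that the equivariant adjustments amount to checking that the relevant maps are $G$-maps (trivially so, since $G$ acts only through $X$). You have spelled out exactly this argument in more detail.

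One small correction: the vertex projection $X^{\Delta^n}\to X$ is not a map of simplicial objects in $n$ (evaluation at vertex $0$ does not commute with $d_0$), so it does not directly induce a map $\tilde F(X)\to F(X)$. The natural comparison goes the other way, via the constant inclusion $X\to X^{\Delta^n}$, which \emph{is} a simplicial map from the constant object and gives $F(X)\xrightarrow{\sim}\tilde F(X)$. This is harmless for your argument and is presumably why the paper says ``zig-zag'' rather than committing to a direction.
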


%
%
%

Next we define an equivariant assembly map for any enriched functor $F$ from $G$-spaces to $G$-spectra; for a comprehensive discussion of equivariant assembly maps, see \cite{DavisLuck}. Let $X_{\sbt}$ be any $G$-simplicial set -- as for $G$-spaces, an equivalence of $G$-simplicial sets is a map that is an equivalence on the $H$-fixed points $|X_{\sbt}|^H \cong |X_{\sbt}^H|$ for all $H \leq G$. Let $\sO(G)_+$ denote the orbit category with objects $G/H$, and let $F$ be any simplicial functor from unbased $G$-spaces to $G$-spectra.

Define the bar construction $B(F(G/-),\sO(G)_+,|X_{\sbt}|^{-}_+)$ as the realization of the simplicial $G$-spectrum
$$q\mapsto \bigvee_{H_1,\dots, H_q } F(G/H_q)\sma \sO_G(G/{H_q}, G/H_{q-1})_+\sma \dots \sma \sO_G(G/H_2, G/H_1)_+\sma |X_{\sbt}|^{H_1}_+,$$
where $F(G/H_q)$ is a $G$-spectrum and the other spaces in the smash product have trivial $G$-action.
Note that $|X_{\sbt}|^{H_1} \cong |X_{\sbt}^{H_1}| \cong |\Map_\mathrm{sSet}(G/H_1, X_{\sbt})|$, where $G/H_1$ is viewed as a constant simplicial set and so the simplicial enrichment of $F$ provides a map from the above expression to $F(|X_{\sbt}|)$, compatible with the faces and degeneracies.
This defines the \emph{equivariant assembly map}
\begin{equation}\label{equivariant_assembly}
B(F(G/-),\sO(G)_+,|X_{\sbt}|^{-}_+) \longrightarrow F(|X_{\sbt}|).
\end{equation}
Though we will not formalize this, $F$ is an enriched homotopy left Kan extension from the subcategory of $G$-orbits precisely when \eqref{equivariant_assembly} an equivalence. Note also that this 
definition makes sense for any homotopy functor $F$, in light of \autoref{replace_enriched} and the fact that the assembly map respects weak equivalences in the $F$ variable. This condition is really a variant of the ``excisive'' condition from Goodwillie calculus \cite{calc1}, only the source category is $G$-spaces instead of spaces, so the one-point space is replaced by the orbits $G/H$ for all $H$.


\begin{rem}
Note that since $F$ is a homotopy functor, for any $G$-space $X$, we have an equivalence $F(|\mathrm{Sing}_{\sbt}(X)|)\to F(X)$, thus at the cost of  defining the assembly map via a  zig-zag, we could define it for all $G$-spaces, not only realizations of simplicial $G$-sets.	Alternatively, we could also define the equivariant assembly map using categories of simplices, which eliminates the need to make $F$ simplicially enriched, but the proofs become longer. However, defining the assembly map on $G$-spaces of the form $|X_{\sbt}|$ will be  enough for us to draw the conclusion that we want.
\end{rem}

\begin{prop}\label{fixedpointsagree}
Let $F$ be a functor from $G$-spaces to $G$-spectra with the property that it agrees on $H$-fixed points with  $(\Sigma^\infty_G X_+)^H$ for every subgroup $H$. Then the equivariant assembly map \eqref{equivariant_assembly} is a $G$-equivalence of $G$-spectra for every $G$-space of the form $|X_{\sbt}|$.
\end{prop}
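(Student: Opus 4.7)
The plan is to verify the equivariant assembly map is an equivalence of $G$-spectra by checking it on $H$-fixed points separately for each subgroup $H \leq G$, and then to reduce each such check to the analogous statement for the functor $\Sigma^\infty_G(-)_+$ itself, where the assembly map admits a transparent description.

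First, I would observe that forming $H$-fixed points commutes with the bar construction that defines the source of the assembly map. This is because the only piece carrying a nontrivial $G$-action is the $G$-spectrum $F(G/H_q)$; the orbit-category morphism spaces and the fixed-point spaces $|X_{\sbt}|^{H_1}_+$ enter with trivial $G$-action, and $H$-fixed points commute with wedges, with smashing against unbased spaces of trivial $G$-action, and with geometric realization. Consequently, the $H$-fixed points of \eqref{equivariant_assembly} take the form
$$B\bigl(F(G/-)^H,\sO(G)_+,|X_{\sbt}|^{-}_+\bigr) \longrightarrow F(|X_{\sbt}|)^H.$$
Using the hypothesized natural equivalences $F(-)^H \simeq (\Sigma^\infty_G(-)_+)^H$ as functors in the $G$-space variable, together with the naturality of the assembly map in its functor input $F$, this reduces the problem to showing that the $H$-fixed point assembly map for the functor $\Sigma^\infty_G(-)_+$ is an equivalence.

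Third, I would exploit that $\Sigma^\infty_G \colon G\textup{Top} \to G\Sp$ is a left adjoint and therefore commutes with wedges, smash products against unbased spaces, and geometric realizations. This allows $\Sigma^\infty_G$ to be pulled outside the bar construction, giving
$$B\bigl(\Sigma^\infty_G(G/-)_+,\sO(G)_+,|X_{\sbt}|^{-}_+\bigr) \simeq \Sigma^\infty_G\, B\bigl((G/-)_+,\sO(G)_+,|X_{\sbt}|^{-}_+\bigr),$$
and the assembly map for $\Sigma^\infty_G(-)_+$ becomes $\Sigma^\infty_G$ applied to the $G$-space-level assembly
$$B\bigl((G/-)_+,\sO(G)_+,|X_{\sbt}|^{-}_+\bigr) \longrightarrow |X_{\sbt}|_+.$$
This last map is a classical $G$-equivalence: it expresses any $G$-CW complex (in particular $|X_\sbt|_+$) as the homotopy colimit over the orbit category of its system of fixed-point spaces. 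Applying $\Sigma^\infty_G$ and then taking $H$-fixed points yields the desired equivalence.

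The main delicacy will be in the second step: one must check that the hypothesized natural equivalence $F(-)^H \simeq (\Sigma^\infty_G(-)_+)^H$ is compatible not only with maps of $G$-spaces in the $|X_\sbt|$ variable but also with the orbit-category morphism spaces used to glue together successive simplicial levels of the bar construction, so that it genuinely intertwines the two assembly maps coherently. In the applications that follow in this section, this compatibility will be visible from the explicit construction of $F$, but it is the concrete place where the otherwise abstract hypothesis is actually consumed.
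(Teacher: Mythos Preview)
Your proposal is correct and follows essentially the same route as the paper: check on $H$-fixed points, commute fixed points past the bar construction, reduce to $\Sigma^\infty_G(-)_+$ using the hypothesis, pull the suspension spectrum outside, and then invoke Elmendorf's theorem. The paper adds one small preliminary remark you omitted, namely that the hypothesis forces $F$ to be a homotopy functor (so that the assembly map is defined), and your final paragraph on the ``delicacy'' of the reduction step is a point the paper passes over quickly but which is indeed where the naturality hypothesis is actually used.
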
	

\begin{proof}
	First note that a functor as in the proposition is a homotopy functor, since equivalences in $G$-spectra are determined on fixed points. It therefore has an equivariant assembly map.
	
	We show that for any subgroup $H$, the nonequivariant map of genuine fixed point spectra 
	$$B(F(G/-),\sO(G)_+,|X_{\sbt}|^{-}_+)^H \longrightarrow F(|X_{\sbt}|)^H$$ is an equivalence. Since taking genuine fixed points commutes up to equivalence with geometric realizations, coproducts, and smashing with a space with trivial $G$-action, this reduces to proving that the map
	$$B(F(G/-)^H,\sO(G)_+,|X_{\sbt}|^{-}_+) \longrightarrow F(|X_{\sbt}|)^H$$ is an equivalence. Since this only uses the $H$-fixed point information of $F(X)$, it is therefore enough to prove this statement for $\Sigma^\infty_{G} (-)_+$. Pulling out the suspension spectrum from both sides of \eqref{equivariant_assembly}, the assembly map for $\Sigma^\infty_{G} (-)_+$ becomes the equivalence in the statement of Elmendorf's theorem for $G$-spaces; see also \cite{DavisLuck}.
\end{proof}

By \cite[Theorem 1.1.]{BD}, $K^\Wald\left(R(X)^{hH}_{h\delta}\right)$ is equivalent to $(\Sigma^\infty_G X_+)^H$, and by \autoref{inclusions}    
$\left\{K^\Wald\left(R(X)^{hH}_{h\delta}\right)\right\}\simeq \left\{ K^\Wald\left(\underline{R(X)^{hH}_{h\delta}}\right) \right\}$ assemble into a spectral Mackey functor. Thus the functor from $G$-spaces to $G$-spectra determined by sending a $G$-space $X$ to the $G$-spectrum modeled by this spectral Mackey functor satisfies the hypothesis in \autoref{fixedpointsagree}, therefore its equivariant assembly map is an equivalence for $G$-spaces of the form $|X_{\sbt}|$. Thus when $F$ is either our spectral Mackey functor or the suspension $G$-spectrum, for any $G$-space $X$, we get equivalences of $G$-spectra
$$B(F(G/-),\sO(G)_+,|\mathrm{Sing}_{\sbt}(X)|^{-}_+) \xrightarrow{\simeq} F(|\mathrm{Sing}_{\sbt}(X)|)\xrightarrow{\simeq} F(X).$$

Therefore each of these functors is equivalent to the source of its assembly map, which only depends on the behavior of the functor on the orbit category $\sO(G)$. We have therefore reduced the proof of \autoref{it_models_suspension_spectrum} to showing that the two functors are equivalent when restricted to finite $G$-sets.

It remains to prove that  our spectral Mackey functor 
 $\left\{ K^\Wald\left(\underline{R(X)^{hH}_{h\delta}}\right) \right\}$ is naturally equivalent to the suspension spectrum $\Sigma^\infty_G X_+$, as $X$ ranges over all finite $G$-sets. Under this restriction, we can simplify our spectral Mackey functor as follows:
\[ \xymatrix{
	\left\{ K^\Seg\left(i\Lambda\underline{R(X)^{hH}_{\delta}}\right) \right\} \ar[r]^-\sim &
	\left\{ K^\Wald\left(\underline{R(X)^{hH}_{\delta}}\right) \right\} \ar[r]^-\sim &
	\left\{ K^\Wald\left(\underline{R(X)^{hH}_{h\delta}}\right) \right\}
} \]
The left-hand map of Mackey functors is the map from \autoref{translate_modules} over the equivalence $\GB \xrightarrow{\simeq} \GB^\Wald$. The following lemma is immediate from the definitions, and by \autoref{segal_equals_waldhausen} implies this is an equivalence of Mackey functors (i.e. gives an equivalence on $K$-theory spectra at each $H \leq G$).
\begin{lem}\label{cofiber_sequences_split}
	The cofiber sequences in $\Rd_H(X)$ split in a way that is functorial along weak equivalences, as in \autoref{segal_equals_waldhausen}.
\end{lem}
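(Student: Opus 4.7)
The plan is to exhibit the splitting directly from the disjoint-union structure of objects in $\Rd_H(X)$, and then use the fact that in this category the weak equivalences are \emph{isomorphisms}, not just arbitrary weak equivalences, so functoriality is a bijective bookkeeping check.

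First I would unpack the shape of a cofiber sequence. By \autoref{discrete}, every object of $\Rd_H(X)$ has the form $X \sqcup S$ for a finite $H$-set $S$ equipped with an $H$-equivariant retraction $S \to X$, and a cofibration $a \hookrightarrow b$ is an injective $H$-map over and under $X$. Writing $a = X \sqcup A$ and $b = X \sqcup B$, a cofibration corresponds to an injective $H$-map $A \hookrightarrow B$ over $X$, and the cofiber $c = b/a$ is canonically $X \sqcup (B \setminus A)$, where $B \setminus A$ denotes the complement of the image of $A$ in $B$. This complement is automatically an $H$-subset, since the image of $A$ is $H$-stable.

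Next I would define the splitting $s\colon c \to b$ to be the identity on the $X$-summand and the canonical inclusion $B \setminus A \hookrightarrow B$ on the other summand. Since $B\setminus A$ is $H$-stable and the retraction to $X$ on $c$ is by construction the restriction of the retraction on $b$, this is a well-defined morphism in $\Rd_H(X)$, and the composite $c \to b \to c$ is manifestly the identity.

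Finally, I would check functoriality. Given a weak equivalence of cofiber sequences, i.e.\ an isomorphism in $\Rd_H(X)$ between $(a \to b \to c)$ and $(a' \to b' \to c')$, the vertical iso $b \to b'$ is an $H$-equivariant bijection $B \to B'$ that restricts to $A \to A'$ (by commutativity with the cofibrations), so it must also restrict to a bijection $B \setminus A \to B' \setminus A'$, which agrees with the induced iso $c \to c'$ on the non-basepoint summand. Hence the square
\[
\xymatrix{
c \ar[d]_-\cong \ar@{<-}[r]^-s & b \ar[d]^-\cong \\
c' \ar@{<-}[r]^-{s'} & b'
}
\]
commutes. There is no real obstacle here: the key observation is simply that cofibrations of discrete retractive $H$-spaces over $X$ split canonically as set-theoretic disjoint unions, and isomorphisms of cofiber sequences preserve this decomposition. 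This is the feature that fails in algebraic settings (e.g.\ for $\Ab$) but holds here because permutations cannot introduce off-diagonal entries.
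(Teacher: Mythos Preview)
Your proof is correct and is exactly what the paper has in mind; the paper itself gives no proof, only the remark that the lemma is ``immediate from the definitions'' together with the heuristic that a permutation matrix of upper-triangular block form must have zero off-diagonal block. One minor wording slip: $S$ is equipped with an $H$-equivariant \emph{map} $S \to X$ (the restriction of the projection), not a retraction; the retraction is of the whole space $X \sqcup S$ onto $X$.
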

The right-hand map of Mackey functors is an equivalence by the following lemma.
\begin{lem}\label{d_and_hd_equivalent}
	When $X$ is discrete, i.e. a $G$-set, the inclusion $\Rd_G(X) \to \Rhd_G(X)$ induces an equivalence on Waldhausen $K$-theory.
\end{lem}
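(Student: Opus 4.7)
The plan is to construct an exact functor $\pi_0 \colon \Rhd_G(X) \to \Rd_G(X)$ sending each homotopy discrete retractive $G$-space to its $G$-set of path components, and show that it is an inverse equivalence to the inclusion $i$.

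For $Y = Y_X \sqcup \bigsqcup_i D_i \times G/H_i \in \Rhd_G(X)$, the fact that $X \to Y_X$ is a $G$-homotopy equivalence between a discrete space and a space with component space $Y_X$ gives $\pi_0(Y_X) \cong X$, so $\pi_0(Y) \cong X \sqcup \bigsqcup_i G/H_i$ inherits a natural retract structure over $X$, placing it in $\Rd_G(X)$. On morphisms, any $G$-map over and under $X$ descends to a $G$-set map over and under $X$. I would verify that $\pi_0$ is exact: it sends the zero object to itself; it sends weak $G$-equivalences to $G$-bijections (since for each $H$, the $H$-fixed points of objects in $\Rhd_G(X)$ are again homotopy discrete and a weak equivalence induces a bijection on $\pi_0^H$); it sends cofibrations (which by definition are $\pi_0$-injective) to injections; and it preserves pushouts along cofibrations, because for a cofibration in $\Rhd_G(X)$ the pushout may be computed on the discrete part, so $\pi_0$ carries the pushout to the pushout of the $\pi_0$'s.

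Next, observe that $\pi_0 \circ i = \id_{\Rd_G(X)}$ strictly. In the other direction, the canonical quotient map $\eta_Y \colon Y \to \pi_0(Y)$ is a $G$-equivariant map over and under $X$, hence a morphism in $\Rhd_G(X)$, and it is a weak $G$-equivalence because on $H$-fixed points it collapses each contractible piece $D_i \times (G/H_i)^H$ to the discrete set $(G/H_i)^H$ and the $G$-equivalence $X^H \simeq Y_X^H$ maps isomorphically to $\pi_0(Y_X)^H = X^H$. Moreover $\eta$ is natural, so it gives a natural weak equivalence $\id_{\Rhd_G(X)} \Rightarrow i \circ \pi_0$.

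By the standard Waldhausen result that exact functors connected by a natural weak equivalence induce the same map on $K$-theory, we conclude that $K^\Wald(i)$ and $K^\Wald(\pi_0)$ are inverse equivalences. The step requiring the most care is the exactness of $\pi_0$, in particular preservation of pushouts along cofibrations: this is where the $G$-HEP and $\pi_0$-injectivity in the definition of cofibration in $\Rhd_G(X)$ are used, ensuring the pushout stays in $\Rhd_G(X)$ and that $\pi_0$ commutes with it.
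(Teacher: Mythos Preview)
Your proposal is correct and follows essentially the same approach as the paper: both construct a $\pi_0$-type functor back to $\Rd_G(X)$, observe that one composite is the identity and the other is naturally weakly equivalent to the identity via the collapse map, and conclude by the standard fact that natural weak equivalences induce homotopies on $K$-theory. The paper's version defines the inverse as $Y \mapsto X \sqcup \pi_0(Y\setminus Y_X)$ with the map $Y_X \to X$ given by the retraction, whereas you use the quotient $Y \to \pi_0(Y)$; since $X$ is discrete the retraction factors through $\pi_0$, so these agree and your extra care about exactness (especially the role of $\pi_0$-injectivity of cofibrations) fills in details the paper leaves implicit.
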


\begin{proof}
	We define an exact functor the other way by sending each retractive space $Y$ over $X$ to its set of connected components, $X \sqcup \pi_0(Y \setminus Y_X)$. (If $X$ is not discrete, this does not produce a space over $X$ in a natural way.) The composite $\Rd \to \Rhd \to \Rd$ is naturally isomorphic to the identity functor, while $\Rhd \to \Rd \to \Rhd$ is naturally weakly equivalent to the identity, along the map $Y \to X \sqcup \pi_0(Y \setminus Y_X)$ that projects $Y_X$ to $X$ and collapses each disc to a point.
\end{proof}

\begin{rem}
	The above lemma is consistent with the fact that when $X$ is discrete, both $K(\Rd(X))$ and $K(\Rhd(X))$ are equivalent to $\Sigma^\infty_+ X$. For more general spaces $X$, $K(\Rhd(X))$ is still the suspension spectrum of $X$, but $K(\Rd(X))$ is only the suspension spectrum of its underlying set.
\end{rem}
%
%

We are left with a Mackey functor of symmetric monoidal categories over $\GE$, which can be compared directly to the Mackey functor from the following result.
\begin{thm}[\cite{BO}, Thm. 9.4.]
	Let $X$ be a finite $G$-set. The $\GB$-module given by $K^\Seg(G\sE(-, X))$ represents the suspension $G$-spectrum $\Sigma_G^\infty X_+$.
\end{thm}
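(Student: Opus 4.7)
The plan is to read the statement as a form of the equivariant Barratt--Priddy--Quillen theorem, packaged at the level of spectral Mackey functors. The heuristic is that $K^\Seg$ applied to a symmetric monoidal groupoid yields the suspension spectrum of its classifying space, so the statement amounts to identifying the nerve of $G\sE(-, X)$ with a $G$-equivariant model for $X_+$, compatibly with Mackey structure.

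First I would reduce to the case of a transitive $G$-set $X = G/K$. A span $G/H \leftarrow S \to X_1 \sqcup X_2$ decomposes uniquely as a disjoint union of spans into each $X_i$, exhibiting $G\sE(G/H, X_1 \sqcup X_2)$ as the coproduct in $\cCMC$ of the two factors. Since $K^\Seg$ sends this coproduct to a wedge of spectra, and $\Sigma_G^\infty(-)_+$ likewise takes disjoint unions of $G$-sets to wedges of $G$-spectra, it suffices to handle $X = G/K$. The base case $X = *$ is itself the content of Guillou--May: the $\GB$-module $K^\Seg(G\sE(-,*))$ is the unit module, which under the equivalence $\GB\text{-}\Mod \simeq G\Sp$ corresponds to the sphere $G$-spectrum $\Sigma_G^\infty S^0$.

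For $X = G/K$ I would next classify connected spans $G/H \leftarrow G/L \to G/K$ up to isomorphism (equivalently, $G$-orbits of pairs of cosets $(gH, g'K)$ with $L = gHg^{-1}\cap g'Kg'^{-1}$) and identify their automorphism groups with the appropriate Weyl subgroups $W_{G/H \times G/K}(L)$. Classical BPQ applied to the free symmetric monoidal groupoid on this classification yields
\[
K^\Seg(G\sE(G/H, G/K)) \simeq \bigvee_{[L,\gamma]} \Sigma^\infty B\Aut(L,\gamma)_+,
\]
which matches, termwise, the tom Dieck splitting of $(\Sigma_G^\infty (G/K)_+)^H$ via the Wirthm\"uller identification $(G/K)^L_{hW_H L} \simeq \coprod_\gamma B\Aut(L,\gamma)$.

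The main obstacle is upgrading these levelwise equivalences to an equivalence of $\GB$-modules: one must verify that the action of a span $T \in \mc S_{H', H}$ by pullback composition on $G\sE(G/H, X)$ translates under $K^\Seg$ into the corresponding transfer or restriction map on $\Sigma_G^\infty X_+$. Here I would lean on the multifunctorial machinery set up in \autoref{spectralmack}: because $G\sE(-,-)$ defines a multifunctor $\cat M_S \to \cCMC$ extending the ring multifunctor $\GE\colon \cat R_S \to \cCMC$, postcomposition with $K^\Seg$ transports composition of spans into the module structure on $K^\Seg(G\sE(-, X))$ on the nose. Once this strict compatibility is recorded, the identification becomes essentially a representability statement: the $\GB$-module $K^\Seg(G\sE(-, X))$ is the free $\GB$-module on $X$, which under Guillou--May corresponds to the $G$-spectrum freely generated by $X$, namely $\Sigma_G^\infty X_+$. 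The remaining bookkeeping, and the bulk of the work in \cite{BO}, is checking that the Segal machine really does preserve enough of the Burnside-categorical structure to realize this Yoneda-style identification at the spectrum level.
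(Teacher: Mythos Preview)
This theorem is not proved in the paper; it is quoted from \cite{BO}, Theorem~9.4, and used as a black box. There is therefore no proof in the paper to compare your proposal against.

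That said, your sketch is a reasonable outline of the strategy behind the cited result. The reduction to orbits via decomposition over $X_1 \sqcup X_2$, the identification of $K^\Seg$ on a free symmetric monoidal groupoid via Barratt--Priddy--Quillen, and the matching against the tom Dieck splitting are all correct heuristics. Your final paragraph is honest about where the real work lies: checking that the levelwise equivalences assemble to a map of $\GB$-modules, i.e., that the span-composition action on $G\sE(-,X)$ really corresponds under $K^\Seg$ to the restriction and transfer structure on $\Sigma_G^\infty X_+$. You correctly flag that this is the content of \cite{BO} rather than something you are supplying. One small caution: the ``free $\GB$-module on $X$'' language, while morally right, needs care, since the Guillou--May equivalence is a Quillen equivalence rather than an equivalence of symmetric monoidal $\infty$-categories on the nose, so identifying representable modules with suspension spectra requires the specific comparison arguments in \cite{BO} and \cite{GM3} rather than a pure Yoneda argument.
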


%


\subsection{Reconciling the different models for $\Sigma_G^\infty X$}
We have now reduced the problem to giving a natural map of $\GE$-modules in cocartesian monoidal categories
$$\left\{ i\Lambda\underline{R(X)^{hH}_\delta} \right\}_{H \leq G} \longrightarrow \left\{G\sE(G/H, X)\right\}_{H \leq G}$$
that for each $H$ is an equivalence of categories, and therefore gives an equivalence of Segal $K$-theory spectra. By \autoref{pseudo_can_be_strictified}, it is enough to give a pseudo linear map of modules. By \autoref{thickening_preserves_functoriality} and \autoref{natural_thickens_to_natural}, to make this natural in $X$ it is enough to make the pseudo linear map natural in the sense of \autoref{pseudo_is_natural}.

We begin by concretely describing the map up to equivalence of categories. Recall that the category $i\Lambda \big( \Rd_H(X)\big)$ consists of retractive $H$-spaces over $X$ of the form $X\rightarrow X\sqcup S\rightarrow X$, with isomorphisms between them and symmetric monoidal structure given by sum under $X$. On the other hand $G\sE(G/H,X)$ consists of finite $G$-sets over $G/H\times X$, with disjoint union. We map
$$i\Lambda \big( \Rd_H(X)\big) \longrightarrow G\sE(G/H, X)$$
by sending $X \sqcup S$ with projection $p$ to the set $G \times_H S$, with each point $(g,s) \in G \times_H S$ projecting to $gH \in G/H$ and $gp(s) \in X$. This operation is an equivalence of categories and therefore preserves the coproduct. The inverse functor takes each span $G/H \xleftarrow{\rho}  S\xrightarrow{p} X$ to $X \sqcup \rho^{-1}(eH)$, with the map to $X$ provided by $p$.

\begin{prop}
	Let $X$ be a finite $G$-set. There is a pseudo linear map (\autoref{pseudolinear}) of $\GE$-modules in $\cCMC$
	$$\{ i\Lambda\underline{R(X)^{hH}_\delta}\} \longrightarrow \{G\sE(G/H, X)\}$$
	that is natural in $X$, and up to equivalence agrees with the equivalence of categories described just above.
\end{prop}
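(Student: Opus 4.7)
The plan is to define $\Phi_H$ by composing the equivalence $\underline{R(X)^{hH}_\delta} \simeq R(X)^{hH}_\delta$ from \autoref{tildefattening} with the functor $R(X)^{hH}_\delta \simeq \Rd_H(X) \to G\sE(G/H, X)$ described above the proposition, and to extract the coherence iso $\theta$ from the Beck-Chevalley isomorphisms already encoded in the strictification. First I would define the functor on objects: given a triple $(J, F, T) \in \underline{R(X)^{hH}_\delta}$, the object $F \in R(X)^{hJ}_\delta$ corresponds under \autoref{functor_to_space} to a discrete retractive $J$-space $X \sqcup S_0$, and acting by the span $T \in \mc S_{J,H}$ yields a discrete retractive $H$-space of the form $X \sqcup S$; set $\Phi_H(J, F, T) := G \times_H S$ with its evident projections to $G/H$ and $X$. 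When $J = H$ and $T$ is the identity span this reduces to the formula above the proposition. Morphisms extend in the obvious way, and $\Phi_H$ preserves coproducts because $\sqcup$ distributes over $G \times_H (-)$.

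Next I would identify the coherence datum $\theta_{H',H}$. For a span $U \in \mc S_{H',H}$ and an object $(J,F,T)$, both composites $U \cdot \Phi_H(J,F,T)$ and $\Phi_{H'}(U \cdot (J,F,T))$ compute the same iterated pullback of the $G$-sets $G \times_J S_0$, $T$, and $U$; they differ only in the order in which the chosen lexicographic pullback model is applied, and the resulting canonical reordering is the desired $\theta_{H',H}$. Crucially, this is precisely the Beck-Chevalley isomorphism already used to strictify the $\mc S$-action in \cite{CaryMona}, cf.\ diagram \eqref{2_cell_to_strictified}. The unit condition $\theta_{H,H} = \id$ on the identity span is immediate, and the associativity pentagon for $\theta$ reduces to the cocycle condition for Beck-Chevalley isos for triple pullbacks of $G$-sets, which is classical and is already the coherence underlying associativity of the action on $\underline{R(X)^{hH}}$. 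Naturality in $X$ in the sense of \autoref{pseudo_is_natural} follows from the commutativity of \eqref{BC_commutes_with_functoriality}, since a $G$-map $f\colon X \to X'$ only changes the projection to $X$ and strictly commutes with all the reindexing and pullback isos that build $\theta$.

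The hard part is bookkeeping in the second step: one must verify that the coherence iso $\theta_{H',H}$ coming from the pseudo linear structure is the very same Beck-Chevalley iso built into the strictification $\underline{R(X)^{hH}}$, not merely an arbitrary natural iso between the two composites. Once this identification is confirmed, the remaining axioms hold by appeal to already-established coherence, and the proposition itself then combines, via \autoref{pseudo_can_be_strictified} and \autoref{natural_thickens_to_natural}, with the reductions preceding its statement to complete the construction of the desired map of $\GE$-modules. Agreement with the equivalence of categories stated just above the proposition is then a matter of unwinding $\Phi_H$ on objects with $J = H$ and $T$ the identity span.
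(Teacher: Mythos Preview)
Your outline has the right shape---define the functor on the unthickened categories and then paste with the Beck--Chevalley 2-cell from \eqref{2_cell_to_strictified}---but the central identification you rely on is not correct, and this is exactly where the work lies.

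You assert that the coherence isomorphism $\theta_{H',H}$ ``is precisely the Beck--Chevalley isomorphism already used to strictify the $\mc S$-action.'' It is not. The Beck--Chevalley isomorphisms in \eqref{2_cell_to_strictified} live entirely on the retractive-space side: they compare $(S*T)*F$ with $S*(T*F)$ inside $R(X)^{hH}$. What you need in addition is an isomorphism comparing the two \emph{different} operations
\[
S \times_{G/K} \bigl(G \times_K F(e,K)\bigr) \;\cong\; G \times_H \Bigl( \coprod_{i \in p^{-1}(H)} F(e,q(i)) \Bigr),
\]
i.e.\ pulling back an induced $G$-set along a span versus inducing up the result of the span action $q_!p^*$ on $F$. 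This is not a reordering of lexicographic pullback models: the two sides are built from genuinely different functors (induction $G\times_H(-)$ versus fibre product over $G/K$), and matching them uses the $G$-equivariance of $F$ in a nontrivial way. In the paper this isomorphism is written down explicitly (the map $(s,g,x)\mapsto(\gamma,i,y)$ using a chosen lift $\gamma$ of $p(s)$ and the diagram \eqref{eq:critical_diagram}), and well-definedness, naturality in $S$ and $F$, and compatibility with the formal unit all require separate verification.

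Consequently your associativity argument also has a gap. The coherence cube does split into two pieces, but only one of them (comparing strictified to unstrictified actions) is the standard Beck--Chevalley pasting you invoke. The other piece is the cocycle condition for the \emph{new} isomorphism above, and verifying it is the main computation of the proof: one must trace a point $(b,a,g,x)$ through both routes and check that the two resulting elements of $F(e,q(\rho^{-1}a))$ agree, which comes down to the commutativity of a specific diagram of values of $F$. Calling this ``classical'' skips the actual content. Your instinct that everything is Beck--Chevalley-flavoured is sound, but the isomorphism you need has not been constructed anywhere prior to this point, and its coherence is what must be proved.
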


\begin{proof}
Since we are only considering isomorphisms between discrete retractive spaces, we can ignore the basepoint section. So when we write $F(g_1,g_2 K)$, we mean just the disjoint set and not the copy of $X$. Combining the above discussion with \autoref{functor_to_space} and \autoref{Rdhtpyfixedpts} gives an equivalence of categories
$$f_K\colon i\Lambda R(X)^{hK}_\delta \overset{\textup{\autoref{Rdhtpyfixedpts}}}\simeq i\Lambda(\Rd(X))^{hK}\overset{\textup{\autoref{functor_to_space}}}\simeq i\Lambda\Rd_K(X) \longrightarrow G\sE(G/K, X)$$
taking each functor $F\colon \tG \times G/K \to \Rd(X)$ to the $G$-set over $G/K \times X$ given by
\[ \xymatrix @R=1em @C=1em{
	& G \times_K F(e,K) \ar[ld] \ar[rd] & \\
	G/K && X,
} \]
with $k \in K$ acting on $F(e,K)$ by
\[ \xymatrix @C=4em{ F(e,K) \ar@{=}[r] & F(k,K) \ar[r]^-{F(k \to e,K)} & F(e,K). } \]

	The fact that $F$ is $G$-equivariant implies that the square below commutes. This demonstrates that the action of $k$ on $F(e,K)$ can equally well be understood as the action of any arrow in $\tG$ that ``right multiplies by $k^{-1}$.''
	\[ \xymatrix @C=5em{ F(e,K) \ar@{=}[r] \ar@{=}[dr] & F(k,K) \ar@{=}[d] \ar[r]^-{F(k \to e,K)} & F(e,K) \ar@{=}[d] \\
		& F(g,gK) \ar[r]^-{F(g \to gk^{-1},K)} & F(gk^{-1},gK). } \]
	
Next we define natural isomorphisms
\begin{equation}\label{theta_HK}
\xymatrix @R=2em @C=3em{
		G\sE(G/H,G/K) \times i\Lambda R(X)^{hK}_\delta  \ar[d]_-{\id\times f_K} \ar[r]   \drtwocell<\omit>{<0> \ \ \ \   \theta_{H,K}}  & i\Lambda R(X)^{hH}_\delta \ar[d]^-{f_H} \\%
		G\sE(G/H,G/K) \times G\sE(G/K, X)  \arrow[r]& G\sE(G/H, X).
	}
\end{equation}
Let $G/H \xleftarrow{p}  S\xrightarrow{q} G/K$ be a span in $G\sE(G/H,G/K)$ and $\tG \times G/K \xrightarrow{F} \Rd(X)$ be a functor in the top-left category. The bottom-left route takes $(S,F)$ to the span
	\[ \xymatrix @R=1em @C=1em{
		& S \times_{G/K} \left( G \times_K F(e,K) \right) \ar[ld] \ar[rd] & \\
		G/H && X } \]
	and the top-right route takes it to the span
	\[ \xymatrix @R=1em @C=1em{
		& G \times_H \left( \coprod_{i \in p^{-1}(H)} F(e,q(i)) \right) \ar[ld] \ar[rd] & \\
		G/H && X. } \]
	so we just need a natural isomorphism between these two spans.
	
	We first explain how we calculated these in a little more detail. The first is essentially by definition, since $f_K$ produces $G \times_K F(e,K)$ and $S$ acts by pulling back from $G/K$ to $S$. For the second, $S$ first acts on $F$, producing the functor $S * F = p_!q^*F\colon \tG \times G/H \rightarrow \Rd(X)$. We recall from \cite[Def 4.7]{CaryMona} the formula
	\[ (S*F)(g_1,g_2H) = (p_!q^*F)(g_1,g_2H) = g_1\left( \coprod_{i \in p^{-1}(g_1^{-1}g_2H)} F(e,q(i)) \right) \cong \coprod_{j \in p^{-1}(g_2H)} F(g_1,q(j)), \]
	The above isomorphism is chosen to be the canonical map that commutes the functor $g_1$ with the coproduct, plus the equality
	\[ g_1F(e,q(i)) = F(g_1,g_1q(i)) = F(g_1,q(j)) \textup{ for } j = g_1i. \]
	This isomorphism is used to define the adjunction between $p_!$ and $p^*$ on the categories of $G$-equivariant functors, and to define the action of $p_!q^*F$ on morphisms in $\tG \times G/H$. Each morphism $(g_1 \rightarrow g_3,g_2H)$ is sent to the coproduct morphism
	\[ \xymatrix @C=7em {
		g_1\left( \coprod_{i \in p^{-1}(g_1^{-1}g_2H)} F(e,q(i)) \right) \ar@{<->}[d]_-\cong & g_3\left( \coprod_{i \in p^{-1}(g_3^{-1}g_2H)} F(e,q(i)) \right) \ar@{<->}[d]^-\cong \\
		\coprod_{j \in p^{-1}(g_2H)} F(g_1,q(j)) \ar[r]^-{\coprod F(g_1 \rightarrow g_3,q(j))} & \coprod_{j \in p^{-1}(g_2H)} F(g_3,q(j)).
	} \]
	With these choices $p_!$ is indeed the equivariant left adjoint of $p^*$, well-defined up to a canonical equivariant isomorphism that arises either from the uniqueness of left adjoints or through uniqueness of the coproduct (both give the same map).
	Using the fact that the $h$-action on $R(X)$ preserves total spaces and the morphisms between them, we compute that the $h$-action on
	\[ (p_!q^*F)(e,H) = \coprod_{i \in p^{-1}(H)} F(e,q(i)) \]
	is the map that sends the $i$ summand to the $hi$-summand by
	\[ \xymatrix @C=6em{ F(e,q(i)) \ar@{=}[r] & F(h,q(hi)) \ar[r]^-{F(h \to e,q(hi))} & F(e,q(hi)). } \]
	As a consistency check, we see directly that the map $\coprod_{i \in p^{-1}(H)} F(e,q(i))\to X$ is $H$-equivariant. At any rate, this is the $H$-action that we use when we apply $f_H$.
	
	Now we define the isomorphism 
	\[ \xymatrix{ S \times_{G/K} \left( G \times_K F(e,K) \right) & \ar@{<->}[l]_-{\theta_{H,K}}^-\cong G \times_H \left( \coprod_{i \in p^{-1}(H)} F(e,q(i)) \right). } \]
	Heuristically, each one is a coproduct over $S$ of various values of $F$, so the trick is to pick the right identification between these values. We assign a triple $(s,g,x) \in S \times G \times F(e,K)$ to the triple $(\gamma,i,y) \in G \times \coprod_{i \in p^{-1}(H)} F(e,q(i))$ by taking $\gamma$ to be a lift of $p(s)$ to $G$:
	\[ \xymatrix @R=0.5em @C=1em{
		G \ar[rd] && S \ar[ld]_-p \\
		\gamma \ar[rd] & G/H & s \ar[ld] \\
		& \gamma H = p(s) & } \]
	Then take $i = \gamma^{-1}s$, and $y$ to be the image of $x$ under any of the maps in the commuting diagram
	\begin{equation}\label{eq:critical_diagram}
		\xymatrix @C=6em @R=1em{
		x \in F(e,K) \ar@{=}[d] \ar[r]^-{F(e \to g^{-1}\gamma,K)} & F(g^{-1}\gamma,K) \ar@{=}[d] \\
		F(g,gK) \ar@{=}[d] \ar[r]^-{F(g \to \gamma,gK)} & F(\gamma,gK) \ar@{=}[d] \\
		F(g,\gamma q(i)) \ar@{=}[d] \ar[r]^-{F(g \to \gamma,\gamma q(i))} & F(\gamma,\gamma q(i)) \ar@{=}[d] \\
		F(\gamma^{-1}g,q(i)) \ar[r]^-{F(\gamma^{-1} g \to e,q(i))} & F(e,q(i)) \ni y.
	}
	\end{equation}
	
	This diagram uses the equality $gK = q(s) = \gamma q(i)$. The following diagram is helpful for quickly checking this and several other equalities in this proof.
	\[ \xymatrix @R=0.5em @C=1em{
		G \ar[rd] && S \ar[ld]_-p \ar[rd]^-q && G \ar[ld] \\
		\gamma \ar[rd] & G/H & \gamma i = s \ar[ld] \ar[rd] & G/K & g \ar[ld] \\
		& \gamma H = p(s) && \gamma q(i) = q(s) = gK } \]
	
	If we replace $\gamma$ by $\gamma h$, we instead get the triple $(\gamma h, h^{-1}i, h^{-1}y)$, by appending the following commuting diagram to the bottom of \eqref{eq:critical_diagram}:
	\[ \xymatrix @C=10em @R=2em{
		F(\gamma^{-1}g,\ q(i)) \ar@{=}[d] \ar[r]^-{F(\gamma^{-1} g \to e,\ q(i))} & F(e,\ q(i)) \ni y \ar@{=}[d] \\
		F(h^{-1}\gamma^{-1}g,\ q(h^{-1}i)) \ar[dr]_-{F(h^{-1}\gamma^{-1} g \to e,\ q(h^{-1}i))\ \ \ \ \ \ \ \ \ } \ar[r]^-{F(h^{-1}\gamma^{-1} g \to h^{-1},\ q(h^{-1}i))} & F(h^{-1},\ q(h^{-1}i))  \ar[d]^-{F(h^{-1} \to e,\ q(h^{-1}i))} \\
		& F(e,\ q(h^{-1}i)) \ni h^{-1}y
	} \]
	This gives the same point in the quotient $G \times_H \left( \coprod_{i \in p^{-1}(H)} F(e,q(i)) \right)$. The other choice in this construction was the choice of representative $(s,g,x)$ of a point in the domain. Any other representative takes the form $(s,gk,k^{-1}x)$, but this results in the same output, by appending the following diagram to the top of \eqref{eq:critical_diagram}.
	\[ \xymatrix @C=10em{
		F(k^{-1},\ K) \ar@{=}[d] \ar[r]^-{F(k^{-1} \to k^{-1}g^{-1}\gamma,\ K)} & F(k^{-1}g^{-1}\gamma,\ K) \ar@{=}[d] \\
		x \in F(e,K) \ar[ur]^-{F(e \to k^{-1}g^{-1}\gamma,\ K)\ \ \ \ \ \ } \ar[r]^-{F(e \to g^{-1}\gamma,\ K)} & F(g^{-1}\gamma,\ K) \\
	} \]
	The outside square of this commutes, and the upper-left triangle when followed clockwise brings $x$ to $k^{-1}x$. Therefore the bottom map on $x$ (the original definition) agrees with the diagonal map on $k^{-1}x$, followed by the right vertical (the new definition).
	
	So we have a well-defined map of sets. It commutes with the projection to $G/H$ because both triples are sent to $p(s)=\gamma H$. We check compatibility with the map to $X$. Recall that $q(i) = \gamma^{-1}gK$. Let $\rho\colon F(e, K)\to X$ and $\rho'\colon F(e, q(i))\to X$ be the projections. Recall that although $F(\gamma^{-1}g,\gamma^{-1}gK)$ is equal to $F(e,K)$ as a set, it has a different projection to $X$, namely $\gamma^{-1}g \circ \rho$. Since $F$ sends each morphism in $\tG \times G/K$ to a map of retractive spaces, we get a commuting diagram
	\[\xymatrix @C=3em @R=1em{
		F(\gamma^{-1}g,q(i)) \ar[dr]_-{\gamma^{-1}g \circ \rho} \ar[rr]^-{F(\gamma^{-1} g \to e,q(i))} && F(e,q(i))\ar[dl]^-{\rho'} \\
		& X& 
	}\]
	Combining with \autoref{eq:critical_diagram}, we conclude $\gamma^{-1}g\rho(x)=\rho'(y)$, or $g\rho(x)=\gamma\rho'(y)$. But the triple $(s,g,x)$ projects to $g\rho(x)$ and the triple $(\gamma,i,y)$ projects to $\gamma\rho'(y)$, so our correspondence does indeed respect the projection to $X$.

	The map we constructed above has an inverse that sends $(\gamma,i,y)$ to $(s,g,x)$ by taking $s = \gamma i$, $g$ a lift of $\gamma q(i) \in G/H$ to $G$, and $x$ the image of $y$ under the commuting maps in \eqref{eq:critical_diagram}. The same commuting diagrams confirm that this is well-defined, hence we have an isomorphism $\theta_{H,K}$ for each span $S$ and functor $F$. It is natural in $F$ since all of the maps in \autoref{eq:critical_diagram} commute with a map of functors $F \to F'$. It is also natural in $S$: along any equivariant map of spans $\sigma\colon S \to S'$,the values of $g$ and $\gamma$ do not change, hence $(\sigma(s),g,x)$ is sent to $(\gamma,\gamma^{-1}\sigma(s),y) = (\gamma,\sigma(i),y)$. The fact that this last expression comes from bringing $\sigma$ along the top route, then applying it to $(\gamma,i,y)$, is a bit of a diagram chase through \cite[Prop 4.10]{CaryMona}.
	
	We extend the definition of $\theta_{K,K}$ to the formal unit $1_{G/K} \in \GE(G/K,G/K)$ by noting that on the formal unit the square \eqref{theta_HK} strictly commutes, so we can just define $\theta_{K,K}$ to be the identity. This extended $\theta_{K,K}$ is clearly an isomorphism and natural in $F$. Naturality in $S$ reduces to showing $\theta_{K,K}$ commutes with the canonical isomorphism between the identity and with composition with the span
	\[ \xymatrix @R=1em @C=1em{
		& S = G/K \ar@{=}[ld] \ar@{=}[rd] & \\
		G/K && G/K. } \]
	In particular, we must show the following commutes, where the vertical maps are the two canonical isomorphisms that are used when extending multiplication in $\GE$ to the formal unit, and when extending the action of spans on retractive spaces to the formal unit, respectively.
	\[ \xymatrix @C=10em{
		G \times_K F(e,K) \ar[d]_-\cong \ar@{=}[r]^-{\theta_{K,K} \textup{ (on the formal unit)}} &
		G \times_K F(e,K) \ar[d]_-\cong \\
		G/K \times_{G/K} (G \times_K F(e,K)) \ar@{<->}[r]^-{\theta_{K,K} \textup{ (when }S = G/K)}
		& G \times_K (G/K * F)(e,K)
	} \]
	We compute the bottom map at the point $(gK,G,x) \in G/K \times G \times F(e,K)$ by taking $\gamma = g$, then $q(i) = K$, and $F(\gamma^{-1} g \to e,q(i))$ is an identity map so $y = x$. Along the canonical identifications we therefore get the identity of $G \times_K F(e,K)$.
	
	Now we let $\underline{\theta_{H,K}}$ denote the pasting of the isomorphism above and the Beck-Chevalley isomorphism from \eqref{2_cell_to_strictified}:
\[\xymatrix @R=2em @C=3em{
	G\sE(G/H,G/K) \times i\Lambda \underline{R(X)^{hK}_\delta}  \ar[d]_-\sim \ar[r]   \drtwocell<\omit>{<0> \ \ \ \   \textup{BC}}  & i\Lambda \underline{R(X)^{hH}_\delta} \ar[d]^-\sim \\%
	G\sE(G/H,G/K) \times i\Lambda R(X)^{hK}_\delta  \ar[d]_-{\id\times f_K} \ar[r]   \drtwocell<\omit>{<0> \ \ \ \   \theta_{H,K}}  & i\Lambda R(X)^{hH}_\delta \ar[d]^-{f_H} \\%
	G\sE(G/H,G/K) \times G\sE(G/K, X)  \arrow[r]& G\sE(G/H, X).
}\]
	
	We check that $\underline{\theta_{H,K}}$ is natural in maps $X \to X'$ in the sense of \autoref{pseudo_is_natural}. This breaks into two conditions, one for $\theta_{H,K}$ and one for the Beck-Chevalley map, which are checked at each object in the source. For $\theta_{H,K}$ the condition holds because the definition of $\theta_{H,K}$ ignores the projection to $X$, so it commutes with composing that projection with a map $X \to X'$. The condition for the Beck-Chevalley map is checked at each object of $G\sE(G/H,G/K) \times i\Lambda \underline{R(X)^{hK}_\delta}$, where it becomes the commutativity of \eqref{BC_commutes_with_functoriality}. In both cases if the span is the formal unit, the natural isomorphism is an equality, so the condition follows immediately.
	
	It remains to check the associativity and unit coherences from \autoref{pseudolinear}. The unit coherence is immediate because $\theta_{K,K}$ is an equality on the formal unit. For the associativity coherence, it suffices to check it once on every isomorphism class of objects, so we can discard the formal units from the categories $G\sE(G/H,G/K)$. The associativity coherence cube then breaks into two cubes, joined along one square face that has a Beck-Chevalley map. The top cube at the object $(S,T,(U,F))$ boils down to standard pasting results about Beck-Chevalley isomorphisms on the grid
	\[ \xymatrix @R=1em @!C=1em{
		&&& S * T * U \ar[ld] \ar[rd] && \\
		&& S * T \ar[ld] \ar[rd] && T * U \ar[ld] \ar[rd] & \\
		& S \ar[ld] \ar[rd] && T \ar[ld] \ar[rd] && U \ar[ld] \ar[rd] & \\
		G/K && G/H && G/L && G/J. } \]
	For the bottom cube, we need to show that the following two composites of 2-cells agree:
	\[\xymatrix@R=1.5em @!C=10em{
		& G\sE(G/L,G/K)\times i\Lambda R(X)^{hK}_\delta  \ar[rd]
		\\
		G\sE(G/L,G/H)\times G\sE(G/H,G/K)\times i\Lambda R(X)^{hK}_\delta \ar[dd]_-{\id\times f_K} \ar[rd] \ar[ru]  \xtwocell[rddd]{}<>{<0> \ \ \ \ \ \theta_{H,K}} \xtwocell[rr]{}<>{<0> \ \ \ \ \ \textup{BC}}
		&& i\Lambda R(X)^{hL}_\delta \ar[dd]^-{f_L}
		\\
		& G\sE(G/L,G/H)\times  i\Lambda R(X)^{hH}_\delta \ar[dd]^-{f_H}   \ar[ru]   \drtwocell<\omit>{<0> \ \ \ \ \ \theta_{L,H}}
		\\
		G\sE(G/L,G/H)\times G\sE(G/H,G/K)\times G\sE(G/K, X)  \arrow[rd]&& G\sE(G/L, X)
		\\
		&G\sE(G/L,G/H)\times  G\sE(G/H, X) \arrow[ru]
	}\]
	\[\xymatrix@R=1.5em @!C=10em{
		& G\sE(G/L,G/K)\times i\Lambda R(X)^{hK}_\delta  \ar[rd] \ar[dd]^-{\id\times f_K} \xtwocell[rddd]{}<>{<0> \ \ \ \ \ \theta_{L,K}}
		\\
		G\sE(G/L,G/H)\times G\sE(G/H,G/K)\times i\Lambda R(X)^{hK}_\delta \ar[dd]_-{\id\times f_K}\ar[ru]  \xtwocell[rd]{}<>{<0> \ \ \ \ =} 
		&& i\Lambda R(X)^{hL}_\delta \ar[dd]^-{f_L}
		\\
		& G\sE(G/L,G/K)\times G\sE(G/K,X) \ar[rd]
		\\
		G\sE(G/L,G/H)\times G\sE(G/H,G/K)\times G\sE(G/K, X) \ar[ru] \ar[rd] \xtwocell[rr]{}<>{<0> \ \ \ \ =}
		&& G\sE(G/L, X)
		\\
		&G\sE(G/L,G/H)\times  G\sE(G/H, X) \arrow[ru]
	}\]
We check this at the triple $(T,S,F)$ where $S$ and $T$ are spans
\[ \xymatrix @R=1em @!C=1em{
	&& T * S \ar[ld] \ar[rd] & \\
	& T \ar[ld]_-s \ar[rd]^-r && S \ar[ld]_-p \ar[rd]^-q & \\
	G/L && G/H && G/K }
\]
and $\tG \times G/K \xrightarrow{F} \Rd(X)$ is a functor.
%
The required commuting diagram is
\[ \xymatrix @R=1.5em{
	G \times_L (T * (S * F))(e,L) \ar@{<->}[d]^-{\theta_{L,H}} \ar@{<->}[r]^-{BC} &
	G \times_L ((T * S) * F)(e,L) \ar@{<->}[d]^-{\theta_{L,K}} \\
	T \times_{G/H} (G \times_H (S * F)(e,H)) \ar@{<->}[d]^-{T \times_{G/H} (\theta_{H,K})} &
	(T * S) \times_{G/K} (G \times_K F(e,K)) \ar@{=}[d] \\
	T \times_{G/H} (S \times_{G/K} (G \times_K F(e,K))) \ar@{=}[r] &
	(T \times_{G/H} S) \times_{G/K} (G \times_K F(e,K)).
} \]
We compare both routes from the lower-left to the top two terms, which expand out as
\[ T * (S * F) (e,L) = \coprod_{j \in s^{-1}(L)} (S * F)(e,r(j)) = \coprod_{j \in s^{-1}(L)} \coprod_{i \in p^{-1}(r(j))} F(e,q(i)) \]
\[ (T * S) * F (e,L) = \coprod_{(j,i) \in T * S, s(j) = L} F(e,q(i)). \]
and are identified in the obvious way. For simplicity, we can include further into the sum over all $j \in T$ and $i \in S$ when checking that the two maps agree. Along the left-hand route, $(b,a,g,x)$ goes to
\[ \begin{array}{rrrrlllll}
	(b, &a, &g, &x) &\in & T\times_{G/H} &(S\times_{G/K} &(G\times_K & F(e,K))) \\
	(b, &\gamma, &i = \gamma^{-1}a, &y) &\in & T\times_{G/H} &(G\times_H & \coprod_{i\in p^{-1}(H)} &F(e, q(i)))  \\
	(\rho, &j = \rho^{-1}b, &i = \rho^{-1} a, &z) &\in & G\times_L &(\coprod_{j\in s^{-1}(L)} &\coprod_{i\in p^{-1}(r(j))} &F(e, q(i))) \\
\end{array} \]
Recall $\gamma$ is a representative of the $H$-coset $p(a)$ and $y$ is the image of $x$ under the map
$$F(e,K)\xrightarrow{F(e\to g^{-1}\gamma, K)} F(g^{-1}\gamma, K)=F(e, q(\gamma^{-1}a)),$$
which uses the equality $q(\gamma^{-1}a) = \gamma^{-1}gK$. Applying the formula again, $\rho$ is a representative of the $L$-coset $s(b)$, and we find the image of $(i = \gamma^{-1}a, y)$ under the map
$$(S*F)(e,H)\xrightarrow{(S*F)(e\to \gamma^{-1}\rho, H)} (S*F)(\gamma^{-1}\rho, H) = (S*F)(e, r(\rho^{-1}b))$$
which uses the equality $r(\rho^{-1}b) = \rho^{-1}\gamma H$. Expanding out this map gives the commuting diagram
\[ \resizebox{\textwidth}{!}{ \xymatrix @R=2em @C=1em{
	\coprod_{i\in p^{-1}(H)} F(e, \ q(i)) \ar[r]^-{} &
	\coprod_{i\in p^{-1}(H)} F(\gamma^{-1}\rho, \ q(i)) \ar@{<->}[r]^-\cong &
	\gamma^{-1}\rho(\coprod_{i\in p^{-1}(\rho^{-1}\gamma H)}F(e,\ q(i))) \ar@{=}[r] &
	\coprod_{i\in p^{-1}(r(\rho^{-1}b))}F(e,\ q(i))) \\
	F(e,\ q(\gamma^{-1}a) \ar[r] \ar[u] &
	F(\gamma^{-1}\rho,\ q(\gamma^{-1}a)) \ar@{=}[r] \ar[u] & 
	\gamma^{-1}\rho F(e,\ q(\rho^{-1}a)) \ar@{=}[r] \ar[u] & 
	F(e,\ q(\rho^{-1}a)) \ar[u]
} } \]
where the horizontal maps on the left apply $F(e \to \gamma^{-1}\rho,\ q(i))$, the middle square commutes by the definition of the top isomorphism, and the square on the right commutes because $\gamma^{-1}\rho$ acts trivially on all morphisms in $R(X)$. This gives $(i = \rho^{-1}a, z)$ as the last two coordinates of the answer, where $z \in F(e, \ q(\rho^{-1}a))$ is the image of $y$ along the bottom row.

The other route takes $(b,a,g,x)$ to a possibly different value of $(\rho,j,i,z)$. In this case $\rho$ is computed as a representative of the $L$-coset in the image of $(b,a) \in T * S$, but this is the same as $s(b)$, so we can take the same value for $\rho$. Then the point $(j,i) \in T * S$ is computed as the image of $(b,a)$ under $\rho^{-1}$, so we again get $(\rho^{-1}b,\rho^{-1}a)$, as above. Finally, $z$ in this case is the image of $x$ under the map
$$F(e,K)\xrightarrow{F(e\to g^{-1}\rho, K)} F(g^{-1}\rho, K)=F(e, q(\rho^{-1}a)).$$
%
This agrees with the previous definition of $z$ since the following diagram commutes.

\[ \resizebox{\textwidth}{!}{\xymatrix @R=2em @C=10em{
& x \in F(e,\ K)\ar@{=}[d]\ar[r]^-{F(e\to g^{-1}\rho, K)} \ar[lddd]_-{F(e\to g^{-1}\gamma, K)} & F(g^{-1}\rho, K)\ar@{=}[d]\\
& F(\rho^{-1}g, \ q(\rho^{-1}a))\ar@{=}[d] \ar[r]^-{F(\rho^{-1}g\to e, \ q(\rho^{-1}a))} & F(e, \ q(\rho^{-1}a))\ar@{=}[d] \ni z\\
& F(\gamma^{-1}g, \ q(\gamma^{-1} a))\ar[r]^-{F(\gamma^{-1}g\to \gamma^{-1}\rho, \ q(\gamma^{-1}a))} \ar[d]_-{F(\gamma^{-1}g\to e,\  q(\gamma^{-1}a))}&F(\gamma^{-1}\rho,\  q(\gamma^{-1}a))\\
F(g^{-1}\gamma, \ K) \ar@{=}[r] & F(e, \ q(\gamma^{-1}a)) \ni y \ar[ur]_-{\ \ \ \ \ \ \ \ F(e\to \gamma^{-1}\rho,\  q(\gamma^{-1}a))}&
}}\]
\end{proof}

\section{The fiber map of the map $\Sigma_G^\infty X_+\to \bA_G(X)$ }\label{fiber}

In \autoref{map} we have constructed a map $\Sigma_G^\infty X_+\to \bA_G(X)$ as a map of spectral Mackey functors ($\GB^\Wald$-modules). Considering them now as $G$-spectra, the genuine $G$-fixed points of both source and target satisfy tom Dieck splittings. By \cite[Thm 1.2.]{BD}, this map is compatible with the splitting on fixed points in the sense that there is a commuting diagram in the homotopy category 
\begin{equation}\label{compatibility}
\xymatrix{ (\Sigma_G^\infty X_+)^G \ar[r]^-{\simeq} \ar[d] &\prod_{(H) \leq G}  \Sigma^\infty (X^H_{hWH})_+ \ar[d]\\
 \bA_G(X)^G\ar[r]^-\simeq  & \prod_{(H) \leq G} \bA (X^H_{hWH})
 }\end{equation}
and a similar commuting diagram on the $H$-fixed points for each subgroup $H\leq G$. Each of the vertical maps on the right is the usual inclusion of stable homotopy into nonequivariant $A$-theory. We note that the model that we use for the fixed points of the suspension $G$-spectrum is precisely that from \cite{BD}, which we need for this compatibility to work.


Diagram \eqref{compatibility} implies that the fiber of the $(\Sigma_G^\infty X_+)^G \to \bA_G(X)^G$ splits over conjugacy classes of subgroups. We next apply the nonequivariant stable parametrized $h$-cobordism theorem to identify the terms in the splitting.
\begin{df}
	If $M$ is a compact smooth manifold, let $\mc H(M)$ denote the space of $h$-cobordisms on $M$. Let $\mc H^\infty(M)$ be the homotopy colimit of the spaces $\mc H(M \times I^n)$ along the stabilization maps from \cite{igusa}.\footnote{A new treatment of the definition of the stable $h$-cobordism space as an $(\infty,1)$ functor is given in \cite{pieper}. We revisit this definition and give a treatment of stable $h$-cobordism spaces for manifolds with corners in upcoming work with Goodwillie and Igusa on spaces of equivariant $h$-cobordisms.}
\end{df}

\begin{thm}[Stable parametrized $h$-cobordism theorem, \cite{waldhausennew}]\label{parametrized_h_cobordism_theorem}
	For a smooth compact manifold $M$, there is a natural weak equivalence between $\mc H^\infty(M)$ and the homotopy fiber of $\Omega^\infty\Sigma^\infty_+ M \to \Omega^\infty\bA(M)$.
\end{thm}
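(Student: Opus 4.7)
The plan is to realize both sides of the claimed equivalence as infinite loop spaces attached to categories and then produce a zig-zag of equivalences between them. On the algebraic side, $\mathrm{hofib}(\Omega^\infty\Sigma^\infty_+M\to\Omega^\infty\bA(M))$ is the zeroth space of the ``smooth Whitehead spectrum'' $\mathbf{Wh}^{\mathrm{Diff}}(M)$, which by the splitting of $\bA(M)$ is a summand of $\bA(M)$ itself. On the geometric side, the stabilizations used to define $\mathcal{H}^\infty(M)$ can be promoted to structure maps of a spectrum. The problem is to build a natural zig-zag between these spectra.

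The first reduction is from $\mathrm{Diff}$ to $\mathrm{PL}$ using smoothing theory: the Morlet/Burghelea--Lashof fibration comparing $\mathrm{Diff}$ and $\mathrm{PL}$ pseudo-isotopy theories matches, on the algebraic side, a corresponding comparison between smooth and PL Whitehead spectra, so it suffices to prove the theorem in the PL category. Once in $\mathrm{PL}$, I would introduce an intermediate ``polyhedral'' model: a simplicial category $\mathcal{E}(M)$ whose objects are compact PL polyhedra $K\supseteq M$ with $M\hookrightarrow K$ a PL deformation retract, morphisms being PL expansions over $M$, stabilized by crossing with an interval. A $s_\sbt$-style construction (paralleling Waldhausen's $S_\sbt$) together with group completion produces a spectrum $\mathbf{Wh}^{\mathrm{PL}}(M)$ whose zeroth space is $\mathcal{H}^{\infty,\mathrm{PL}}(M)$.

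Next come the two main comparisons, corresponding to the ``non-manifold'' and ``manifold'' parts of Waldhausen--Jahren--Rognes. For the \emph{non-manifold part}, the inclusion of the polyhedral category into the Waldhausen category $\Rhf(M)$ of homotopy finite retractive CW-complexes over $M$ is shown to induce an equivalence on $K$-theory onto the Whitehead summand $\mathrm{hofib}(\Sigma^\infty_+M\to\bA(M))$; this uses Waldhausen's approximation and additivity theorems, together with the identification of $\Sigma^\infty_+M$ with the $K$-theory of the subcategory of discrete retractive spaces (this is precisely the content of Section \ref{map} in the present paper). For the \emph{manifold part}, one shows the forgetful functor from PL-manifold expansions of $M$ to polyhedral expansions is an equivalence after stabilization; this is proved by handle-trading, transversality in families, and Hatcher's cancellation machinery, using the $s$-cobordism theorem as the base case in $\pi_0$.

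The main obstacle will be the manifold part of Step 3: it is the deep geometric input, and it requires a careful parametrized version of the standard handle arguments to upgrade the $s$-cobordism theorem from a $\pi_0$ statement to an equivalence of spaces (indeed spectra). Everything else is essentially formal category-theoretic manipulation in Waldhausen $K$-theory, together with the splitting $\bA(M)\simeq \Sigma^\infty_+M\times \mathbf{Wh}^{\mathrm{Diff}}(M)$ which is a consequence of the \cite{dww} ``excisive'' behavior of $\bA$ for manifolds. Assembling the zig-zag produces the natural equivalence $\mathcal{H}^\infty(M)\simeq \Omega^\infty\mathbf{Wh}^{\mathrm{Diff}}(M)\simeq \mathrm{hofib}(\Omega^\infty\Sigma^\infty_+M\to\Omega^\infty\bA(M))$ claimed in the theorem.
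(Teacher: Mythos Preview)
The paper does not prove this theorem at all: it is quoted as a known result from \cite{waldhausennew} and used as a black box in the proof of \autoref{fiber_identification}. So there is nothing to compare your attempt against here.

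That said, a couple of points in your sketch are off. First, you identify the ``non-manifold part'' of Waldhausen--Jahren--Rognes with the content of \autoref{map} of the present paper; that is a confusion. \autoref{map} constructs the \emph{equivariant} map $\Sigma_G^\infty X_+\to\bA_G(X)$ as a map of spectral Mackey functors, and the phrase ``non-manifold part'' in this paper's title refers to the equivariant analogue (identifying the fixed points of the fiber), not to the nonequivariant comparison between polyhedral and Waldhausen models that WJR carry out. Second, the splitting $\bA(M)\simeq\Sigma^\infty_+M\times\mathbf{Wh}^{\mathrm{Diff}}(M)$ is due to Waldhausen, not to \cite{dww}; the latter concerns the $A$-theory Euler characteristic and the index theorem, and in any case the splitting is not needed to identify the homotopy fiber in the statement. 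Finally, while your overall architecture (smoothing theory reduction, intermediate simple-map/polyhedral model, then the manifold and non-manifold comparisons) is broadly the shape of the WJR argument, the actual WJR ``non-manifold part'' is a theory of simple maps of finite simplicial sets rather than an application of the approximation and additivity theorems as you describe, and the ``manifold part'' is substantially deeper than a parametrized $s$-cobordism argument.
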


Note that $\bA(M)$ can be defined using either homotopy finite or homotopy finitely dominated spaces in this theorem, because the difference only changes $\pi_{-1}$ of the homotopy fiber, and is therefore invisible once we take infinite loop spaces.

We cannot apply \autoref{parametrized_h_cobordism_theorem} right away because the homotopy orbits $M^H_{hWH}$ are not a compact smooth manifold. However, it is standard to extend the definition of $\mc H^\infty$ to infinite CW complexes by writing them as a filtered homotopy colimit of $\mc H^\infty$ of the finite subcomplexes, thickened into compact manifolds. This can be performed so that \autoref{parametrized_h_cobordism_theorem} holds with $M$ replaced by any CW complex $X$. We will describe this in detail only in the case when $X = M^H_{hWH}$; in this case we conclude in \autoref{h_infinity_colimit} that we can form  the filtered colimit using representation discs. We start with a definition and proposition necessary to make this construction explicit.


%
%

\begin{df}
Let $X$ be a $G$-space. Define $X_H$ to be the subspace of $X$ consisting of points with isotropy group exactly $H$:
$$X_H=\{x\in H \ |\  G_x=H\}= X^H\backslash \bigcup_{K>H} X^K.$$
If $X$ is a smooth compact $G$-manifold, instead of removing $X^K$, we remove an open tubular neighborhood of $X^K$. This produces a homotopy-equivalent version of $X_H$ that is a smooth compact $G$-manifold with corners.
\end{df}

\begin{rem}\label{colim}
In what follows we will be taking colimits over representations $V$, denoted $\underset{V} \colim$. These colimits are taken over inclusions of finite dimensional representations $V$ of $G$, inside a fixed complete $G$-universe. They contain a cofinal subsystem that is sequential (multiples of the regular representation), and the maps are closed inclusions, so these are also  homotopy colimits.
\end{rem}

\begin{prop}\label{colimit_prop}
Suppose $X$ is a $G$-space. Then for each subgroup $H$, the collapse maps $V \to *$  induce an equivalence 
$$\alpha\colon \underset{V}\colim(X\times V)_H \longrightarrow X^H$$
\end{prop}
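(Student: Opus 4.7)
The map $\alpha$ is induced by the projections $(X\times V)_H \to X$, and these factor through $X^H$: if $(x,v)$ has isotropy exactly $H$ then $H \subseteq G_x \cap G_v \subseteq G_x$, so $x \in X^H$. These projections are natural with respect to inclusions $V \hookrightarrow V'$ (equivariant embeddings preserve isotropy), so they descend to a map out of the filtered colimit.

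To prove $\alpha$ is a weak equivalence I would first analyze the fibers of each $\pi_V\colon (X\times V)_H \to X^H$. For $x \in X^H$ with $G_x = L \supseteq H$, the fiber is
\[
F_L(V) \;=\; V^H \setminus \bigcup_{H < J \leq L} V^J,
\]
the complement in the finite-dimensional real vector space $V^H$ of finitely many \emph{proper} linear subspaces, properness following from the completeness of the ambient $G$-universe (so $V^J \subsetneq V^H$ whenever $H \subsetneq J$). The decisive input is that as $V$ grows, the codimension $\textup{codim}_{V^H}(V^J)$ grows without bound for every $J \supsetneq H$. A standard general-position computation -- the complement in $V^H$ of finitely many linear subspaces, each of codimension at least $c$, is $(c-2)$-connected -- then implies that $\underset{V}{\colim}\, F_L(V)$ is weakly contractible.

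To promote this fiber analysis to the global statement, I would check that $\alpha$ induces a bijection on $\pi_n$ for every $n$. Any representing map $f\colon S^n \to X^H$ has compact image meeting only finitely many orbit types $L_1,\dots,L_m \supseteq H$. Enlarging $V$ in the complete universe so that each $F_{L_i}(V)$ is $(n+1)$-connected, I would build a lift $\tilde f\colon S^n \to (X\times V)_H$ by a cell-by-cell obstruction argument on a CW structure of $S^n$ refining the pullback stratification $f^{-1}(X_{L_i})$; the same argument applied to a homotopy between two lifts yields injectivity on $\pi_n$. Since $X^H$ has the homotopy type of a CW complex and homotopy groups commute with the filtered colimit, this suffices.

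\textbf{Main obstacle.} The cell-by-cell extension is delicate because the allowable fiber depends on the stratum the cell maps into: on a cell $D^k$ one must simultaneously avoid the union $\bigcup_{J} V^J$ over all subgroups $J \supsetneq H$ that occur as isotropies of points in the image of that cell. The completeness of the universe ensures each such $V^J$ has arbitrarily large codimension in $V^H$, so general position makes the extension possible once $V$ is chosen large enough relative to $k$; carrying this out rigorously is the technical heart of the argument. A cleaner variant, which I would consider if the direct approach becomes cumbersome, is to verify that the projections $(X\times V)_H \to X^H$ assemble into a quasi-fibration in the colimit, whence contractible fibers give an equivalence directly via the homotopy long exact sequence.
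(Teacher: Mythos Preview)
Your intuition—that the subspaces $V^J\subsetneq V^H$ have codimension tending to infinity—is exactly right, but the paper exploits it far more directly and avoids the obstruction-theoretic difficulties you flag. Rather than analyzing the connectivity of the fiber complements and lifting cell by cell, the paper simply writes down an explicit diagonal filler for each square
\[
\xymatrix{
S^{n-1}\ar[r]^-{\gamma}\ar[d] & \underset{V}\colim(X\times V)_H\ar[d]^-{\alpha}\\
D^n\ar[r]^-{\beta} & X^H.
}
\]
By compactness, $\gamma$ factors through some finite stage $V$. The key observation is that in the regular representation $\rho$ the vector $w=\sum_{h\in H}\chi_h$ has isotropy \emph{exactly} $H$. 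Hence for any $x\in X^H$ and any $v\in V$, the point $(x,v,tw)\in X\times V\times\rho$ has isotropy exactly $H$ as soon as $t>0$. One then defines $\tilde\gamma\colon D^n\to (X\times(V\oplus\rho))_H$ with $X$-coordinate $\beta$, and $(V\oplus\rho)$-coordinate given by a straight-line homotopy from $(p_2\gamma(y_0),0)$ on the boundary to $(0,w)$ at the center. This works uniformly in $n$, requires no CW hypothesis on $X$, and never looks at the isotropy stratification of $X^H$.

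By contrast, your route has a real gap at the step you yourself isolate. The projection $(X\times V)_H\to X^H$ is not a fibration, and for a general $G$-space $X$ there is no reason the pullback of the isotropy stratification to $S^n$ admits a compatible CW structure; your assertion that ``$X^H$ has the homotopy type of a CW complex'' is neither assumed nor needed. The quasi-fibration alternative you mention could be made to work, but it is considerably more effort than the two-line construction above. The moral: once you see that a single copy of $\rho$ already contains a point of isotropy exactly $H$, you get a global section $X^H\to (X\times\rho)_H$, and the whole connectivity analysis becomes unnecessary.
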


Intuitively, \autoref{colimit_prop} is true because as we multiply $X$ by higher and  higher-dimensional representation discs, the codimension of the subspaces $X^K \subseteq X^H$ goes to infinity.

\begin{proof}
We show that the relative homotopy groups $\pi_n\big(X^H, \underset{V}\colim(X\times V)_H\big)$ are trivial. Suppose that we have a diagram
$$\xymatrix @C=3em @R=3em{
S^{n-1} \ar[r]^-\gamma \ar@{^{(}->}[d]_-i  & \underset{V}\colim(X\times V)_H \ar[d]^-\alpha \\
D^n \ar[r]^-\beta \ar@{-->}[ru]^-{\tilde{\gamma}?} & X^H
}$$
Since $S^{n-1}$ is compact, $\gamma$ factors through some stage $V$ of the colimit,
$$\gamma\colon S^{n-1} \longrightarrow  (X\times V)_H.$$ Let $\rho$ be the regular representation of $G$.  Denote by $p_1$ and $p_2$ the projections to the $X$ and the $V$ components.
We will define a lift $\tilde{\gamma}: D^n\to (X\times (V\oplus \rho))_H.$

Let $y_t$ be a point in $D^n$, where $t$ indicates its position on the radius of $D^n$ parametrized by the unit interval, so that $y_0$ is on the boundary $S^{n-1}$ and  $y_1$ is the center of $D^n$.

Define the $X$-component $p_1 \circ \tilde{\gamma}(y_t)$ to be $\beta(y_t)\in X^H$ for any $y_t\in D^n$. Now we define the $(V\oplus \rho)$-component $p_1 \circ \tilde{\gamma}(y_t)$ so that  when $t > 0$ it lands in $(V\oplus \rho)_H$, namely so that it is $H$-fixed but not $K$-fixed for any $K>H$.   Let $\{\chi_g\}$ be a basis of the regular representation $\rho$ of $G$, and note that $ \sum\limits_{h\in H} \chi_h$ has isotropy exactly $H$. We define the map
$$\tilde{\gamma} \colon D^n\longrightarrow V\oplus \rho$$ by

\[  y_t \mapsto \begin{cases}
\big((p_2 \circ \gamma)(y_0), \ 2t \sum\limits_{h\in H} \chi_h\big)&\text{   for }t\leq \frac{1}{2} \\
\big((2-2t)(p_2 \circ \gamma)(y_0),\  \sum\limits_{h\in H} \chi_h\big)&\text{   for }t\geq \frac{1}{2}
\end{cases} \]
This assignment is easily seen to be continuous. When $t = 0$ it lands in $ (X\times (V\oplus \rho))_H$ because $\gamma$ does. When $t > 0$ it lands in $ (X\times (V\oplus \rho))_H$ because the second coordinate is in $(V\oplus \rho)_H$.
\end{proof}

Now suppose that $M$ is a smooth compact manifold. We will use \autoref{colimit_prop} to express $M^H_{hWH}$ as a colimit of compact submanifolds.
\begin{cor}\label{h_infinity_colimit} Let $M$ be a smooth compact $G$-manifold, and let $H$ be a subgroup of $G$ and $WH$ the Weyl group $NH/H$. Then there is an equivalence
	$$\mc H^\infty(M^H_{hWH}) \simeq \underset{V}\colim \mc H^\infty(M_V),$$ where $M_V=(M\times D(V))_H)/WH$ are compact smooth manifolds.
\end{cor}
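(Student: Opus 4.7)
The plan is to exhibit $M^H_{hWH}$ as a filtered homotopy colimit of the compact smooth manifolds $M_V$, and then invoke the extension of $\mc H^\infty$ to CW complexes as a filtered homotopy colimit of $\mc H^\infty$ on compact manifold thickenings, as described in the remark after \autoref{parametrized_h_cobordism_theorem}.

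First I would apply \autoref{colimit_prop} with $X = M$ to obtain an equivalence $\underset{V}\colim(M \times V)_H \simeq M^H$. Since $D(V)$ is $G$-equivariantly homotopy equivalent to $V$ via the radial contraction, and these equivalences are compatible with the inclusions $V \hookrightarrow V \oplus V'$, replacing $V$ by $D(V)$ gives an equivalence $\underset{V}\colim(M \times D(V))_H \simeq M^H$. By \autoref{colim}, these colimits are also homotopy colimits since the transition maps are closed inclusions of $G$-CW pairs.

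Next I would take $WH$-homotopy orbits. The key point is that $WH = NH/H$ acts freely on $(M \times D(V))_H$: if $n \in NH$ fixes a point of isotropy exactly $H$, then $n \in H$, so $nH = H$ in $WH$. Consequently the orbit space equals the homotopy orbit space up to equivalence, and $M_V = (M \times D(V))_H / WH$ is a compact smooth manifold with corners (after the convention of removing open tubular neighborhoods of higher isotropy strata, as in the definition of $X_H$ for manifolds). Since filtered homotopy colimits commute with homotopy orbits, we get
\[ \underset{V}\colim M_V \ \simeq\ \underset{V}\colim \bigl((M \times D(V))_H\bigr)_{hWH} \ \simeq\ \bigl(\underset{V}\colim (M \times D(V))_H\bigr)_{hWH} \ \simeq\ M^H_{hWH}. \]

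Finally, by the extension of $\mc H^\infty$ to CW complexes as a filtered homotopy colimit of $\mc H^\infty$ on compact manifold thickenings, and by the stable parametrized $h$-cobordism theorem (\autoref{parametrized_h_cobordism_theorem}) which identifies $\mc H^\infty$ with the fiber of a natural map between functors that each commute with filtered homotopy colimits, we conclude
\[ \mc H^\infty(M^H_{hWH}) \ \simeq\ \mc H^\infty\bigl(\underset{V}\colim M_V\bigr) \ \simeq\ \underset{V}\colim \mc H^\infty(M_V). \]
The main subtlety is ensuring that the $M_V$ form a legitimate cofinal thickening system for $M^H_{hWH}$ in the sense used to define $\mc H^\infty$ on non-manifold CW complexes; this amounts to verifying that each $M_V$ is a compact smooth manifold with corners and that the inclusions $M_V \hookrightarrow M_{V \oplus V'}$ are suitably nice, both of which follow from standard equivariant transversality applied to the stratification of $M \times D(V)$ by isotropy type.
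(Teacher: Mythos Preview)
Your proof is correct and follows essentially the same route as the paper: apply \autoref{colimit_prop} (with $V$ replaced by $D(V)$), pass to $WH$-homotopy orbits using freeness of the $WH$-action to identify homotopy orbits with strict orbits, commute the filtered colimit past homotopy orbits, and then invoke the extension of $\mc H^\infty$ to CW complexes via compact thickenings. If anything, you spell out a few points (freeness of the $WH$-action, the final $\mc H^\infty$ step) more explicitly than the paper does.
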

\begin{proof}
Since $V\simeq D(V)$ is a $G$-equivariant equivalence, by \autoref{colimit_prop}, there is an equivalence 
$$\alpha\colon \underset{V}\colim(M\times D(V))_H \longrightarrow M^H,$$ which is $WH$-equivariant. Thus it induces equivalences
\begin{eqnarray*}
M^H_{hWH} & \simeq & \big(\underset{V}\colim (M\times D(V))_H \big)_{hWH}\\
&\simeq  & \underset{V}\colim \big( ((M\times D(V))_H)_{hWH}\big) \\
&\simeq & \underset{V}\colim \big( ((M\times D(V))_H)/WH\big)
\end{eqnarray*}
Lastly, note that the manifolds $M_V=(M\times D(V))_H)/WH$ are compact when $M$ is so.
\end{proof}

%
%
%
%
%
%

It remains to describe the behavior of $A$-theory on filtered colimits. We say that a filtered diagram of closed inclusions of spaces $\{X_\alpha\}$, with colimit $X = \bigcup_\alpha X_\alpha$, is \emph{strongly filtered} if any map from a finite CW complex $K$ to $X$ factors through some $X_\alpha$. In particular, sequential colimits along closed inclusions are strongly filtered.

We start by proving the desired commutation with colimits for the category of strictly finite relative cell-complexes, which we denote by $R^f(X)$.
\begin{prop}\label{prop1} 
If $\{X_\alpha\}$ is a strongly filtered system of spaces and $X =  \colim X_\alpha$, then
$$\underset{\alpha}\colim R^f(X_\alpha) \longrightarrow R^f(X)$$
is an equivalence of categories, where $R^f$ denotes the category of retractive spaces that are isomorphic to finite cell complexes relative to the base.
\end{prop}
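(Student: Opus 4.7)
The plan is to check the two defining properties of an equivalence of categories: essential surjectivity and full faithfulness. Both reduce, via the definition of a filtered colimit of categories, to compactness arguments that exploit the hypothesis that $\{X_\alpha\}$ is strongly filtered together with the fact that a finite relative cell complex is built from finitely many attaching maps whose domains are spheres (hence finite CW complexes).

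For essential surjectivity, I would argue by induction on the number of cells. Fix $Y \in R^f(X)$ with a finite relative CW structure $X = Y^{(-1)} \subseteq Y^{(0)} \subseteq \ldots \subseteq Y^{(n)} = Y$. Inductively assume that $Y^{(k-1)}$ has been realized as a pushout $X \cup_{X_\alpha} Y^{(k-1)}_\alpha$ for some $\alpha$ and some $Y^{(k-1)}_\alpha \in R^f(X_\alpha)$. Since the attaching map $\phi\colon S^{d-1} \to Y^{(k-1)}$ for the next cell has compact domain, and since the system $\{X_\beta \cup_{X_\alpha} Y^{(k-1)}_\alpha\}_{\beta \geq \alpha}$ is again strongly filtered with colimit $Y^{(k-1)}$ (each term being built from $X_\beta$ by attaching the same finite set of cells), the map $\phi$ factors through some $Y^{(k-1)}_\beta$. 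Attaching the cell via that factorization produces $Y^{(k)}_\beta \in R^f(X_\beta)$ whose pushout with $X$ over $X_\beta$ recovers $Y^{(k)}$. Iterating yields the required preimage of $Y$.

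For full faithfulness, I would fix $Y_\alpha, Y'_\alpha \in R^f(X_\alpha)$ (after enlarging $\alpha$ we may take both from the same stage) and analyze morphisms under and over the base. Any morphism $f\colon X \cup_{X_\alpha} Y_\alpha \to X \cup_{X_\alpha} Y'_\alpha$ in $R^f(X)$ is the identity on $X$, so it is determined by its restriction to each of the finitely many characteristic maps $D^{d_i} \to Y$. Each such restriction has compact domain, so by the strong filteredness applied as above it factors through $X \cup_{X_\beta} Y'_\beta$ for some $\beta \geq \alpha$ (possibly enlarging $\beta$ finitely many times). Packaging these factorizations gives a morphism $Y_\beta \to Y'_\beta$ in $R^f(X_\beta)$ whose image in $R^f(X)$ is $f$, giving fullness. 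For faithfulness, two parallel morphisms $Y_\alpha \rightrightarrows Y'_\alpha$ whose images in $R^f(X)$ agree must agree on each characteristic map after passing to some further $X_\gamma$, again by the compact factorization argument, so they already agree in $R^f(X_\gamma)$ and hence represent the same morphism in the colimit category.

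The main technical point — and the one step worth being careful about — is the passage from the hypothesis that $\{X_\alpha\}$ is strongly filtered to the conclusion that an auxiliary system such as $\{X_\beta \cup_{X_\alpha} Y^{(k-1)}_\alpha\}_{\beta \geq \alpha}$ is also strongly filtered, so that compact maps into its colimit factor through finite stages. This follows because attaching a fixed finite collection of cells to every $X_\beta$ preserves closedness of the structure maps and preserves the property that a compact subset lies in some $X_\beta$ together with the (finitely many) attached cells. With this lemma in hand, both the essential surjectivity and the full faithfulness arguments go through as described.
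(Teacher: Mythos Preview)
Your argument is correct, but the paper's proof is considerably more direct and avoids both the induction on cells and the auxiliary lemma about preserving strong filteredness. The key observation you are missing is that a retractive space $Y$ over $X$ comes equipped with a projection $p\colon Y\to X$, and you can use this to push the compactness argument down into $X$ in one step. Concretely, $\overline{Y\setminus X}$ is covered by the finitely many closed cells, and composing each characteristic map $D^{d_i}\to Y$ with $p$ gives a map from a finite CW complex into $X$, which factors through some $X_{\alpha_i}$ by hypothesis. Taking a common upper bound $\alpha$, the whole of $p(\overline{Y\setminus X})$ lies in $X_\alpha$, and then $Y$ is literally the pushout of $X$ and $Y\setminus(X\setminus X_\alpha)$ along $X_\alpha$, the latter being a finite relative cell complex over $X_\alpha$. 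No induction is needed, and the auxiliary systems $\{X_\beta\cup_{X_\alpha} Y^{(k-1)}_\alpha\}$ never enter.

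The same trick streamlines full faithfulness. For faithfulness in particular, your phrasing suggests one might need to pass to a further $X_\gamma$, but in fact the functor $R^f(X_\alpha)\to R^f(X)$ is already faithful on the nose: the pushout $Y_\alpha\mapsto X\cup_{X_\alpha}Y_\alpha$ is injective on underlying sets (the structure maps are closed inclusions), so a morphism over $X_\alpha$ is recovered from its image over $X$ by simply deleting $X\setminus X_\alpha$. Your approach works, but it is worth internalizing the shortcut of exploiting the retraction $p$ rather than tracking cells one at a time.
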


\begin{proof}
	Let $Y$ be a retractive space over $X$ such that $X \to Y$ is a finite relative cell complex. The closure $\overline{Y\setminus X}$ is covered by the union of finitely many closed cells, each of which has projection landing in some $X_\alpha$, so the entire closure has projection landing in some $X_\alpha$. Then $Y$ is the pushout in spaces of $X$ and $Y \setminus (X \setminus X_\alpha)$ along $X_\alpha$, and therefore $Y$ comes from $R^f(X_\alpha)$. Therefore the map from $\underset{\alpha}\colim R^f(X_\alpha)$ is essentially surjective. A similar argument shows it is also full, and it is faithful because a morphism of spaces over $X_\alpha$ can be recovered from its pushout to $X$ simply by deleting $X \setminus X_\alpha$.
%
\end{proof}

\begin{prop}\label{prop2}
If $\sC = \colim \sC_\alpha$ is a filtered colimit of Waldhausen categories, then the canonical map
$$\hocolim K(\sC_\alpha) \longrightarrow K(\sC)$$
is an equivalence of spectra.
\end{prop}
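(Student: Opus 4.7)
The plan is to verify that Waldhausen's $S_\bullet$-construction commutes with filtered colimits at every simplicial level, and then to invoke the standard fact that filtered colimits of simplicial sets compute their own homotopy colimits. This is entirely formal once the book-keeping is set up.

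First I would unpack the filtered colimit $\sC = \colim_\alpha \sC_\alpha$ in the category of Waldhausen categories: its underlying category is the filtered colimit of the underlying categories, its cofibrations are the morphisms represented by a cofibration at some stage, and similarly for weak equivalences. Because filtered colimits in $\Cat$ commute with finite limits and with finite colimits, the Waldhausen axioms (existence of pushouts along cofibrations, gluing lemma, etc.) for $\sC$ follow from the corresponding axioms for the $\sC_\alpha$'s, and every such pushout in $\sC$ can be computed inside some $\sC_\alpha$ for $\alpha$ sufficiently large.

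Next, for each $n \geq 0$, an object of $S_n\sC$ is a diagram of $n$ composable cofibrations together with choices of all subquotients, and a morphism between two such objects is similarly finite data. Combined with the previous paragraph, this gives an isomorphism of categories
\[ \colim_\alpha S_n \sC_\alpha \;\xrightarrow{\cong}\; S_n \sC, \]
which restricts to an isomorphism on the subcategories of weak equivalences $wS_n(-)$. Since nerves are built out of finite products of hom-sets, and finite products commute with filtered colimits in $\textup{Set}$, we get an isomorphism of bisimplicial sets $\colim_\alpha N_\bullet wS_\bullet \sC_\alpha \cong N_\bullet wS_\bullet \sC$, and the same argument applies levelwise to the iterated $S_\bullet$-constructions that assemble into the spectrum $K(-)$.

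Finally, I would use that geometric realization commutes with all colimits and that, for a filtered diagram of simplicial sets, the colimit is automatically a homotopy colimit (since $\pi_n$ commutes with filtered colimits). Applied levelwise to the symmetric spectrum $K(\sC_\alpha)$, this yields the desired equivalence $\hocolim_\alpha K(\sC_\alpha) \xrightarrow{\simeq} K(\sC)$. The only real subtlety — and hence the main (mild) obstacle — is verifying the first paragraph: that the filtered colimit of Waldhausen categories really is a Waldhausen category whose structure is computed stagewise. Once that is in hand, every subsequent step is a formal consequence of the fact that $S_\bullet$ and $wS_\bullet$ are defined by finite diagrammatic data.
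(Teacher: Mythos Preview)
Your proposal is correct and follows essentially the same approach as the paper: the paper's proof is a terse sketch that invokes exactly the three ingredients you spell out (filtered colimits commute with finite products, filtered colimits of simplicial sets are homotopy colimits, and homotopy colimits of spectra are computed levelwise), together with the observation that a filtered colimit of Waldhausen categories is again Waldhausen. Your write-up simply unpacks these steps in more detail, including the point that $S_n$ and $wS_n$ are given by finite diagrammatic data and hence commute with filtered colimits.
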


\begin{proof}
	First note that a filtered colimit of Waldhausen categories is again Waldhausen. The proof is then is another exercise in filtered colimits, using the fact that they commute with finite products, that a filtered colimit of simplicial sets is always a homotopy colimit, and that homotopy colimits of spectra are computed by taking the homotopy colimit of each spectrum level. See also \cite[page 20, (9)]{quillen1} for the analogous result for exact categories.
\end{proof}

\begin{cor}\label{prop3}
	If $\{X_\alpha\}$ is a strongly filtered system of spaces and $X =  \colim X_\alpha$, then
	$$\underset{\alpha}\hocolim K(\Rhf(X_\alpha)) \longrightarrow K(\Rhf(X))$$
	is an equivalence of spectra.
\end{cor}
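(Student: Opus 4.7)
The plan is to reduce \autoref{prop3} to the analogous statement for strictly finite relative cell complexes (\autoref{prop1}) and then apply \autoref{prop2} to commute $K$-theory past the filtered colimit. The key auxiliary claim is that the inclusion of Waldhausen categories $R^f(X) \hookrightarrow \Rhf(X)$ induces an equivalence on Waldhausen $K$-theory, naturally in $X$. This should follow from Waldhausen's approximation theorem: the inclusion reflects weak equivalences, and given a finite cell complex $Y \in R^f(X)$ and a morphism $Y \to Z$ in $\Rhf(X)$, one may choose by the definition of $\Rhf$ a homotopy equivalence $Z' \xrightarrow{\sim} Z$ with $Z' \in R^f(X)$, compose to get a map $Y \to Z'$ in $R^f(X)$, and then factor that map as a cofibration followed by a weak equivalence using the mapping cylinder (which stays inside $R^f$ since the pushout of finite cell complexes along a cofibration is again a finite cell complex). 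This produces the factorization $Y \hookrightarrow Y' \xrightarrow{\sim} Z$ with $Y' \in R^f(X)$ required by the approximation property.

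Given this auxiliary claim, the proposition follows from the commuting square
$$\xymatrix{
\hocolim_\alpha K(R^f(X_\alpha)) \ar[d]^-\sim \ar[r] & K(R^f(X)) \ar[d]^-\sim \\
\hocolim_\alpha K(\Rhf(X_\alpha)) \ar[r] & K(\Rhf(X)).
}$$
The vertical maps are equivalences by the auxiliary claim applied at each $X_\alpha$ and at $X$. The top horizontal arrow factors as
$$\hocolim_\alpha K(R^f(X_\alpha)) \xrightarrow{\sim} K\!\left(\underset{\alpha}\colim R^f(X_\alpha)\right) \xrightarrow{\sim} K(R^f(X)),$$
where the first equivalence is \autoref{prop2} for the filtered system of Waldhausen categories $\alpha \mapsto R^f(X_\alpha)$, and the second is obtained by applying $K$ to the equivalence of categories from \autoref{prop1}. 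Two-out-of-three in the square then forces the bottom horizontal arrow to be an equivalence, which is the content of \autoref{prop3}.

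The main obstacle is verifying the hypotheses of the approximation theorem. One must equip $\Rhf(X)$ with a cylinder functor satisfying the cylinder axiom and check that the inclusion $R^f(X) \hookrightarrow \Rhf(X)$ preserves it; the standard mapping cylinder in retractive spaces works, but one must verify that it stays within each subcategory. Secondly, to rigorously use the commuting square one needs the auxiliary equivalence $K(R^f(-)) \xrightarrow{\sim} K(\Rhf(-))$ to be natural with respect to strongly filtered systems, which is automatic from the functoriality set up in \autoref{functoriality_definition} since the inclusion is defined levelwise. Neither ingredient is deep, but they should be spelled out to ensure the filtered colimit is compatible with the approximation equivalence.
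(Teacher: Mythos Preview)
Your overall architecture matches the paper's proof exactly: reduce to $R^f$ via Waldhausen's approximation theorem, use \autoref{prop1} to identify $\colim_\alpha R^f(X_\alpha)$ with $R^f(X)$, use \autoref{prop2} to commute $K$ past the filtered colimit, and finish by two-out-of-three in the square you wrote down. The paper's proof is in fact terser than yours; it simply asserts that the inclusion $R^f(X)\to\Rhf(X)$ satisfies the approximation theorem and states the required factorization property.

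One small slip: in your verification of the approximation property you write ``choose a homotopy equivalence $Z'\xrightarrow{\sim} Z$ with $Z'\in R^f(X)$, compose to get a map $Y\to Z'$.'' These arrows do not compose; you have $Y\to Z\leftarrow Z'$, not $Y\to Z\to Z'$. You could lift $Y\to Z$ through the equivalence $Z'\to Z$, but the lift only commutes up to homotopy, and the approximation theorem demands a strict factorization of the original map. The cleaner (and standard) argument, which is what the paper invokes, is to factor $Y\to Z$ directly: attach finitely many cells to $Y$ to obtain $Y'\in R^f(X)$ together with a weak equivalence $Y'\overset{\sim}{\to} Z$ whose composite with $Y\hookrightarrow Y'$ is literally the original map. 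This is possible because $Z$ is homotopy finite, so only finitely many cells are needed to make the relative homotopy groups vanish. Once you replace your ``compose'' step with this direct factorization, your proof is complete and agrees with the paper's.
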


Recall that $\Rhf(X)$ are homotopy finite relative complexes, i.e. relative cell complexes which are homotopy equivalent to finite ones. 

\begin{proof}
By \autoref{prop1} and \autoref{prop2}, the corresponding map on $K$-theory of finite complexes is an equivalence:
$$\underset{\alpha}\hocolim K(R^f(X_\alpha)) \simar K(R^f(X)).$$
The corollary follows from the standard fact that the inclusion $R^f(X)\to \Rhf(X)$ satisfies Waldhausen's approximation theorem. In particular, any map $Y \to Z$ of spaces that are finite complexes relative to $X$, can be factored into a relative finite complex $Y \to Y'$ followed by a weak equivalence $Y' \overset\sim\to Z$.
\end{proof}


We now have all the ingredients we need to prove an equivariant stable parametrized $h$-cobordism theorem.

\begin{df}\label{HGdef}
	Let $M$ be a compact smooth $G$-manifold. Define $\textbf{H}_G(M)$ as the fiber of the map  $\Sigma_G^\infty M_+\to \bA_G(M)$  constructed in \autoref{map}.
\end{df}

\begin{thm}\label{fiber_identification}
	Let $M$ be a compact smooth $G$-manifold. On the infinite loop space level, there is an equivalence
	$$\Omega^\infty  \textbf{H}_G(M)^G\simeq \prod_{(H) \leq G} \mc H^\infty(M^H_{hWH}),$$
	where $\mc H^\infty(X)$ is the stable $h$-cobordism space of $X$. An analogous result holds for $H$-fixed points for subgroups $H$ of $G$.
\end{thm}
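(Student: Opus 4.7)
The plan is to combine three inputs: (i) the tom Dieck style splitting of the fiber on $G$-fixed points that follows from diagram \eqref{compatibility}; (ii) the nonequivariant stable parametrized $h$-cobordism theorem \autoref{parametrized_h_cobordism_theorem}; and (iii) \autoref{h_infinity_colimit} together with the filtered colimit result \autoref{prop3}, which lets us transport the $h$-cobordism theorem from compact smooth manifolds to the non-manifold space $M^H_{hWH}$.

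First I would use diagram \eqref{compatibility}, which expresses the map $\Sigma_G^\infty M_+ \to \bA_G(M)$ on $G$-fixed points as compatible with the tom Dieck splittings on source and target, to deduce that taking homotopy fibers and passing to $\Omega^\infty$ yields
\[
\Omega^\infty \textbf{H}_G(M)^G \;\simeq\; \prod_{(H)\leq G} \mathrm{hofib}\bigl(\Omega^\infty \Sigma^\infty_+ (M^H_{hWH}) \to \Omega^\infty \bA(M^H_{hWH})\bigr).
\]
(The product of homotopy fibers equals the homotopy fiber of a product of maps.) This reduces the theorem to identifying, for each conjugacy class $(H)$, the factor on the right with $\mc H^\infty(M^H_{hWH})$.

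Next I would handle one such factor. The base space $M^H_{hWH}$ is not a smooth compact manifold, so \autoref{parametrized_h_cobordism_theorem} does not apply directly. Instead, by \autoref{h_infinity_colimit}, we may write
\[
\mc H^\infty(M^H_{hWH}) \;\simeq\; \underset{V}\colim \, \mc H^\infty(M_V),
\]
where $M_V = (M\times D(V))_H / WH$ is a smooth compact manifold (possibly with corners) and the colimit runs over representations $V$ inside a complete $G$-universe. For each $V$, the nonequivariant stable parametrized $h$-cobordism theorem gives
\[
\mc H^\infty(M_V) \;\simeq\; \mathrm{hofib}\bigl(\Omega^\infty \Sigma^\infty_+ M_V \to \Omega^\infty \bA(M_V)\bigr).
\]
The issue will then be to show that these equivalences assemble in $V$ and that $\Sigma^\infty_+$ and $\bA$ commute with the relevant filtered colimits. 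For $\bA$, I would combine \autoref{prop3} (with the colimit $M^H_{hWH} = \colim_V M_V$, which is strongly filtered by \autoref{colim}) with the fact that the definition of $\bA$ via $\Rhf$ or via $R^{hfd}$ gives the same answer after $\Omega^\infty$. For $\Sigma^\infty_+$, the commutation with filtered homotopy colimits of spaces is standard. Finally, since sequential homotopy colimits commute with homotopy fibers of maps of spectra, we obtain
\[
\mathrm{hofib}\bigl(\Omega^\infty \Sigma^\infty_+ M^H_{hWH} \to \Omega^\infty \bA(M^H_{hWH})\bigr) \;\simeq\; \underset{V}\hocolim\,\mc H^\infty(M_V) \;\simeq\; \mc H^\infty(M^H_{hWH}),
\]
which finishes the $G$-fixed point case. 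For a general subgroup $H \leq G$, the analogous compatibility of \eqref{compatibility} on $H$-fixed points (again provided by \cite{BD}) reduces the statement to the identical argument with $G$ replaced by $H$ and $(K)\leq H$ in the indexing.

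I expect the main obstacle to be verifying cleanly that taking homotopy fibers commutes with the filtered homotopy colimit over $V$. This is harmless for connective spectra, and indeed the relevant maps $\Sigma^\infty_+ M_V \to \bA(M_V)$ are split (via the Dennis trace-like inclusion) so their fibers are connective; but one needs to spell out that the $\pi_{-1}$ discrepancy between $\Rhf$ and $R^{hfd}$ does not obstruct the identification after $\Omega^\infty$. Once that point is handled, the rest of the argument is an assembly of already-established equivalences.
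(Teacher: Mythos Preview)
Your proposal is correct and follows essentially the same route as the paper: split on fixed points via \eqref{compatibility}, then for each factor express $M^H_{hWH}$ as a filtered colimit of the compact manifolds $M_V$ using \autoref{h_infinity_colimit} and \autoref{prop3}, apply \autoref{parametrized_h_cobordism_theorem} levelwise, and pass to the colimit. The only difference worth noting is that the paper sidesteps your ``main obstacle'' entirely by staying at the spectrum level until the very end: it takes the fiber of $\Sigma^\infty_+ M^H_{hWH} \to \bA(M^H_{hWH})$ in spectra, commutes that spectrum-level fiber with the filtered homotopy colimit over $V$ (automatic in a stable category), and only then applies $\Omega^\infty$ and commutes it with the filtered colimit; with this ordering no connectivity argument or $\pi_{-1}$ bookkeeping is needed.
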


\begin{proof}
Since fibers commute with fixed points and since the fixed points of the map $\Sigma_G^\infty M_+\to \bA_G(M)$ satisfy the compatibility from \autoref{compatibility}, $\textbf{H}_G(M)^G$ splits over conjugacy classes of subgroups $(H)$, and we can identify the $(H)$ component as the fiber of $\Sigma^\infty (M^H_{hWH})_+\to \bA(M^H_{hWH}).$ Moreover, since the infinite loop space functor commutes with products, we can identify $\Omega^\infty \textbf{H}_G(M)^G$ with the product of the infinite loop spaces of the fibers of the $(H)$ components of the map.

Recall \autoref{h_infinity_colimit} and \autoref{prop3}, we can express the map $\Sigma^\infty (M^H_{hWH})_+\to \bA(M^H_{hWH})$ as map of homotopy colimits over representations $V$
$$\hocolim \Sigma^\infty_+ M^H_{hWH} \longrightarrow \hocolim \bA(M^H_{hWH})$$ 
By \autoref{parametrized_h_cobordism_theorem},
since all $M^H_{hWH}$ are smooth  compact manifolds, and since homotopy fibers in spectra commute with homotopy colimits, we can identify  the fiber of this map with $\hocolim  \textbf{H}(M^H_{hWH})$, where $ \textbf{H}(M^H_{hWH})$ is the stable $h$-cobordism spectrum whose infinite loop space is the stable $h$-cobordism space $\mc H^\infty(M^H_{hWH})$. 

Therefore, we can conclude that 
    $$\Omega^\infty \textbf{H}_G(M)^G\simeq \prod_{(H) \leq G} \Omega^\infty \hocolim \textbf{H}(M^H_{hWH}),$$
    Commuting the infinite loop space functor $\Omega^\infty$ with the filtered homotopy colimit and applying \autoref{h_infinity_colimit}, 
    we get the desired equivalence 
  $$\Omega^\infty \textbf{H}_G(M)^G\simeq \prod_{(H) \leq G} \mc H^\infty(M^H_{hWH}).$$
\end{proof}

\begin{rem}
	This version of the theorem relates the categorically-constructed space $\Omega^\infty \textbf{H}_G(M)^G$ to a product of spaces of nonequivariant $h$-cobordisms.
	In upcoming joint work with Goodwillie and Igusa we define a single stable space of \emph{equivariant} $h$-cobordisms that splits into the same product, and using \autoref{fiber_identification} conclude that it is equivalent to $\Omega^\infty \textbf{H}_G(M)^G$.
\end{rem}

  \bibliographystyle{amsalpha}
  \bibliography{references}

\begingroup%
\setlength{\parskip}{\storeparskip}

\end{document}